\tikzset{>=angle 90}
\def\@tocline#1#2#3#4#5#6#7{
    \begingroup 
    \ifnum #1>\c@tocdepth 
  \else
    \@ifempty{#4}{}{}

    \parindent\z@ \leftskip#3\relax \advance\leftskip\@tempdima\relax
    #5\hskip-\@tempdima
      \ifcase #1
       \or\or \hskip 2em \or \hskip 1em \else \hskip 3em \fi%
      #6\nobreak\relax
    \dotfill\hbox to\@pnumwidth{\@tocpagenum{#7}}\par
    \nobreak
    \fi
    \endgroup
 }
 \def\l@section{\@tocline{1}{0pt}{1pc}{}{}}
\renewcommand{\tocsection}[3]{%
  \indentlabel{\@ifnotempty{#2}{\makebox[1.3em][l]{%
    \ignorespaces#1 \bfseries{#2}.\hfill}}}\bfseries{#3}
    \vspace{0pt}}
\renewcommand{\tocsubsection}[3]{%
  \indentlabel{\@ifnotempty{#2}{\hspace*{-0.5em}\makebox[2.1em][l]{%
    \ignorespaces#1#2.\hfill}}}#3
    \vspace{0pt}}
\newcommand\appendix@section[1]{%
  \refstepcounter{section}%
  \orig@section*{Appendix \@Alph\c@section. #1}%
}
\let\orig@section\section
\g@addto@macro\appendix{\let\section\appendix@section}
\newcommand{\PPerf}{\textup{Perf}}
\DeclareMathOperator{\MMod}{Mod}
\DeclareMathOperator{\QQCoh}{QCoh}
\newcommand{\cn}{\textup{cn}}
\DeclareMathOperator{\Equiv}{Equiv}
\DeclareMathOperator{\cofib}{cofib}
\newcommand{\cl}{\textup{cl}}
\DeclareMathOperator{\fib}{fib}
\DeclareMathOperator{\Poly}{Poly}
\DeclareMathOperator{\N}{N}
\newcommand{\SCRMod}{\textup{AR-Mod}}
\DeclareMathOperator{\CAlg}{CAlg}
\newcommand{\dSt}{\textup{dSt}}
\newcommand{\Shv}{\textup{Shv}}
\DeclareMathOperator{\dSch}{dSch}
\newcommand{\hSp}{\textup{hSp}}
\DeclareMathOperator{\Ch}{Ch}
\DeclareMathOperator{\Ani}{\textup{Ani}}
\newcommand{\AniAlg}[1]{\textup{AR}_{#1}}
\newcommand{\AniM}{\SCRMod^{\cn}}
\newcommand{\AniMod}[1]{\textup{Ani($#1$-Mod)}}
\DeclareSymbolFontAlphabet{\mathbb}{AMSb} 
\DeclareSymbolFontAlphabet{\mathbbl}{bbold}
\title{Notes on derived algebraic geometry}
\author{Can Yaylali}
\begin{document}

\begin{abstract}
	These are notes on derived algebraic geometry in the context of animated rings. More precisely, we recall the proof of To\"en-Vaqui\'e that the derived stack of perfect complexes is locally geometric in the language of $\infty$-categories. Along the way, we recall the necessary notions in derived commutative algebra and derived algebraic geometry. We also analyze the deformation theory and quasi-coherent modules over derived stacks.
\end{abstract}
\maketitle

\tableofcontents

\section*{Introduction}
Derived algebraic geometry is the study of algebraic geometry using homotopy theoretical methods. It was used by Arinkin-Gaitsgory \cite{ArinkinGaits} to formulate a geometric Langlands conjecture. T\"oen-Vaqui\'e used derived algebraic geometry to study moduli of dg-categories  in \cite{TVaq}. In \cite{AG} Antieau-Gepner studied derived versions of Brauer groups and Azumaya algebras. More recently, Annala constructed in \cite{Annala} a bivariant derived algebraic cobordism for quasi-smooth projective derived schemes. In \cite{yaylali}, the author defined derived versions of $F$-zips and used these to analyze families of proper smooth morphisms, which was previously not possible in this generality with classical $F$-zips.
\par  
A general theory using certain model categories (for example simplicial commutative rings, DG-rings and connective $E_{\infty}$-rings) was developed by To\"en-Vezzosi in \cite{TV2}. The idea is to study sheaves of $\infty$-groupoids on these model categories. They also look more closely into the 3 settings listed above. \par
	Before working with model categories, one could ask how to extend algebraic geometry to sheaves of rings that take values in $n$-groupoids, for $n\geq 2$. The idea of Simpson in \cite{Simpson}, was to extend the definition of an atlas. Namely, he first starts with $0$-geometric stacks, which are algebraic spaces. The $0$-geometric morphisms are morphisms that are representable by algebraic spaces. Then an $n$-geometric stack, is an \'etale sheaf of $(n+1)$-groupoids on rings together with a smooth $(n-1)$-geometric morphism, by a $(n-1)$-geometric stack. It turns out, that if we start sheaves with values in $\infty$-groupoids, then an $n$-geometric sheaf is automatically $(n+1)$-truncated. The definition of To\"en-Vezzosi of $n$-geometric stacks replaces the source category by a ``nice'' model category with a suitable topology (where ``nice'' is in their sense, for example simplicial commutative rings with the \'etale topology).\par
The case of simplicial commutative rings is also treated by Lurie in his PhD-thesis \cite{DAG} in the language of $\infty$-categories. The benefit of working with $\infty$-categories is  the fact, that we do not need to specify a model structure on animated rings (the $\infty$-categories of simplicial commutative rings\footnote{The term ``simplicial commutative ring'' in the $\infty$-categorical language was replaced by \v{C}esnavi\v{c}ius-Scholze in \cite{CS} by the term ``animated ring''. This makes in particular clear, that one works with objects up to homotopy.}). Besides this setting, there is a lot of work by Lurie on spectral algebraic geometry, i.e. algebraic geometry using $E_{\infty}$-rings (see \cite{SAG}). Even though in positive characteristic there is no equivalence between animated rings and $E_{\infty}$-rings, morally, geometry in both settings should behave the same. \par 
For example, in both settings one can look at the sheaf of perfect complexes, i.e. we can look at the functor $A\mapsto (\MMod_{A}^{\perf})^{\simeq}$, that maps an animated ring (resp. an $E_{\infty}$-ring) $A$ to the largest $\infty$-groupoid in perfect $A$-modules. In \cite{TVaq} To\"en-Vaqui\'e show that this stack is locally geometric in the simplicial commutative ring setting (in the language of model categories) and in \cite{AG} Antieau-Gepner show local geometricity in the spectral setting. Here one should remark, that the definition of $n$-geometric stacks in \cite{TV2}, \cite{AG} and \cite{DAG} differ. Most notably, in \cite{TV2} separated Artin stacks are $0$-geometric, in \cite{AG} a $0$-geometric stacks are disjoint unions of affines and in \cite{DAG} the $0$-geometric stacks must have an \'etale atlas.\par
In this article, we focus on derived algebraic geometry in the context of animated rings. The most notable difference is, that for animated rings in positive characteristic, we have a Frobenius endomorphism. This does not hold for $E_{\infty}$-rings, as we would need to find a homotopy coherent Frobenius, whereas for animated rings, we can use the classical Frobenius to induce one on animated rings. Throughout, we will omit that we work in the context of animated rings.\par
Another benefit of animated rings and so, derived algebraic geometry is the naturally occurring deformation theory. There is a natural notion of square-zero extensions and thus derivations. Classically, derivations are represented by the module of K\"ahler-differentials. In the context of derived algebraic geometry, derivations are represented by the cotangent complex.
\par 
The goal of these notes is to recall important aspects of \cite{TV2} and \cite{SAG} in terms of animated rings and prove that the stack of perfect complexes is locally geometric following the proofs in \cite{TVaq} and \cite{AG}. 

\subsubsection*{Derived commutative algebra}
The $\infty$-category of animated rings $\AniAlg{\ZZ}$ is given by polynomial algebras, where we freely adjoin sifted colimits of those. Looking at over categories for any animated ring $A$, we can define the $\infty$-category of animated $A$-algebra $\AniAlg{A}\coloneqq (\AniAlg{\ZZ})_{A/}$. The benefit of this definition is that a lot of questions about functors from $\AniAlg{\ZZ}$ to $\infty$-categories with sifted colimits can be reduced to polynomial algebras. Animated rings should be thought as spectral commutative rings (i.e. $E_\infty$-rings) with extra structure. Especially, after forgetting this extra structure, we can also define modules over animated rings (as modules over the underlying $E_{\infty}$-ring). One important example of such a module is the cotangent complex. This module arises naturally if we want to define an analogue of the module of differentials as the module that represents the space of derivations.\par 
By left Kan-extension of the classical square-zero extension, we can associate to any connective $A$-module, the square-zero extension $A\oplus M$ of $A$ by $M$. In this way, we can define $A$-linear derivations by $M$, as factorizations $A\rightarrow A\oplus M\rightarrow A$ of the identity of $A$. As in the classical case, there is a connective $A$-module $L_{A}$ representing the space of derivations, called \textit{cotangent complex}. If $A$ is discrete, then this is the classical cotangent complex of Illusie. We can also extend this definition to a relative version $L_{B/A}$ for animated ring homomorphisms $A\rightarrow B$.\par 
The underlying $E_{\infty}$-ring of an animated ring is a commutative algebra object in spectra. Thus, we can define homotopy groups of animated rings and automatically see (with the theory developed in \cite{HA}) that we can associate to every animated ring $A$ a $\NN_{0}$-graded ring $\pi_{*}A$. Using this, we reduce definitions like smoothness of a morphism $A\rightarrow B\in\AniAlg{\ZZ}$ to smoothness of the ordinary rings $\pi_0A\rightarrow \pi_0B$ together with compatibility of the graded ring structure, i.e. $\pi_*A\otimes_{\pi_0A} \pi_0B\cong \pi_*B$. \par 
A benefit of naturally occurring deformation theory is, that we can work with the cotangent complex to relate its properties to geometric properties of the underlying animated rings. 
\begin{prop}[\protect{\ref{smooth cotangent}}]
	Let $f\colon A\rightarrow B$ is a morphism of animated rings that is finitely presented in $\pi_{0}$, then $L_{B/A}$ is finite projective if and only if $f$ is smooth. 
\end{prop}
 This cannot be achieved using the module of K\"ahler-differentials, as $\Omega_{B/A}^{1}=0$ whenever $f$ is a closed immersion.\par 
One can also show that \'etale maps of animated rings $A\rightarrow B$ are equivalently given by \'etale maps $\pi_{0}A\rightarrow \pi_{0}B$ of (classical) rings. So, one can think of animated rings as nilpotent thickenings into the homotopy direction of (classical) rings.

\subsubsection*{Derived algebraic geometry}
Defining \textit{derived stacks} is rather straightforward now, we set them as presheaves (of spaces) on $\AniAlg{\ZZ}$ which satisfy \'etale descent. One important example are \textit{affine derived schemes}, which we set as representable presheaves on $\AniAlg{\ZZ}$. We can also define relative versions, where we replace $\ZZ$ with an animated ring. We will see that they naturally satisfy fpqc-descent. For affine derived schemes it is easy to define properties by using their underlying animated rings. To do the same for derived stacks, we will need the notion of \textit{$n$-geometric} morphisms. This notion is defined inductively, where we say a morphism $f\colon F\rightarrow G$ of derived stacks is
\begin{enumerate}
	\item[•] $(-1)$-geometric, if the base change with an affine derived scheme is representable by a an affine derived schemes. A $(-1)$-geometric morphism is smooth if it is so after base change to any affine.
	\item[•] The morphism $f$ is $n$-geometric, if for any affine derived scheme $\Spec(A)$ with morphism $\Spec(A)\rightarrow G$ the base change $F\times_G \Spec(A)$ has a smooth $(n-1)$-geometric effective epimorphism by $\coprod \Spec(T_i)$, where an $n$-geometric morphism is smooth if after affine base change the induced maps of the atlas to the base is $(-1)$-geometric and smooth.
\end{enumerate}
We can also define \textit{locally geometric} derived stacks, as those derived stacks $X$, that can be written as the filtered colimit of open substacks.\par
For a good class\footnote{With ``good class'' we mean stable under base change, composition, equivalences and smooth local on source and target.} of properties \textbf{P} of affine derived schemes, e.g. smooth, flat,\dots\footnote{Note that the property \'etale is not smooth local on the source ! We have to be careful if we want to define \'etale morphisms of $n$-geometric stacks.}, we can now say that a morphism of derived stacks has property $\pbf\in\Pbf$ if it is $n$-geometric for some $n$ and after base change with an affine derived scheme the atlas over the affine base has property $\pbf$. Since geometricity is defined by using smooth atlases, we are mostly interested in this property. 
\par
We would like to relate smoothness to deformation theory, as in the affine case. But before that, we need to define quasi-coherent modules on derived stacks. As modules over animated rings satisfy fpqc descent, we can define the $\infty$-category quasi-coherent modules $\QQCoh(X)$ on an $n$-geometric derived stack $X$ via gluing along smooth atlases, i.e. via right Kan-extension of $A\mapsto \MMod_{A}$. Even though not clear, we will see that using the works of Lurie, one has that for classical schemes, the quasi-coherent modules are equivalently given by complexes in the derived category with quasi-coherent cohomology.
\begin{prop}[\protect{\ref{derived cat is kan ext}}]
	Let $X$ be a scheme. Then we have an equivalence of $\infty$-categories $\Dcal_{\textup{qc}}(X)\simeq \QQCoh(X)$, where $\Dcal_{\textup{qc}}(X)$ denoted the derived $\infty$-category of $\Ocal_{X}$-modules with quasi-coherent cohomologies.
\end{prop}
\par 
Now we can define the cotangent complex as in the affine case. So, we define it as the quasi-coherent module that represents derivations. We can also use the atlas of $n$-geometric derived stacks to see, that any $n$-geometric morphisms of derived stacks has a cotangent complex. In particular, as in the affine case, we can relate geometric properties of derived stacks to properties of the cotangent complex.

\begin{thm}[\ref{cotangent implies smooth}]
	Let $f\colon X\rightarrow Y$ be an $n$-geometric morphism of derived stacks. Then $f$ is smooth if and only if $f_{|\Ring}$ is locally of finite presentation and the cotangent complex of $f$ is perfect and has Tor-amplitude in $[-n-1,0]$.
\end{thm}

The proof of this theorem uses a lot of the homotopical algebraic geometric methods combining ideas of the homotopy theoretical world with ideas of the algebraic geometric world.
We can extend these results even further, to see that any $n$-geometric stack $X$ is nilcomplete, meaning that if we know the values on truncated animated rings, we know the value on all animated rings, i.e. $X\simeq \lim_{n} X\circ\tau_{\leq n}.$ This also shows that $n$-geometric derived stacks are automatically hypercomplete.

\subsubsection*{Geometricity of perfect complexes}

Let us state the main theorem of this article.

\begin{theorem}[\ref{main thm}]
	The derived stack
	\begin{align*}
	\PPerf\colon \AniAlg{R} &\rightarrow \SS\\
	A&\mapsto (\MMod_A^{\textup{perf}})^{\simeq}
	\end{align*}
	is locally geometric and locally of finite presentation.
\end{theorem}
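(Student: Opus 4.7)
The plan is to follow the Toën-Vaquié strategy \cite{TVaq} in its $\infty$-categorical incarnation. Since being locally geometric means admitting an ascending exhaustion by open, $n$-geometric substacks, the first move is to realize
\[
\PPerf \;\simeq\; \operatorname*{colim}_{a\le b}\,\PPerf^{[a,b]},
\]
where $\PPerf^{[a,b]}\subset \PPerf$ denotes the substack classifying perfect complexes of Tor-amplitude in $[a,b]$. The key input is that the Tor-amplitude locus is a Zariski-open condition on any affine base, stable under base change, which exhibits each $\PPerf^{[a,b]}\hookrightarrow \PPerf$ as an open immersion of derived stacks. Since shifting by $[1]$ identifies $\PPerf^{[a,b]}$ with $\PPerf^{[a+1,b+1]}$, it suffices to prove that each $\PPerf^{[0,n]}$ is geometric, and I would proceed by induction on $n$.

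The base case is $\PPerf^{[0,0]}\simeq \coprod_{r\ge 0} B\mathrm{GL}_r$, which is $1$-geometric, smooth, and locally of finite presentation. For the inductive step, I would build an atlas for $\PPerf^{[0,n]}$ from one for $\PPerf^{[0,n-1]}$ via the auxiliary stack $Y$ whose $A$-points parameterize triples $(V, F, \phi)$ with $V$ a finite projective $A$-module in degree zero, $F\in \PPerf^{[0,n-1]}(A)$, and $\phi\colon F\to V$ a morphism. The assignment $(V, F, \phi)\mapsto \cofib(\phi)$ defines the candidate atlas map $Y\to \PPerf^{[0,n]}$. The projection $Y\to\coprod_r B\mathrm{GL}_r\times \PPerf^{[0,n-1]}$ has fibers given by the mapping stacks $\underline{\mathrm{Map}}(F,V)$, which are representable by affine derived schemes via the linear-stack construction attached to the perfect complex $V\otimes F^\vee$. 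This makes the projection a $(-1)$-geometric smooth morphism, and combining with the inductive hypothesis exhibits $Y$ itself as $n$-geometric.

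The main obstacle is verifying that $Y\to \PPerf^{[0,n]}$ is a smooth effective epimorphism. For surjectivity, given $E\in \PPerf^{[0,n]}(A)$, one lifts a surjection $A^r\twoheadrightarrow \pi_0(E)$ to a map $V\to E$ and sets $F\coloneqq \fib(V\to E)$: rotating the fiber sequence yields $E\simeq \cofib(F\to V)$, and a Tor-amplitude calculation using the long exact sequence in homotopy shows $F\in \PPerf^{[0,n-1]}(A)$. Smoothness I would verify using Theorem \ref{cotangent implies smooth}, by computing the relative cotangent complex of $Y\to \PPerf^{[0,n]}$ from the universal cofiber sequence, observing that it is perfect, and checking its Tor-amplitude bound.

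Local finite presentation is the easier endpoint: because perfect $A$-modules are exactly the compact objects of $\MMod_A$, the functor $A\mapsto (\MMod_A^{\pPerf})^{\simeq}$ commutes with filtered colimits of animated rings, so $\PPerf(\operatorname*{colim}_i A_i)\simeq\operatorname*{colim}_i \PPerf(A_i)$. This property is inherited by each open substack $\PPerf^{[a,b]}$ and by the inductively constructed atlas.
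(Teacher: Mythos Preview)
Your overall strategy matches the paper's: filter by Tor-amplitude, induct on the width $n=b-a$, and build an atlas for $\PPerf^{[0,n]}$ out of morphisms between a finite projective module and an object of $\PPerf^{[0,n-1]}$. The paper's auxiliary stack $U$ parameterizes maps $Q\to M[a+1]$ and sends them to the fiber; your $Y$ parameterizes $F\to V$ and takes the cofiber. Up to rotation of the triangle these are the same construction.

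There are two concrete problems. First, the projection $Y\to \coprod_r B\GL_r\times\PPerf^{[0,n-1]}$ is \emph{not} smooth for $n>1$: its fiber over $(V,F)$ is $\Spec\bigl(\Sym_A(F\otimes_A V^\vee)\bigr)$, and $F\otimes_A V^\vee$ has Tor-amplitude in $[0,n-1]$, so its cotangent complex is not finite projective unless $n\le 1$. This does not break your argument, since $(-1)$-geometric and locally of finite presentation is all you need to conclude $Y$ is $n$-geometric, but the claim as written is false (the paper makes exactly this distinction). Second, and more seriously, you never address the diagonal of $\PPerf^{[0,n]}$. In this paper's framework a smooth effective epimorphism from an $n$-geometric $Y$ does not by itself yield $(n+1)$-geometricity of the target: one needs either the diagonal to be $n$-geometric, which the paper proves separately in Lemma~\ref{diagonal geometric} by identifying $\Equiv(P,Q)$ as an open substack of the linear stack attached to $P\otimes Q^\vee$, or one needs $Y\times_{\PPerf^{[0,n]}}Y\to Y$ to be $n$-geometric and smooth (Proposition~\ref{diag + proj geometric}). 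Your outline supplies neither.

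The smoothness of the atlas map $q\colon Y\to\PPerf^{[0,n]}$ is also left vague. The paper does not compute $L_q$ from the universal cofiber sequence; it instead fixes the rank $m$ of $V$ and identifies the fiber $q^{-1}(E)_m$ with the open locus of $\pi_0$-surjective maps inside the linear stack $F^A_{(E^\vee)^m}$, which is smooth because $(E^\vee)^m$ has Tor-amplitude in $[-n,0]$ (Lemma~\ref{spec sym geometric}). Your cotangent-complex route could presumably be made to work, but it is not as immediate as you suggest. Similarly, your lfp argument via commutation with filtered colimits is valid in spirit but invokes a characterization of ``locally of finite presentation'' that the paper does not set up; the paper instead carries lfp through the induction via the atlas.
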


The proof is separated into different steps. First, write this stack as a filtered colimit of substacks $\PPerf^{[a,b]}$, where we fix the Tor-amplitude of the perfect complexes by $a\leq b\in \ZZ$. Per definition, it is enough to show that these are $(b-a+1)$-geometric stacks locally of finite presentation and open in $\PPerf$. \par The proof is by induction over $n\coloneqq b-a$. For $n=0$, this is an interesting and well known stack $\PPerf^{[0,0]}\simeq \Proj\simeq\coprod_n \BGL_n$.\par The idea for higher $n$ is straightforward, knowing that any perfect module $M$ of Tor-amplitude in $[a,b]$ can be written as the fiber of $Q\rightarrow M$, where $Q$ is a perfect module of Tor-amplitude $[a+1,b]$ and $M$ is perfect of Tor-amplitude in $[a+1,a+1]$. Inductively, this reduces to the question, if the stack classifying morphisms between perfect modules is geometric and smooth. But this this follows from explicit calculations of the cotangent complex.\par 
For the openness, we have to consider the vanishing locus of perfect complexes. But since in classical algebraic geometry this is open, we can extend this to the derived world.

\subsection*{Structure of this article}
We start by summarizing \cite{HA} (see Section \ref{HA}). We try to show that spectral rings and modules behave in some sense like expected.\par 
The next step is the introduction of derived commutative algebra, i.e. the theory  of algebras over animated rings (see Section \ref{sec:derived commutative algebra}). We first define animated rings $\AniAlg{\ZZ}$, show that there is a relation to $E_\infty$-rings and use this relation to define modules over animated rings. As a consequence, we can define the cotangent complex and define what properties of morphisms in $\AniAlg{\ZZ}$ are. We end this section by showing that the cotangent complex is highly related to smoothness of morphisms.\par 
After talking about derived commutative algebra, we introduce the theory derived algebraic geometry (see Section \ref{sec:der.alg.geo}). Mainly, we introduce the notion of derived stacks, geometricity of morphisms and derived schemes. We also talk about truncation of those and how it relates to classical algebraic geometry. Further, we again cover smoothness of such morphisms and how it relates to the cotangent complex.\par 
We finish the summary on derived algebraic geometry by showing that the stack of perfect modules is locally geometric and locally of finite presentation (see Section \ref{sec:perf}).\par

\subsection*{Assumptions}
All rings are commutative with one. \par
We work with the Zermelo-Frenkel axioms of set theory with the axiom of choice and assume the existence of inaccessible regular cardinals.\par
Throughout this paper we fix some uncountable inaccessible regular cardinal $\kappa$ and the collection $U(\kappa)$ of all sets having cardinality $<\kappa$, which is a Grothendieck universe (and as a Grothendieck universe is uniquely determined by $\kappa$) and hence satisfies the usual axioms of set theory (see \cite{universe}). When we talk about small, we mean $\Ucal(\kappa)$-small. In the following, we will use some theorems, which assume smallness of the respective ($\infty$-)categories. When needed, without further mentioning it, we assume that the corresponding ($\infty$-)categories are contained in $\Ucal(\kappa)$.\par
If we work with families of objects that is indexed by some object, we will assume, if not further mentioned, that the indexing object is a $\Ucal(\kappa)$-small set.

\subsection*{Notation}
We work in the setting of $(\infty,1)$-categories (see\cite{HTT}). By abuse of notation for any $1$-category $C$, we will always denote its nerve again with $C$, unless otherwise specified.\par
A \textit{subcategory} $\Ccal'$ of an $\infty$-category $\Ccal$ is a simplicial subset $\Ccal'\subseteq\Ccal$ such that the inclusion is an inner fibration. In particular, any subcategory of an $\infty$-category is itself a $\infty$-category and we will not mention this fact.
\begin{enumerate}
	\item[$\bullet$] $\Delta$ denotes the simplex category (see \cite[000A]{kerodon}), i.e. the category of finite non-empty linearly ordered sets, $\Delta_{+}$ the category of (possibly empty) finite linearly ordered sets. We denote with $\Delta_{s}$ those finite linearly ordered sets whose morphisms are strictly increasing functions and with $\Delta_{s,+}$ those (possibly empty) finite linearly ordered sets whose morphisms are strictly increasing functions.
	\item[$\bullet$] With an $\infty$-category, we always mean an $(\infty,1)$-category.
	\item[$\bullet$] $\SS$ denotes the $\infty$-category of small spaces (also called $\infty$-groupoids or anima).
	\item[$\bullet$] $\ICat$ denotes the $\infty$-category of small $\infty$-categories.
	\item[$\bullet$] $\Sp$ denotes the $\infty$-category of spectra.
	\item[$\bullet$] For an $E_{\infty}$-ring $A$, we denote the $\infty$-category of $A$-modules in spectra, i.e. $\MMod_{A}(\Sp)$ in the notation of \cite{HA}, with $\MMod_{A}$.
	\item[$\bullet$] For any ordered set $(S,\leq)$, we denote its corresponding $\infty$-category again with $S$, where the corresponding $\infty$-category of an ordered set is given by the nerve of the of $(S,\leq)$ seen as a $1$-category (the objects are given by the elements of $S$ and $\Hom_S(a,b)=\ast$ if and only if $a\leq b$ and otherwise empty). 
	\item[$\bullet$] For any set $S$ the $\infty$-category $S^{\disc}$ will denote the nerve of the set $S$ seen as a discrete $1$-category (the objects are given by the elements of $S$ and $\Hom_S(a,a)=\ast$ for any $a\in S$ and otherwise empty).
	\item[$\bullet$] For any morphism $f\colon X\rightarrow Y$  in an $\infty$-category $\Ccal$ with finite limits, we denote the functor from $\Delta_{+}$ to $\Ccal$ that is given by the \v{C}ech nerve of $f$ (see \cite[\S 6.1.2]{HTT}) if it exists by $\Cv(Y/X)_{\bullet}$.
	\item[$\bullet$] Let $\Ccal$ be an $\infty$-category with final object $\ast$. For morphisms $f\colon \ast\rightarrow X$ and $g\colon \ast\rightarrow X$, we denote the homotopy pullback $\ast\times_{f,X,g}\ast$ if it exists with $\Omega_{f,g}X$. If $\Ccal$ has an initial object $0$, then we denote the pullback $0\times_{X}0$ with $\Omega X$.
	\item[$\bullet$] Let $f\colon X\rightarrow Y$ be a morphism in $\SS$ and let $y\in Y$. We write $\fib_{y}(X\rightarrow Y)$ or $\fib_{y}(f)$ for the pullback $X\times_{Y}\ast$, where $\ast$ is the final object in $\SS$ (up to homotopy) and the morphism $\ast\rightarrow Y$ is induced by the element $y$, which by abuse of notation, we also denote with $y$.
	\item[$\bullet$] For a morphism $f\colon M\rightarrow N$ in $\MMod_{A}$, where $A$ is some $E_{\infty}$-ring, we set $\fib(f)=\fib(M\rightarrow N)$ (resp. $\cofib(f)=\cofib(M\rightarrow N)$) as the pullback (resp. pushout) of $f$ with the essentially unique zero morphism $0\rightarrow N$ (resp. $M\rightarrow 0$). 
	\item[$\bullet$]  When we say that a square diagram in an $\infty$-category $\Ccal$ of the form 
	$$
	\begin{tikzcd}
		W\arrow[r,""]\arrow[d,""]& X\arrow[d,""]\\
		Y\arrow[r,""] &Z
	\end{tikzcd}
	$$
	 is commutative, we always mean, that we can find a morphism $\Delta^{1}\times\Delta^{1}\rightarrow \Ccal$ of $\infty$-categories that extends the above diagram.

\end{enumerate}

\subsection*{Acknowledgement}
These note are part of my PhD thesis \cite{thesis} (another extract, dealing with derived $F$-zips, is \cite{yaylali}). I want to thank my advisor Torsten Wedhorn with whom I had many discussions concerning problems in derived algebraic geometry. He suggested that I should write these notes separately, so they could be beneficial for all people interested in this topic. I would also like to thank Benjamin Antieu, Adeel Khan and Jonathan Weinberger for their help concerning derived algebraic geometry respectively higher topos theory and Timo Richarz for helpful discussions. Lastly, I want to thank Simone Steilberg and Thibaud van den Hove.


\section{Overview: Higher algebra}
\label{HA}
In this section, we want to summarize some important aspects of spectra, $E_\infty$-rings and modules over those presented in \cite{HA}.\par 

Our main interests are animated rings and modules over those. Animated rings are presented in Section \ref{sec:derived commutative algebra}, so we want to focus on modules. These will be defined over a monoidal $\infty$-category, in this case the $\infty$-category of spectra $\Sp\coloneqq\Sp(\SS)$. So, one should think of $\Sp$ as an $\infty$-category equipped with a tensor product and spectral modules as modules for this tensor product over some commutative ring, called $E_\infty$-rings.\par
Let us start by recalling stable $\infty$-categories. An $\infty$-category is stable if it has a zero object, it admits fibers and cofibers and every cofiber sequence is a fiber sequence (see \cite[Def. 1.1.1.9]{HA}). This can be seen as an $\infty$-analogue of an abelian category. One very important feature of a stable $\infty$-category is that its homotopy category is automatically triangulated (see \cite[Thm. 1.1.2.14]{HA}). Stable $\infty$-categories have other nice stability properties, e.g. a square is a pullback if and only if it is a pushout and there exists finite limits and colimits (see \cite[Prop. 1.1.3.4]{HA}), but listing everything concerning stable $\infty$-categories would be to involved so we refer to \cite[\S1]{HA}.\par
Now let us come to the definition of the spectrum $\Sp(\Ccal)$ of a pointed $\infty$-category $\Ccal$ with finite limits. One definition is obtained by setting $\Sp(\Ccal)$ as the $\infty$-category of excisive, reduced functors from\footnote{Here $\SS^{\textup{fin}}_{\ast}$ denotes the smallest subcategory of $\SS$ that contains the final object, is stable under finite colimits and consist of pointed objects, where pointed means objects $x\in \SS$ with a morphism $\ast\rightarrow x$, (see \cite[Not. 1.4.2.5]{HA}).} $\SS^{\textup{fin}}_{\ast}$ to $\Ccal$ (see \cite[Def. 1.2.4.8]{HA}). Alternatively, one obtains $\Sp(\Ccal)$ as the homotopy limit of the tower $\dots\xrightarrow{\Omega}\Ccal\xrightarrow{\Omega}\Ccal$ (see \cite[Rem. 1.4.2.25]{HA}). Both viewpoints are useful. An important property of $\Sp(\Ccal)$ is that it is a stable $\infty$-category and if $\Ccal$ is presentable then so is $\Sp(\Ccal)$ (see \cite[Cor. 1.4.2.17, Prop. 1.4.4.4]{HA}). From now on, we assume $\Ccal$ to be presentable. The first definition as functors allows us to define a functor 
$$
\Omega^{\infty}\colon \Sp(\Ccal)\rightarrow \Ccal
$$
by evaluation on the zero sphere. In fact, $\Ccal$ is stable if and only if $\Omega^{\infty}$ is an equivalence. Another property of $\Omega^{\infty}$ is that it admits a left adjoint $\Sigma^{\infty}$ (see \cite[Prop 1.4.4.4]{HA}).\par 
Let us set $\Ccal=\SS_*$, the $\infty$-category of pointed spaces and let us set $\Sp\coloneqq\Sp(\SS_*)$. The second definition we gave allows us to identify the homotopy category $\hSp$ with the classical stable homotopy category\footnote{Symmetric spectra are certain sequences of Kan complexes $X_0,X_1,\dots$ with maps $\Sigma X_{n-1}\rightarrow X_{n}$. This category is equipped with a model structure (called the stable model structure) and is closed monoidal. One can also equip certain sequences of pointed topological spaces $X_0,X_1,\dots$ with maps $\Sigma X_{n-1}\rightarrow X_{n}$ with a model structure and endow it with a closed monoidal structure using the smash product. Both constructions are in fact Quillen equivalent (see \cite{SymSpec} for further information about symmetric spectra).} (see \cite[Rem. 1.4.3.2]{HA}). In fact, one can show that $\Sp$ is the $\infty$-category associated to the model category of symmetric spectra (see \cite[Ex. 4.1.8.6]{HA}). This allows us to define a monoidal structure on $\Sp$ using the monoidal structure on the underlying model category, given by the smash product. One important aspect of this monoidal structure is that its unit element is the sphere spectrum\footnote{We use the convention of \cite{HA}, where the sphere spectrum is the image of the final object $\ast\in\SS$ by $\Sigma^{\infty}$ (see \cite[\S 1.4.4]{HA} for more details).} and the tensor product preserves colimits in each variable (see \cite[Cor. 4.8.2.19]{HA}). This construction shows that for a spectrum object $X\in \Sp$ we have $$\Hom_{\Sp}(S,X)\simeq \Hom_{\SS}(\Delta^0,X)\simeq \Omega^{\infty}(X),$$ where $S$ denotes the sphere spectrum. This is nothing new if we think about abelian groups for example, since $\ZZ$-module homomorphisms from $\ZZ$ to any abelian group are uniquely characterized by the elements of the group and since here $\Hom_{\Sp}(S,X)$ is a Kan-complex, we see that it is equivalent to the underlying space of the spectrum $X$. An important side remark is that the heart of spectra is naturally identified with (the nerve of) the category of abelian groups.\par 
Using $\Omega^{\infty}$, stability of $\Sp$ and homotopy groups of Kan-complexes, we can define an accessible $t$-structure on $\Sp$ (see \cite[Prop 1.4.3.6]{HA}). In particular, we can define the homotopy groups of spectrum objects $X\in \Sp$, via $\pi_nX\simeq\pi_0\Omega^{\infty}(X[-n])$ (see proof of \cite[Prop 1.4.3.6]{HA}) and if $n\geq 2$, then these are given by $\pi_n\Omega^{\infty}(X)$ (see \cite[Rem. 1.4.3.8]{HA}).\\ \par 
Before we can define modules, we start with $E_\infty$-rings. We will not go into detail, since we will work with an analogue, namely animated rings. One should think of animated rings as $E_\infty$-rings with an extra structure. This extra structure vanishes if we are in characteristic zero but gives us no relation except a conservative functor from animated rings to $E_\infty$-rings in positive or mixed characteristic (see Proposition \ref{fun E to SCR}).\par 
As stated above, one should think of $\Sp$ as an $\infty$-categorical analogue of abelian groups. To define commutative rings in this $\infty$-category one could use the theory of $\infty$-operads and describe $E_\infty$-rings in terms of sections of the operad induced by the monoidal structure of $\Sp$ (see \cite[\S2]{HA} for more information about $\infty$-operads and \cite[\S3, 4]{HA} for the construction of rings using this approach). We will not describe how this is achieved but instead use a rectification argument, i.e. we set $E_\infty$-rings as the $\infty$-category associated to the commutative algebra objects in the underlying model category of $\Sp$. Both approaches are equivalent (see \cite[Thm. 4.5.4.7]{HA}) so we can think of $E_\infty$-rings as certain commutative rings in the model category associated to $\Sp$. Using the Eilenberg-Mac Lane spectrum one can see that for example ordinary commutative rings are discrete $E_\infty$-rings. This construction is also vague, since it requires the $\infty$-categorical localization of cofibrant commutative algebra objects in the underlying model category. But contrary to $\infty$-operads, we think it gives a more classical feeling of commutative rings.\\ \par 
Now let us conclude this section with modules over $E_{\infty}$-rings. Again \cite{HA} deals with modules using $\infty$-operads (see \cite[\S 3, 4]{HA}). Analogous to the $E_\infty$-ring case, we can apply a rectification statement to define modules using the monoidal structure on the underlying model category, again both constructions are equivalent (see \cite[Thm. 4.3.3.17]{HA}). For an $E_\infty$-ring $A$, we will denote the $\infty$-category of spectral $A$-modules with $\MMod_A$. By forgetting the module structure, we get a functor $\MMod_A\rightarrow \Sp$ (induced by the construction using $\infty$-operads, see \cite[Def. 3.3.3.8]{HA}). As in the classical case for abelian groups and modules, the forgetful functor is conservative (this follows from \cite[Cor. 4.2.3.2]{HA}). Further, if we restrict ourselves to connective\footnote{Here an object $c$ in an $\infty$-category $\Ccal$ with a conservative functor $f\colon\Ccal\rightarrow \Dcal$ into a stable $\infty$-category with $t$-structure is connective, if $f(c)$ is connective, i.e. $\pi_{i}f(c)\cong 0$ for $i<0$.} modules, then even the composition $\MMod_A^{\cn}\rightarrow\Sp\xrightarrow{\Omega^{\infty}}\SS$ is conservative (see \cite[Rem. 7.1.1.8]{HA}) This is an analogue to the fact that a morphism of classical modules is an isomorphism if and only if it is a bijection on the underlying sets. Another fact that is well known in the classical case is that, we have the following equivalence in $\SS$
$$
\Hom_{\MMod_A}(A,M)\simeq \Hom_{\Sp}(S,M)\simeq \Omega^\infty(M)
$$
(see \cite[Cor. 4.2.4.7]{HA}).\par 
As in the classical case, we can also define $E_\infty$-algebras and modules over algebras by setting $\Einfty_A\coloneqq \CAlg(\MMod_A)$, i.e. we endow $A$-modules with a monoidal structure and define $A$-algebras as commutative object in $\MMod_A$ (see \cite[7.1.3.8]{HA}). Alternatively, we could look at the over category $\Einfty_{A/}$ but both constructions are in fact equivalent (see \cite[Cor. 3.4.1.7]{HA}). We also have a forgetful functor from $\Einfty_A$ to $\Sp$, which under the identification $\Einfty_A\simeq \CAlg(\MMod_A)$ factors through the forget functor $\Einfty_A\rightarrow \MMod_A$ which is conservative (again follows from \cite[Cor. 4.2.3.2]{HA}). Further, the identification of $\pi_nR$ with $\pi_0\Hom_{\Sp}(S[n],R)$ for an $E_\infty$-ring $R$, where $\pi_nR$ is defined on the underlying spectrum of $R$, allows us to endow $\pi_*R\coloneqq \bigoplus_{n\in \ZZ}\pi_n R$ with a graded commutative ring structure (see \cite[7 7.1.1, Rem. 7.1.1.6]{HA}). We can also endow $\pi_*M\coloneqq\bigoplus_{n\in\ZZ}\pi_nM$ for any $R$-module $M$, with a graded $\pi_*R$-module structure (see \cite[\S 7.1.1]{HA}).\par 
The $\infty$-category of $A$-modules is also stable (see \cite[7.1.1.5]{HA}) and has an accessible $t$-structure induced by the accessible $t$-structure on $\Sp$. This $t$-structure allows us to identify the heart of $\MMod_A$ with the (nerve of the) ordinary category of $\pi_0A$-modules (see \cite[Prop. 7.1.1.13]{HA}) (note that by the above $\pi_0A$ is an ordinary commutative ring). This is analogous to $E_\infty$-algebra case, where for a connective $E_\infty$-ring $A$ the discrete $A$-algebras are precisely the ordinary commutative $\pi_0A$-algebras (see \cite[Prop. 7.1.3.18]{HA} (note that there is a typo in the statement)). \par 
A key difference to connective $E_\infty$-rings is that over ordinary (discrete) commutative rings $R$, the $R$-module spectra are not discrete $R$-modules but instead we have $\MMod_R\simeq \Dcal(R)$ as symmetric monoidal $\infty$-categories, where $\Dcal(R)$, denotes the derived $\infty$-category of $R$-modules\footnote{The derived $\infty$-category of a Grothendieck abelian category $\Acal$ is the $\infty$-category associated to the model category of chain complexes $\Ch(\Acal)$ (see \cite[Prop.  1.3.5.15.]{HA} and \cite[Prop. 1.3.5.3]{HA} for the model structure on chain complexes). The homotopy category $\textup{h}\Dcal(\Acal)$ is equivalent to the ordinary derived category $D(\Acal)$ of $\Acal$.} (see \cite[Thm. 7.1.2.13]{HA}). Let us also remark, that under this equivalence the homotopy groups of module spectra are isomorphic to the homology groups of the associated complex and since this is an equivalence of symmetric monoidal $\infty$-categories this isomorphism also respects the module structure (see Remark \ref{pi_n = H_n}).


\section{Derived commutative algebra}
\label{sec:derived commutative algebra}
In the following, $R$ will be a ring.\par 
In this section, we want to give a quick summary about animated rings, present the cotangent complex and analyse smooth morphisms between animated rings. Mainly, we show that these notions arise from our classical point of view and behave like one can expect.
\subsection{Animated rings}
\label{sec:simplicial commutative algebras}

In this section, we summarise important aspects of animated rings, for this we will follow \cite[\S25]{SAG}.
\par
By $\Poly_R$ we denote the category of polynomial $R$-algebras in finitely many variables. Then the category of $R$-algebras is naturally equivalent to the category of functors from $\Poly_R^{\op}$ to $\sets$ which preserve finite products\footnote{For a functor $F\colon \Poly_{R}^{\op}\rightarrow \Sets$ that preserves finite products we can set $F(R[X])$ as the underlying ring of $F$, where the multiplication is induced by $R[T]\rightarrow R[T_{1}]\otimes_{R}R[T_{2}],\ T\mapsto T_{1}T_{2}$ and the addition by $T\mapsto T_{1}+T_{2}$. Conversely, for any $R$-algebra $A$, we can construct a contraviarant functor from $\Poly_{R}$ to $\Sets$ via $A\mapsto\Hom_{\Alg{R}}(-,A)$. These constructions are inverse to each other.}. Applying this construction to the $\infty$-categorical case, we obtain $\AniAlg{R}$ the $\infty$-category of animated $R$-algebras, i.e.
$$
\AniAlg{R} \coloneqq \Fun_{\pi}(\Poly_R^{\op},\SS),
$$
where the subscript $\pi$ denotes the full subcategory of $\Fun(\Poly_R^{\op},\SS)$, that preserve finite products.
Alternatively, this $\infty$-category is obtained by freely adjoining sifted colimits to $\Poly_R$ (this is the meaning of \cite[Prop. 5.5.8.15]{HTT} using that any element in $\AniAlg{R}$ can be obtained by a sifted colimit in $\Poly_{R}$ by \cite[Lem. 5.5.8.14, Cor. 5.5.8.17]{HTT}).\par 
For a cocomplete category $C$ that is generated under colimits by its full subcategory of compact projective objects $C^{\textup{sfp}}$, Cesnavicius-Scholze define the $\infty$-category \textit{animation of $C$} in \cite[\S 5.1]{CS} denoted by $\Ani(C)$. The $\infty$-category $\Ani(C)$ is the $\infty$-category freely generated under sifted colimits by $C^{\textup{sfp}}$. In particular, with this definition, we see that $\Ani(\Alg{R})\simeq \AniAlg{R}$. This process can also be applied to $R$-modules, which we will look at later in Section \ref{sec:mod}, and to abelian groups, where $\Ani(\Ab)$ recovers the $\infty$-category of simplicial abelian groups (see \cite[\S 5.1]{CS} for more details). The animation of $\Sets$ recovers the $\infty$-category of $\infty$-groupoids, i.e. $\Ani(\Sets)\simeq \SS.$\par

We have another description for animated $R$-algebras. Let $\Abf$ be the category of product preserving functors from $\Poly_{R}^{\op}$ to simplicial sets\footnote{As for product preserving functors from $\Poly^{\op}_{R}$ to $\Sets$ it is not hard to see that a product preserving functor $F\colon\Poly^{\op}_{R}\rightarrow \SS$ defines a simplicial commutative ring, via $F\mapsto F(R[X])$ (the face and degeneracy maps have to respect the ring structure by functoriality). In particular, in this way we can identify $\Abf$ with the category of simplicial commutative $R$-algebras.}. We obtain a model structure on $\Abf$ by the Quillen model structure on simplicial sets (see \cite[5.5.9.1]{HTT}) - this is often called the model category of \textit{simplicial commutative $R$-algebras}. This model category is known to be a combinatorial, proper, simplicial model category (for more details on these properties, we refer to \cite[Ch. II \S4, \S6]{Quillen}). The $\infty$-category associated to this model category (i.e. $\N^{\textup{hc}}(\Abf^{\circ})$) is equivalent to $\AniAlg{R}$, where $\Abf^\circ$ denotes the full subcategory consisting of fibrant/cofibrant objects (see \cite[Cor. 5.5.9.3]{HTT}). 

\begin{defi}
	For a ring $R$, we define the $\infty$-category of \textit{animated $R$-algebras}, denoted by $\AniAlg{R}$ as the $\infty$-category $\Fun_{\pi}(\Poly_{R}^{\op},\SS)$. For an animated ring $A$, we define $\AniAlg{A}\coloneqq (\AniAlg{\ZZ})_{A/}$ as the $\infty$-category of \textit{animated $A$-algebras}.\par 
	Further, if $R=\ZZ$, we call an animated $R$-algebra an \textit{animated ring}.
\end{defi}

\begin{rem}
	 Note that for a ring $R$, we have $\AniAlg{R}\simeq(\AniAlg{\ZZ})_{R/}$ by \cite[Prop. 25.1.4.2]{SAG}.
\end{rem}

\begin{rem}
\label{Ani presentable}
	Since $\AniAlg{A}$ is the over category of $\AniAlg{\ZZ}$ which is the $\infty$-category of a combinatorial model category (which is explained in the beginning), we see with \cite[Prop. 5.5.3.11, Prop. A.3.7.6]{HTT} that $\AniAlg{A}$ is a presentable $\infty$-category.
\end{rem}

The following theorem allows us to connect $\AniAlg{R}$ with $\Einftycn_R$. The idea is simple,
as $\AniAlg{R}$ is generated by $\Poly_{R}$ under sifted colimits, any sifted colimit preserving functor is up to homotopy determined by its restriction to $\Poly_{R}$. Since $\Poly_{R}$ lies fully in $\Einftycn_{R}$ (which has sifted colimits), we therefore get a functor $\theta\colon\AniAlg{R}\rightarrow \Einftycn_{R}$ corresponding to the inclusion $\Poly_{R}\hookrightarrow \Einftycn_{R}$. We can also use this philosophy to analyze functors by restricting them to $\Poly_{R}$ if they preserve sifted colimits.

\begin{prop}
	Let $j\colon \Poly_R\rightarrow \AniAlg{R}$ denote the Yoneda-embedding. Then we have an equivalence of $\infty$-categories
	$$
	\Fun_{\textup{sift}}(\AniAlg{R},\Einftycn_R)\rightarrow \Fun(\Poly_R,\Einftycn_R),
	$$
	where the subscript \textup{sift} denotes the full subcategory of sifted colimit preserving functors.
\end{prop}
\begin{proof}
	This follows from \cite[Prop. 5.5.8.15, Cor. 5.5.8.17]{HTT} and note that $\AniAlg{R}$ has small colimits since it is presentable by Remark \ref{Ani presentable}.
\end{proof}

\begin{prop}
	\label{fun E to SCR}
	The functor $\theta\colon \AniAlg{R}\rightarrow \Einftycn_R$ described above is conservative, has a left adjoint $\theta^L$ and a right adjoint $\theta^R$.
\end{prop}
\begin{proof}
	This is \cite[25.1.2.2]{SAG}.
\end{proof}

\begin{rem}
	Let us take a closer look at the left adjoint of $\theta$. We know that per definition $\ZZ[X]$ is a compact and projective object of $\AniAlg{\ZZ}$, so in particular the functor $\Hom_{\AniAlg{\ZZ}}(\ZZ[X],-)$ commutes with sifted colimits (see \cite[Prop. 5.5.8.25]{HTT}) and since we can write any animated ring as a sifted colimit of polynomial rings, we see that for any animated ring $A$, we have $\Hom_{\AniAlg{\ZZ}}(\ZZ[X],A)\simeq \Omega^{\infty}\theta(A)$. We observe that for the free $E_{\infty}$-$\ZZ$-algebra in one variable $\ZZ\lbrace X\rbrace$ the same equivalence holds, i.e. $\Hom_{\Einftycn_{\ZZ}}(\ZZ\lbrace X\rbrace,\theta(A))\simeq \Omega^{\infty}\theta(A)$. Using the adjunction, we therefore see that $\theta^{L}(\ZZ\lbrace X\rbrace)\simeq \ZZ[X]$.
\end{rem}

The functor $\theta$ allows us to view any $A\in\AniAlg{R}$ as a ring object in $\Sp$. Thus we can associate fundamental groups to this object and also module objects in $\Sp$.
\begin{defi}
	Let $A\in \AniAlg{R}$. For any $i\in\ZZ$, we set $\pi_i(A)\coloneqq \pi_i(\theta(A))$ and we set $\MMod_A\coloneqq \MMod_{\theta(A)}$. We refer to elements of $\MMod_{A}$ as \textit{$A$-modules}.
\end{defi}

	Recall from Section \ref{HA}, that animated rings per definition have no negative homotopy groups and $\pi_{*}A$ is a graded ring.	

\begin{notation}
\label{notation truncation}
	We want to remark that we have the notion of truncation functors for animated rings (see \cite[\S 25.1.3]{SAG}), denoted by $\tau_{\leq n}$ for $n\in\NN_{0}$ and are induced by the truncations on the underlying $E_{\infty}$-rings. For an animated ring $A$ we denote $\tau_{\leq n}A$ with $A_{\leq n}$.\par 
	We denote with $(\AniAlg{R})_{\leq n}$ the full subcategory of $n$-truncated animated $R$-algebras. The elements of $(\AniAlg{R})_{\leq 0}\simeq \Alg{R}$ are called \textit{discrete}.
\end{notation}

\begin{rem}
	The inclusion of $n$-truncated animated $R$-algebras $(\AniAlg{R})_{\leq n}$ into $\AniAlg{R}$, for some $n\in\NN_{0}$, has a left adjoint denoted by $\tau_{\leq n}$ (see \cite[Rem. 25.1.3.4]{SAG}). Since per definition $\tau_{\leq 0} = \pi_{0}$, we see that passage to the underlying ring of an animated ring via $\pi_{0}$ preserves colimits.
\end{rem}

We can view any connective $E_\infty$-algebra over $R$ as a connective $R$-module (more precisely $\Einftycn_R\simeq\CAlg(\MMod^{\cn}_R)$). This induces a forgetful functor $\Einftycn_R\rightarrow \MMod^{\cn}_R$, which has a left adjoint (see \cite[Ex. 3.1.3.14]{HA}). Using the above left adjoint $\theta^L$, we can associate an animated ring to any connective $R$-module $M$.

\begin{defi}
	Let $A$ be an animated ring. Let $M\in\MMod_{A}^{\cn}$ be a connective $A$-module. We denote the image of $M$ under the left adjoint to the forgetful functor $\Einftycn_{\theta(A)}\rightarrow \MMod^{\cn}_A$ composed with $\theta^{L}$ by $\Sym_{A}(M)$ and call it the \textit{symmetric animated $A$-algebra of $M$}.\footnote{By \cite[Prop. 5.2.5.1]{HTT} the adjunction of $\theta$ and $\theta^{L}$ can be transfered to the adjunction of slice categories, i.e. $\theta$ induces an adjunction, which by abuse of notation we denote the same, between $\AniAlg{A}$ and $\Einftycn_{\theta(A)}$.}
\end{defi}
Further, in the following remark we want to explain that there are two possible ways to define homotopy groups on animated rings, rather naturally. But both notions are in fact equivalent (in the sense that the two notions produce isomorphic homotopy groups).

\begin{remark}
	The homotopy groups of an animated ring can be defined alternatively via the following. We have a natural functor from rings to abelian groups and then to sets by forgetting the ring structure. This induces a functor from $$F\colon \AniAlg{\ZZ}\rightarrow\Ani(\Ab)\rightarrow \Ani(\Sets)$$ (see \cite[\S 5.1.4]{CS}). The animation of abelian groups is the $\infty$-category of simplicial abelian groups and the animation of sets is $\SS$. Using this functor, we can also define the $n$-homotopy group of an animated ring $A$ via $\pi_{n}F(A)\in\SS$.	
	This construction of the homotopy groups agrees with the construction of the homotopy groups via passage to spectra.\par 
	The reason for this is the following commutative diagram
	$$
	\begin{tikzcd}
		\AniAlg{\ZZ}\arrow[dd,"F",bend right, shift right =0.9em, swap]\arrow[r,"\theta"]\arrow[d,""]& \Einfty^{\cn}\arrow[d,""]\\
		\Ani(\Ab)\arrow[r,""]\arrow[d,""]&\Dcal(\ZZ)^{\cn}\arrow[dl,"\Omega^{\infty}"]\\
		\SS
	\end{tikzcd}
	$$
	 (as all of these functors commute with sifted colimits\footnote{For $F$ and $\theta$ this follows from construction. That the forgetful functor from connective $E_{\infty}$-rings to modules commutes with sifted colimits follows from the fact, that the tensor product on spectra commutes with sifted colimits (see \cite[4.8.2.19, Cor. 3.2.3.2]{HA}), for $\Omega^{\infty}$ see \cite[Prop. 1.4.3.9]{HA}.}, we only need to check commutativity on polynomial $\ZZ$-algebras, which follows by construction).
\end{remark}

We want to conclude this section by explaining localizations of animated rings.

\begin{defrem}
	Let $\Ccal$ be a presentable $\infty$-category and let $S$ be a set of morphisms in $\Ccal$. Then, we say that an object $Z\in \Ccal$ is \textit{$S$-local} if for any morphism $f\colon X\rightarrow Y\in S$, we have that the morphism $\Hom_{\Ccal}(Y,Z)\rightarrow \Hom_{\Ccal}(X, Z)$ induced by $f$ is an equivalence. We say that a morphism $f\colon X\rightarrow Y$ in $\Ccal$ is an \textit{$S$-equivalence} if for any $S$-local object the morphism  $\Hom_{\Ccal}(Y,Z)\rightarrow \Hom_{\Ccal}(X, Z)$ induced by $f$ is an equivalence (see \cite[Def. 5.5.4.1]{HTT}, note that this definition does not need presentability of $\Ccal$ but as explained below presentability allows us to work with the full subcategory of $S$-local objects in a nice way).\par 
	The inclusion of the full subcategory $\Ccal[S^{-1}]$ of $S$-local objects in $\Ccal$ admits a left adjoint, which we call localization $\Ccal\rightarrow \Ccal[S^{-1}]$ (see \cite[Prop. 5.5.4.15]{HTT}). The idea is to "complete" $S$ by taking $\Sbar$ as the set of $S$-equivalences in $\Ccal$. Then $\Ccal[S^{-1}]$ is the localization of $\Ccal$ by $\Sbar$, which is strongly saturated (see \cite[\S 5.5.4]{HTT} for more details).\par
	 This is analogous to the classical localization, where even if we want to localize at one element of a ring, we have to automatically localize the multiplicative subset generated by the element.
\end{defrem}

\begin{propdef}[Localization]
	\label{localization}
	Let $A$ be an animated $R$-algebra and let $F\subseteq \pi_0(A)=\pi_0(\Hom_{\MMod_A}(A,A))$ be a subset. Then there exists $A[F^{-1}]\in \AniAlg{A}$, such that for all $B\in\AniAlg{A}$ the simplicial set $\Hom_{\AniAlg{A}}(A[F^{-1}],B)$ is nonempty if and only if the image of all $f\in F$ under $\pi_0(A)\rightarrow\pi_0(B)$ is invertible. Further if it is nonempty, then it is contractible.
\end{propdef}
\begin{proof}
	The proof is analogous to the proof of \cite[Prop. 1.2.9.1]{TV2}, which treats the special case where $F$ has only one element.\par 
	Let $\Sym\colon \MMod^{\cn}_A\rightarrow \AniAlg{A}$ be the left adjoint to the map $\AniAlg{A}\rightarrow\Einftycn_A\rightarrow\MMod^{\textup{cn}}_A$. Consider the set $S\coloneqq\lbrace \Sym(f)\colon \Sym(A)\rightarrow \Sym(A)\mid f\in F\rbrace$. We set $A[F^{-1}]$ as the image of $A$ under the localization map $\AniAlg{A}\rightarrow \AniAlg{A}[S^{-1}]$, where $\AniAlg{A}[S^{-1}]$ denotes the full subcategory of $\AniAlg{A}$ of $S$-local objects (note that $\AniAlg{A}$ is presentable by Remark \ref{Ani presentable}).\par 
	An object $B\in \AniAlg{A}$ is $S$-local if and only if the induced map
	$$
	f^*\colon \Hom_{\MMod_A}(A,B)\rightarrow \Hom_{\MMod_A}(A,B)
	$$
	is an equivalence for all $f\in F$. Equivalently, $f^{*}$ is an equivalence if and only if the multiplication by the image of $f$ on $\pi_iB$ is an equivalence for all $f\in F$. Therefore, any $A$-algebra $B$ is $S$-local if and only if the image of $f$ under $\pi_0A\rightarrow \pi_0B$ is invertible for all $f\in F$.\par
	Now assume that $\Hom_{\AniAlg{A}}(A[F^{-1}],B)$ is nonempty, then the morphism $\pi_{0}A\rightarrow \pi_{0}B$ factors through $\pi_{0}A[F^{-1}]$, so every $f\in F$ has invertible image in $\pi_{0}B$, since by definition $A[F^{-1}]$ is $S$-local.
	To see that if $\Hom_{\AniAlg{A}}(A[F^{-1}],B)$ is nonempty, then it is contractible, note that in this case $B$ is $S$-local and we have
	$$
		\Hom_{\AniAlg{A}}(A[F^{-1}],B)\simeq \Hom_{\AniAlg{A}}(A,B)\simeq \ast
	$$
	by adjunction.
\end{proof}

\begin{rem}
	Note that in the proof of Proposition \ref{localization} if we have a subset $F\subseteq \pi_{0}A$ and denote its generated multiplicative subset by $S$, then by \cite[Prop. 5.5.4.15]{HTT} an animated $A$-algebra $B$ is $\lbrace  \Sym(f)\colon \Sym(A)\rightarrow \Sym(A)\mid f\in F\rbrace$-local if and only if it is $\lbrace  \Sym(s)\colon \Sym(A)\rightarrow \Sym(A)\mid s\in S\rbrace$-local (note that $\AniAlg{A}$ is presentable by Remark \ref{Ani presentable}).
\end{rem}

\begin{notation}
	Let $A$ be an animated ring and $f\in \pi_{0}A$. Then we define the localization by an element as $A[f^{-1}]\coloneqq A[\lbrace f\rbrace^{-1}]$.
\end{notation}

\begin{rem}
Let $F$ be a subset of $\pi_{0}A$ and let $S$ be the multiplicative subset generated by $F$. By the universal property of the localization of rings, we know that $\pi_{0}A[F^{-1}]\cong S^{-1}\pi_{0}A$.\par 
Now assume that $F$ is given by a single element $f\in \pi_{0}A$. After the characterization of \'etale morphisms via the cotangent complex, we will see that the $\pi_{i}A[f^{-1}]\cong (\pi_{i}A)_{f}$ for all $i\geq 0$ (see Lemma \ref{homotopy of localization})	
\end{rem}


\begin{defi}
	Let $A\rightarrow B$ be a morphism of animated rings. Then $B$ is \textit{locally of finite presentation} over $A$ if it is compact as an animated $A$-algebra, i.e. the functor $\Hom_{\AniAlg{A}}(B,-)\colon \AniAlg{A}\rightarrow \SS$ commutes with filtered colimits.
\end{defi}

\begin{rem}
	Our notion of ''locally finite presentation'' is \textit{stronger} than the notion of ``finitely presented'' in the classical sense. What we mean is that if a morphism of animated rings $A\rightarrow B$ is locally of finite presentation, then the induced morphism of rings $\pi_{0}A\rightarrow \pi_{0}B$ is finitely presented. But the other way around is not true, as we will see that $A\rightarrow B$ is locally of finite presentation if and only if $\pi_{0}A\rightarrow \pi_{0}B$ is locally of finite presentation and its cotangent complex is perfect (see Proposition \ref{lfp perfect cotangent}). An example of a finitely presented morphism with non-perfect cotangent complex is the non-lci morphism $\FF_{p}\rightarrow \FF_{p}[X,Y]/(X^{2},XY,Y^{2})$ (the non-perfectness follows from \cite[(1.3)]{Avramov}).
\end{rem}

As open immersions are finitely presented in the classical world of algebraic geometry, we would expect a similar result in the derived world. This will get explicit later but first we would like to show that the fundamental example of an open immersion, the localization of a ring along an element, is locally of finite presentation. 

\begin{lem}
	\label{localization lfp}
	Let $A$ be an animated ring and $f\in\pi_0A$. Then $A[f^{-1}]$ is locally of finite presentation over $A$.
\end{lem}
\begin{proof}
	We have that $A$-algebra morphisms from the localization to any other $A$-algebra $B$ are either empty or contractible, depending whether $f$ is invertible in $\pi_0B$. For any filtered system $(B_i)_{i \in I}$ of $A$-algebras, we have $\pi_0\colim_{i \in I} B_i = \colim_{i \in I} \pi_0 B_i$. Therefore, we see that $\Hom_{\AniAlg{R}}(A[f^{-1}],\colim_{i \in I} B_{i})$ is empty if $f$ is not invertible in $\colim_{i \in I} \pi_{0}B_{i}$ and if $f$ is invertible in $\colim_{i \in I} \pi_{0}B_{i}$, then the space $\Hom_{\AniAlg{R}}(A[f^{-1}],\colim_{i \in I} B_{i})$ is contractible. Since $\pi_{0}A[f^{-1}]\cong \pi_{0}(A)_{f}$ is locally of finite presentation as a $\pi_{0}A$-algebra, we know that $f$ is invertible in $\colim \pi_{0}B_{i}$ if and only there is an $i'\in I$ such $f$ is invertible in $\pi_{0}B_{i'}$. So in particular, if there is such $i'$, we have $$\Hom_{\AniAlg{R}}(A[f^{-1}],\colim_{i \in I} B_{i})\simeq\ast\simeq \colim_{i\in I}\Hom_{\AniAlg{R}}(A[f^{-1}],B_{i})$$ and if there is no such $i'$, we have 
$$\Hom_{\AniAlg{R}}(A[f^{-1}],\colim_{i \in I} B_{i})=\emptyset= \colim_{i\in I}\Hom_{\AniAlg{R}}(A[f^{-1}],B_{i}).$$
\end{proof}

\subsection{Modules over animated rings}
\label{sec:mod}

Let us recall some useful notions about modules over animated ring. In the following $A$ will be an animated ring.

\begin{rem}
	\label{pi_n = H_n}
	Before we start, let us remark that under the symmetric monoidal equivalence of stable $\infty$-categories $\MMod_{R}\simeq \Dcal(R)$ explained in Section \ref{HA}, for a ring $R$, the homotopy groups are isomorphic to the corresponding homology groups, this isomorphism respects the module structure (see \cite[B.1]{SS}).\par 
\end{rem}

\begin{rem}
	We want to make clear that throughout, we will work in \textit{homological} notation. This is natural from the homotopy theory standpoint but differs from the algebraic geometry standpoint which uses \textit{cohomological} notation. In particular, we will define notions such as ``Tor-amplitude'' homologically.
\end{rem}

\begin{defrem}
	Let $P$ be a connective $A$-module, then $P$ is called \textit{projective} if for all connective $A$-modules $Q$, we have $\Ext^{1}(P,Q)\cong 0$, where $\Ext^{1}(P,Q)$ is defined as $$\pi_0\Hom_{\MMod_A}(P,Q[1])\cong \Hom_{h\MMod_{A}}(P,Q[1])\footnote{Recall that the homotopy category of a stable $\infty$-categories is an additive category, so the expression $\Ext^{1}(P,Q)\cong 0$ makes sense.}.$$ \par Equivalently, $P$ is projective if for all fiber sequences $$M'\rightarrow M\rightarrow M'',$$ where $M,M',M''$ are connective $A$-modules, the induced map $\Ext^0(P,M)\rightarrow \Ext^0(P,M'')$ is surjective (see \cite[Prop. 7.2.2.6]{HA}. This also shows equivalence with the definition given in \cite[Def. 7.2.2.4]{HA}).\par
	We denote by $\Proj(A)$ the full subcategory of projective $A$-modules in $\MMod_{A}$.
\end{defrem}

\begin{rem}
	From the definition of projective modules it follows that if an $A$-module $P$ is projective, then for any connective module $Q$ we have $\Ext^{i}(P,Q)\cong 0$ for all $i\geq 0$. In fact, this condition is equivalent to the same condition with $Q$ assumed to be discrete. This is also an equivalent definition of projective modules (see \cite[Prop. 7.2.2.6]{HA}).
\end{rem}

\begin{defi}
	A connective $A$-module $M$ is called \textit{flat}, if $\pi_{0}M$ is a flat $\pi_{0}A$-module and the natural morphism $\pi_{i}A\otimes_{\pi_{0}A}\pi_{0}M\rightarrow \pi_{i}M$ is an isomorphism.
\end{defi}

The compatibility with the higher homotopy groups is important if we want to define for example \textit{flat morphisms} of animated rings. The Tor-spectral sequence below then shows us that the homotopy groups are compatible with base change.\par
The following is a direct consequence of the definition of flatness.

\begin{lem}
\label{flat proj}
	Let $P$ be an $A$-module.
	\begin{enumerate}
		\item	If $P$ is projective it is flat.
		\item 	If $P$ is flat, then it is projective if and only if $\pi_0P$ is projective over $\pi_0A$.
	\end{enumerate}
\end{lem}
\begin{proof}
	See \cite[Lem. 7.2.2.14]{HA} and \cite[Prop. 7.2.2.18]{HA}.
\end{proof}

We also have a homotopy equivalence relating projective modules over $A$ and over $\pi_0A$.

\begin{prop}
	\label{projective lift}
	The base change with the natural map $A\rightarrow \pi_0A$ induces an equivalence between $h\Proj(A)$ and the $h\Proj(\pi_{0}A)$\footnote{Note, that since projective modules are flat, $h\Proj(\pi_{0}A)$ is just the usual category of (classical) projective $\pi_{0}A$-modules.}.
\end{prop}
\begin{proof}
	This follows from \cite[Cor. 7.2.2.19]{HA}.
\end{proof}

\begin{defi}
	We call an $A$-module $P$ \textit{finite projective}, if it is projective and $\pi_0P$ is finitely presented over $\pi_0A$.
\end{defi}

We can generalize this notion via the notion of perfectness.

\begin{defrem}
	An $A$-module $P$ is called \textit{perfect}, if it is a compact object of $\MMod_A$.
	Equivalently, $P$ is perfect if and only if there exists an $A$-module $P^{\vee}$ such that we have $\Hom_{\MMod_A}(P,-)\simeq \Omega^{\infty}(P^{\vee}\otimes_A -)$ (see \cite[Def. 7.2.4.1]{HA} and \cite[Prop. 7.2.4.2]{HA}).
\end{defrem}

\begin{remark}
	If $A$ is discrete, then we have $\MMod_A\simeq \Dcal(\pi_{0}A)$ as symmetric monoidal $\infty$-categories and a complex of $A$-modules is perfect in the our sense if and only if it is perfect in the classical sense (see \cite[07LT]{stacks-project}).
\end{remark}

\begin{rem}[\protect{\cite[Prop. 7.2.1.19]{HA}}]
\label{Tor-ss}
	Let us insert a quick remark about the Tor-spectral sequence associated to spectral modules. Let $A$ be an $E_{\infty}$-ring and $M,N\in\MMod_{A}$. Then $\pi_{\ast}M$ and $\pi_{\ast}N$ are graded $\pi_{\ast}A$-modules and we have a spectral sequence in graded abelian groups of the following form called the \textit{Tor-spectral sequence}
	$$
		E^{p,q}_{2}= \Tor_{p}^{\pi_{\ast}A}(\pi_{*}M,\pi_{*}N)_{q}\Rightarrow \pi_{p+q}(M\otimes_{A}N).
	$$
	Here convergence is in the sense that $\pi_{p+q}(M\otimes_{A}N)$ has a filtration $F^{\bullet}$ such that  
	$\gr^{p}_{F}\cong E_{\infty}^{p,q}$, there is a $k\leq 0$ such that $F^{n}\simeq 0$ for $n\leq k$ and $\colim_{n} F^{n}\simeq \pi_{p+q}(M\otimes_{A}N)$ (see \cite[Var. 7.2.1.15]{HA} for the construction of the Tor-group).
\end{rem}

We also have the notion of a Tor-amplitude for $A$-modules. We will use the homological notation, since it is in line with the definitions given in homotopy theory.

\begin{defi}[\protect{\cite[Def. 2.11]{AG}}]
	Let $M$ be an $A$-module. Then we say that $M$ has \textit{Tor-amplitude (concentrated) in $[a,b]$} for $a\leq b\in \ZZ$ if for all discrete $A$-modules $N$, we have $$\pi_i(M\otimes_A N)= 0$$ for all $i\not\in [a,b]$.\end{defi}

\begin{lem}
	Let $M$ be an $A$-module. Then $M$ has Tor-amplitude in $[a,b]$ if and only if the ordinary complex $M\otimes_A \pi_0A$ in $\Dcal(\pi_0A)$ has Tor-amplitude in $[a,b]$.
\end{lem}
\begin{proof}
	Let $F\colon \MMod_{\pi_{0}A}\rightarrow \MMod_{A}$ be the forgetful functor. This functor comes from the Cartesian fibration of \cite[Cor. 3.4.3.4]{HTT}. In particular, we see that for a discrete $\pi_{0}A$-module $N$ the underlying spectra of $N$ and $F(N)$ are equivalent. So, their homotopy groups are isomorphic, so $F(N)$ is discrete and up to equivalence determined by $\pi_{0}F(N)$ (see \cite[Prop. 7.1.1.13]{HA}). Since $\pi_{0}N$ determines $N$ up to equivalence and $\pi_{0}N\cong \pi_{0} F(N)$, we see that the diagram
	$$
	\begin{tikzcd}
		& N\Mod{\pi_{0}A}\arrow[ld,"i_{1}",hook, swap]\arrow[dr,"i_{2}",hook]&\\
		\MMod_{\pi_{0}A}\arrow[rr,"F"]&&\MMod_{A}
	\end{tikzcd}
	$$
commutes on the level of elements up to equivalence. Therefore, we see that for any $N\in N\Mod{\pi_{0}A}$, we have $$M\otimes_{A}i_{2}(N)\simeq M\otimes_{A}F(i_{1}(N))\simeq M\otimes_{A}\pi_{0}A\otimes_{\pi_{0}A}i_{1}(N)$$ concluding the proof.
\end{proof}

\begin{lem}
	\label{general props of Tor}
	Let $A$ be an animated $R$-algebra. Let $P$ and $Q$ be $A$-modules.
	\begin{enumerate}
		\item If $P$ is perfect, then $P$ has finite Tor-amplitude.
		\item If $B$ is an $A$-algebra and $P$ has Tor-amplitude in $[a,b]$, then the $B$-module $P\otimes_A B$ has Tor-amplitude in $[a,b]$.
		\item If $P$ has Tor-amplitude in $[a,b]$ and $Q$ has Tor-amplitude in $[c,d]$, then $P\otimes_A Q$ has Tor-amplitude in $[a+c,b+d]$.
		\item If $P,Q$ have Tor-amplitude in $[a,b]$, then for any morphism $f\colon P\rightarrow Q$ the fiber of $f$ has Tor-amplitude in $[a-1,b]$ and the cofiber of $f$ has Tor-amplitude in $[a,b+1]$.
		\item If $P$ is a perfect $A$-module with Tor-amplitude in $[0,b]$, with $0\leq b$, then $P$ is connective and $\pi_0P\simeq \pi_0(P\otimes_A \pi_0A)$.
		\item $P$ is perfect and has Tor-amplitude in $[a,a]$ if and only if $P$ is equivalent to $M[a]$ for some finite projective $A$-module.
		\item If $P$ is perfect and has Tor-amplitude in $[a,b]$, then there exists a morphisms 
		$$
		M[a]\rightarrow P
		$$
		such that $M$ is a finite projective $A$-module and the cofiber is perfect with Tor-amplitude in $[a+1,b]$.
	\end{enumerate}
\end{lem}
\begin{proof}
	Since modules over animated rings are defined as modules over their underlying $E_\infty$-ring spectrum, this is \cite[Prop. 2.13]{AG}.
\end{proof}

Let us conclude this section by specifically looking at connective modules over animated rings. These will be given by the animation of classical modules. This illustrates why we work with modules over spectra, as the animation of modules seems natural but produced only \textit{connective} objects.\par 
First let us consider the $\infty$-category $\MMod(\Sp)$ of tuples $(M,A)$, where $A$ is an $E_{\infty}$-R-algebra and $M$ is an $A$-module. This $\infty$-category comes naturally with a cartesian fibration $\MMod(\Sp)\rightarrow \Einfty_{R}$ (see \cite[\S 4.5]{HA} for more details).\par 
Now we can define the $\infty$-category $\SCRMod_R\coloneqq \MMod(\Sp)\times_{\CAlg_{R}(\Sp)}\AniAlg{R}$. Let us denote the full subcategory, consisting of objects $(M,A)\in\SCRMod_R$, where $M$ is connective by $\AniM_R$. The next proposition shows that $\AniM_R$ is the animation of the category of tuples $(A,M)$, where $A$ is an $R$-algebra and $M$ is an $A$-module.

\begin{prop}
	\label{Animod}
	Let $\Ccal\subseteq \AniM_R$ be the full subcategory consisting of objects $(M,A)$, where $A$ is a polynomial $R$-algebra and $M$ is a free $A$-module of finite rank. Let $\Ecal$ be an $\infty$-category, which admits sifted colimits. Let us denote by $\Fun_{\textup{sift}}(\AniM_R,\Ecal)$ the full subcategory of $\Fun(\AniM_R,\Ecal)$ spanned by those functors, which preserve sifted colimits. Then the restriction functor 
	$$
	\Fun_{\textup{sift}}(\AniM_R,\Ecal)\rightarrow \Fun(\Ccal,\Ecal)
	$$
	is an equivalence of $\infty$-categories.
\end{prop}
\begin{proof}
	This is \cite[Cor. 25.2.1.3]{SAG}, but since some references are broken, we recall the proof.\par 
	It suffices to show that $P_{\Sigma}(\Ccal)\simeq \AniM_R$, since then the proposition follows from \cite[Prop. 5.5.8.15]{HTT}, where $P_{\Sigma}(\Ccal)$ denotes those presheaves that preserve finite products.\par 
	The following is \cite[Prop. 25.2.1.2]{SAG} (here the references are broken). Note that $\Ccal$ consists of those objects in $\AniM_R$ which are coproducts of finitely many copies of $C\coloneqq (R[X],0)$ and $D=(R,R)$. We especially see that $C$ and $D$ corepresent the functors $\AniM_R\rightarrow \SS$ given by
	$$
	(A,M)\mapsto \Omega^{\infty}A^{\textup{sp}},\quad (A,M)\mapsto \Omega^{\infty}M
	$$
	respectively. Since both functors preserve sifted colimits, the objects $C$ and $D$ are compact, projective and $\Ccal$ consists of compact projective objects of $\AniM_R$. It follows with \cite[Prop. 5.5.8.22]{HTT}, that the inclusion $\Ccal\hookrightarrow \AniM_R$ extends (see \cite[Prop. 5.5.8.15]{HTT}) to a fully faithful functor $F\colon P_{\Sigma}(\Ccal)\rightarrow \AniM_R$, which commutes with sifted colimits. Since the inclusion preserves finite coproducts, we see that $F$ preserves small colimits (see \cite[Prop. 5.5.8.15]{HTT}) and therefore admits a right adjoint $G$, by the adjoint functor theorem (see \cite[Cor. 5.5.2.9]{HTT}). To prove that $F$ is an equivalence, it suffices to show that $G$ is conservative.\par 
	To see this, note that since $F$ is left adjoint and fully faithful, the unit map $\id\rightarrow GF$ is an equivalence.\par
	That $G$ is conservative is clear, since the conservative functor 
	$$
	\AniM_R\rightarrow \SS\times\SS,\quad (A,M)\mapsto (\Omega^{\infty}A^{\textup{sp}},\Omega^{\infty}M)
	$$
	factors through $G$.
\end{proof}

\begin{notation}
	Again, for an animated ring $A$, we denote $\SCRMod_A\coloneqq \SCRMod\times_{\Ani} \AniAlg{A}$ and $\AniM_A\coloneqq \AniM\times_{\Ani}\AniAlg{A}$.
\end{notation}

\begin{remark}
	The above proposition also shows that the $\infty$-category of simplicial commutative $R$-modules, which is equivalent to $\AniMod{R}$ (the animation of $R$-modules), is equivalent to the connective $R$-modules $\MMod_R^{\cn}$.
\end{remark}

Lastly, we will insert a lemma which will show the uniqueness of the cotangent complex for derived stacks (see Definition \ref{defi.cotangent.global}).

\begin{lem}
	\label{connective yoneda ff}
	Let $j\colon \MMod_A\rightarrow \Pcal(\MMod_A^{\cn,\op})$ be the Yoneda embedding followed by restriction. Then for all $n\geq 0$ the restriction of $j$ to the $(-n)$-connective objects is fully faithful.
\end{lem}
\begin{proof}
	We will not prove this here and refer to \cite[Prop. 1.2.11.3]{TV2}.
\end{proof}

\subsection{The cotangent complex}
\label{sec:cotangent affine}
We will define square zero extensions and the cotangent complex following \cite[\S25]{SAG}.\par 
Proposition \ref{Animod} allows us to define square zero extensions. Namely, if we look at the functor $\Ccal\rightarrow \Ring\simeq (\AniAlg{R})_{\leq 0}\hookrightarrow \AniAlg{R}$, where $\Ccal$ is as in Proposition \ref{Animod}, given by $(M,A)\mapsto A\oplus M$, we see that it induces a functor $\AniM_R\rightarrow \AniAlg{R}$ commuting with sifted colimits. 

\begin{defi}
	For an $A\in\AniAlg{R}$ and a connective $A$-module $M$, we define the \textit{square zero extension of $A$ by $M$} as the image of $(M,A)$ under the functor $\AniM_R\rightarrow \AniAlg{R}$ described above and denote the resulting animated $R$-algebra by $A\oplus M$.
\end{defi}

\begin{rem}
	Since the forgetful functor from $\AniAlg{R}\rightarrow \MMod_{R}$ preserves colimits, we see that the underlying module of $A\oplus M$, for some animated $R$-algebra $A$ and a connective $A$-module $M$, is equivalent to the direct sum in $\MMod_{R}$ of $A$ and $M$.
\end{rem}

\begin{remark}
	Let $A\in \AniAlg{R}$ and $M$ be a connective $A$-module. In $\MMod_A$ we have (up to homotopy) unique maps $0\rightarrow M\rightarrow 0$, these determine maps between animated rings $A\rightarrow A\oplus M\rightarrow A$. Thus, we can view $A\oplus M$ as an element of $(\AniAlg{A})_{/A}$.
\end{remark}

Since we have defined square zero extensions of an animated algebra by a connective module, we can now define the notion of a derivation.

\begin{defi}
	Let $A\in\AniAlg{R}$ and $M\in \MMod^\cn_A$. The \textit{space of $R$-linear derivations $\Der_R(A,M)$ of $A$ into $M$}  is defined as the mapping space $\Hom_{(\AniAlg{R})_{/A}}(A,A\oplus M)$.
\end{defi}

\begin{defi}
\label{defi triv square zero ext}
	Let $A\in\AniAlg{R}$, $M\in \MMod^\cn_A$ and $d\in \Der_R(A,M)$. Then we define $A\oplus_{d} M$ as the pullback of $d\colon A\rightarrow A\oplus M$ and the trivial derivation $s$, i.e. we have a pullback diagram of the form 
	$$
	\begin{tikzcd}
		A\oplus_{d}M\arrow[r,""]\arrow[d,""]& A\arrow[d,"d"]\\
		A\arrow[r,"s"]&A\oplus M.
	\end{tikzcd}
	$$
\end{defi}

Next, we want to define the absolute cotangent complex associated to an aniamted ring $A$. This should be thought of as an $\infty$-analogue of the module of differentials. So, we will characterize it by a universal property.
\begin{propdef}
	Let $A\in\AniAlg{R}$. There is a connective $A$-module $L_A$ and a derivation $\eta\in \Der_R(A,L_A)$ uniquely (up to equivalence) characterized by the property, that for every connective $A$-module $M$ the map
	$$
	\Hom_{\MMod_A}(L_A,M)\rightarrow \Der_R(A,M)
	$$ 
	induced by $\eta$ is an equivalence.\par 
	We call the $A$-module $L_{A/R}$ the \textit{cotangent complex of $A$ over $R$}.\par 
	If $R=\ZZ$, then we write $L_{A}$ and call it the \textit{absolute cotangent complex of $A$}.
\end{propdef}
\begin{proof}
	This is \cite[Prop. 25.3.1.5]{SAG}.
\end{proof}

\begin{rem}
\label{morph on derivations}
	Let $A\rightarrow B$ be a morphism in $\AniAlg{R}$. 
	Then for any connective B-module $M$, we will see that we have a map $\Der_{R}(B,M)\rightarrow \Der_{R}(A,M)$, where we see $M$ as an $A$-module via the forgetful functor. This follows from the following.\par 
	Note that by functoriality, we have a commutative diagram of the form 
	$$
	\begin{tikzcd}
		A\oplus M\arrow[r,""]\arrow[d,""]& A\arrow[d,""]\\
		B\oplus M\arrow[r,""]&B.
	\end{tikzcd}
	$$
	This induces a map $A\oplus M\rightarrow A\times_{B} (B\oplus M)$, which is an equivalence when passing to the underlying $R$-modules, since the underlying $R$-module of $B\oplus M$ is the direct sum of $B$ and $M$. Therefore for an $R$-derivation $d\colon B\rightarrow B\oplus M$, we get the following diagram in $\AniAlg{R}$ with pullback squares
	$$
	\begin{tikzcd}
		A\arrow[r,""]\arrow[d,""]& A\oplus M\arrow[d,""]\arrow[r,""]&A\arrow[d,""]\\
		B\arrow[r,""]&B\oplus M\arrow[r,""]&B,
	\end{tikzcd}
	$$
	where the composition of the horizontal arrows is the identity on $A$, respectively $B$. 
	Therefore, an $R$-derivation of $B$ induces an $R$-derivation of $A$.\par 
 Thus, per definition, we get a map $$\Hom_{B}(L_{B/R},M)\rightarrow\Hom_{A}(L_{A/R},M)\simeq \Hom_{B}(L_{A/R}\otimes_{A}B,M).$$ Where the second map is induced by the adjunction of the forgetful functor and the tensor product. Now Lemma \ref{connective yoneda ff} induces a map $L_{A/R}\otimes_{A}B\rightarrow L_{B/A}$. 
\end{rem}

\begin{defi}
	Let $A\rightarrow B$ be a morphism in $\AniAlg{R}$.	
	Then we define \textit{the (relative) cotangent complex of $B$ over $A$}, denoted by $L_{B/A}$, as the cofiber of the induced map $B\otimes_A L_{A/R}\rightarrow L_{B/R}$.
\end{defi}

\begin{remark}	
\label{rel cotangent rep}
Let $A\rightarrow B$ be a morphism in $\AniAlg{R}$.
	For every connective $B$-module $M$ the definition of $L_{B/A}$ as the cofiber of $L_{A/R}\otimes_{A} B\rightarrow L_{B/R}$ induces an equivalence $$\Hom_{\MMod_B}(L_{B/A},M)\simeq\fib_{d_{0}}(\Der_{R}(B,M)\rightarrow \Der_{R}(A,M)),$$ where $d_{0}$ is the trivial $R$-derivation of $A$ into $M$. Therefore, seeing $B\oplus M$ as an animated $A$-algebra, via the trivial derivation and the natural morphism $A\oplus M\rightarrow B\oplus M$, we have an equivalence 
	$$
	\Hom_{\MMod_B}(L_{B/A},M)\xrightarrow{\sim} \fib_{\id_B}(\Hom_{\AniAlg{A}}(B,B\oplus M)\rightarrow \Hom_{\AniAlg{A}}(B,B)).
	$$\par
	So the relative cotangent complex represents morphisms $B\rightarrow B\oplus M$, with an augmentation $B\oplus M\rightarrow B$, which are not only $R$-linear but in fact also $A$-linear, i.e. $\Hom_{\MMod_{B}}(L_{B/A},M)\simeq \Hom_{(\AniAlg{A})_{/B}}(B,B\oplus M)$.
\end{remark}

\begin{rem}
\label{cotangent bc}
	We claim that the definition of the cotangent complex as the module representing derivations shows that for any pushout diagram of the form
	$$
	\begin{tikzcd}
		A'\arrow[r,""]\arrow[d,""]& B'\arrow[d,""]\\
		A\arrow[r,""]&B
	\end{tikzcd}
	$$
	in $\AniAlg{R}$, we have $L_{B/A}\simeq L_{B'/A'}\otimes_{A'} B$.\par
	Indeed, let $M$ be a $B$-module. Using the arguments of Remark \ref{morph on derivations}, we get a morphism $$\Hom_{(\AniAlg{A})_{/B}}(B,B\oplus M)\rightarrow \Hom_{(\AniAlg{A'})_{/B'}}(B',B'\oplus M).$$  Let $d\in \Hom_{(\AniAlg{A'})_{/B'}}(B',B'\oplus M)$, which is given by a diagram 
	$$
	B'\rightarrow B'\oplus M\rightarrow B'
	$$
	such that the composition is homotopic to the identity on $B'$. By the universal property of the pushout, this induces an $A$-derivation of $B$ into $B\oplus M$. Both constructions are inverse to each other using universal properties, so the morphism $$\Hom_{(\AniAlg{A})_{/B}}(B,B\oplus M)\rightarrow \Hom_{(\AniAlg{A'})_{/B'}}(B',B'\oplus M)$$ is an equivalence and using Remark \ref{rel cotangent rep}, we are done.
\end{rem}

\begin{remark}
	We defined the cotangent complex of an animated $R$-algebra as the $\infty$-analogue of the K\"ahler-differentials. The same construction can be done for $E_\infty$-algebras (see \cite[7.3]{HA}). Thus, we could ask the question whether for a map of animated $R$-algebras $A\rightarrow B$ the relative cotangent complex $L_{B/A}$ is equivalent to the relative cotangent complex of the underlying map of $E_\infty$-algebras $L^\infty_{B/A}\coloneqq L_{\theta(B)/\theta(A)}$. In general, the answer is no, take for example $\ZZ\rightarrow \ZZ[X]$ (since this morphism in not formally smooth if we allow non-connective $E_{\infty}$-rings, this follows from \cite[Prop. 2.4.1.5]{TV2}). \par 
	 But we have an induced morphism $L^\infty_{A/B}\rightarrow L_{B/A}$ and passing to the homotopy groups, we see that the map $\pi_i L^\infty_{A/B}\rightarrow \pi_i L_{B/A}$ is an isomorphism if $i\leq 1$ and surjective for $i=2$ (see \cite[25.3.5.1]{SAG}, note that Lurie calls our cotangent complex \textit{the algebraic cotangent complex}).\par 
	In characteristic $0$ however we have that the map on homotopy groups is an isomorphism for all $i\in\ZZ$ (see \cite[25.3.5.3]{SAG}). This is not surprising, since in characteristic $0$ we have an equivalence of animated rings and connective $E_\infty$-rings. 
\end{remark}

\begin{remark}
\label{rem.sym.cotangent}
	Let $R$ be a ring and $A$ be an animated $R$-algebra. Let $B=\Sym_A(M)$ for some connective $A$-module $M$. We claim that $L_{B/A}$ is given by $M\otimes_A B$.\par
	Indeed, for any connective $B$-module $M'$, we have 
	\begin{align*}
		\Hom_{(\AniAlg{A})_{/B}}(\Sym_{A}(M),A\oplus M') \\ \simeq \fib_{\id_{B}}(\Hom_{\AniAlg{A}}&(\Sym_{A}(M),B\oplus M')\rightarrow \Hom_{\AniAlg{A}}(\Sym_{A}(M),B)).
	\end{align*}
	 Using the adjunction of the forget functor and $\Sym_{A}$, we get a map $\iota\colon M\rightarrow B$ corresponding to the identity on $B$. Further, we have
	\begin{align*}
		\fib_{\id_{B}}(\Hom_{\AniAlg{A}}(\Sym_{A}(M),B\oplus M')\rightarrow \Hom_{\AniAlg{A}}(\Sym_{A}(M),B))\\ \simeq \fib_{\iota}(\Hom_{\MMod_{A}}(M,B\oplus M')\rightarrow \Hom_{\MMod_{A}}(M,B)).
	\end{align*}
	Now the underlying module of $B\oplus M$ is the direct sum of $B$ with $M$ in $\MMod_{A}$. Therefore, this fiber is equivalent to $\Hom_{\MMod_{A}}(M,M')\simeq \Hom_{\MMod_{B}}(M\otimes_{A} B,M')$. Hence, by the universal property the cotangent complex $L_{B/A}$ is given by $M\otimes_A B$.
\end{remark}

\begin{prop}
	\label{affine cotangent complex}
	Let $A$ be an animated $R$-algebra and write $A$ as the sifted colimit of polynomial $R$-algebras $P^\bullet$. Then we have  $L_{A/R}\simeq \colim L_{P^\bullet/R}\otimes_{P^\bullet} A$.
\end{prop}
\begin{proof}
	This is \cite[Lec. 5 Thm. 2.3]{Khan}. For the convenience of the reader, we recall the proof.\par 
	Note that each $P^i$ is equivalent to $\Sym(R^{n_{i}})$ for some $n_{i}\in \NN$.
	Also by Remark \ref{rem.sym.cotangent}, we have $\colim L_{P^\bullet/R}\otimes_{P^\bullet} A \simeq \colim R^\bullet\otimes_{R} P^\bullet \otimes_{P^\bullet} A \simeq \colim A^\bullet$, where $A^\bullet\coloneqq R^\bullet\otimes_R A.$ Thus, we have to show that 
	$$
	\lim\Hom_{\MMod_A}(A^\bullet,M)\simeq \lim\fib_{\iota_\bullet}(\Hom_{\AniAlg{R}}(P^\bullet,A\oplus M)\rightarrow \Hom_{\AniAlg{R}}(P^\bullet,A))
	$$
	for all connective $A$-modules $M$, where $\iota_\bullet\colon P^\bullet\rightarrow A$ are the natural maps. But now we will see that both sides can be identified with $\lim \Omega^{\infty}(M)^\bullet$, so we conclude the equivalence.
	\par 
	Indeed, each $A^{n_{i}}$ is free this is clear for the left hand side and for the right hand side note that $P^\bullet$ is given by symmetric algebras of free algebras and conclude using the above.
\end{proof}

\begin{remark}
	Note that the proof of Proposition \ref{affine cotangent complex} shows that for a discrete ring $R$, the cotangent complex $L_{B/R}$ for some $B\in\AniAlg{R}$ agrees with the left Kan extension of $\Omega^{1}_{-/R}\colon\Poly_R\rightarrow \Dcal(R)$ along the inclusion $\Poly_{R}\hookrightarrow\AniAlg{R}$. So in particular, our definition agrees with the classical definition in the discrete case\footnote{Let  $L\Omega^{1}_{-/R}$ denote the left Kan extension of $\Omega^{1}_{-/R}\colon\Poly_R\rightarrow \Dcal(R)$ along the inclusion $\Poly_{R}\hookrightarrow\AniAlg{R}$. Then $L\Omega^{1}_{-/R}$ factors through $\AniM_R$, since $\Omega^{1}_{A/R}$ has an $A$-module structure for some polynomial $R$-algebra $A$. Thus, we see that $L\Omega^{1}_{B/R}$ has a natural $B$-module structure and on polynomial algebras agrees with $L_{B/R}$ (note that this a priori proves the equivalence in $\Dcal(R)$ and thus in $\Sp$ but the forget functor $\AniM_R\rightarrow \Sp$ is conservative by \cite[Cor. 4.2.3.2]{HA}). }.\par 
Setting $R=\ZZ$, we get an analogous description of $L_B.$
\end{remark}

We add a little lemma showing that the steps in the Postnikov towers are square-zero extensions. For $E_\infty$-algebras this can be found in \cite[Cor. 7.4.1.28]{HA}. We will not prove this lemma, since it would be to involved. But a detailed (model categorical) proof can be found in the notes of Porta and Vezzosi \cite{PV}.

\begin{lem}
	\label{postnikov square zero}
	Let $A$ be an animated $R$-algebra. There exists a unique derivation $$d\in\pi_0\Der(A_{\leq n-1},\pi_n(A)[n+1])$$ such that the projection
	$A_{\leq n-1}\oplus_{d}\pi_n(A)[n+1]\rightarrow A_{\leq n-1}$ is equivalent to the natural morphism $A_{\leq n}\rightarrow A_{\leq n-1}$ (recall the notation from Definition \ref{defi triv square zero ext}). 
\end{lem}
\begin{proof}
	\cite[Lem. 2.2.1.1]{TV2}.
\end{proof}

\subsection{Smooth and \'etale morphisms}
\label{sec:smooth and et}
In this section, we will follow \cite{TV2}. In the reference To\"en and Vezzosi deal with animated rings and derived algebraic geometry (in our sense) in the model categorical setting. Most definitions however are made in such a way, such that we can easily translate them to the $\infty$-categorical setting (for more explanation on how to go from model categories to $\infty$-categories, we recommend \cite{HTT} and \cite{HA}).

\begin{defi}
	A morphism $f\colon A\rightarrow B$ of animated $R$-algebras is called \textit{flat (resp. faithfully flat, smooth, \'etale)} if the following two conditions are satisfied
	\begin{enumerate}
		\item[(i)] the induced ring homomorphism $\pi_0f\colon\pi_0A\rightarrow\pi_0B$ is flat (resp. faithfully flat, smooth, \'etale), and 
		\item[(ii)] we have an isomorphism $\pi_*A\otimes_{\pi_0A}\pi_0B\rightarrow \pi_* B$ of graded rings.
	\end{enumerate}\par 
\end{defi}

Note that in the above definition $\pi_{0}f$ is a morphism of commutative rings, so asking whether they are flat, \'etale or smooth is natural. The condition (ii) on the homotopy groups is a natural compatibility condition that assures that homotopy groups and base change commute in the sense that $\pi_{n}M\otimes_{\pi_{0}A}\pi_{0}B\cong \pi_{n}(M\otimes_{A}B)$ for an $A$-module $M$ and a flat animated $A$-algebra $B$. (see \cite[Prop. 7.2.2.13]{HA}).
\newline\par

Let $f\colon A\rightarrow B$ be a homomorphism of rings. If $f$ is smooth, we know that the module of differentials $\Omega_{B/A}$ is finite projective. The other direction is in general not correct, i.e. there are ring homomorphisms locally of finite presentation with finite projective module of differentials that are not smooth. One example is the projection $k[X]\rightarrow k[X]/(X)\cong k$ for a field $k$. Its module of differentials $\Omega_{k/k[X]}$ vanishes, so is in particular a finite dimensional $k$-vector space. But the projection is not smooth. One way to see this, is that the cotangent complex $L_{k/k[X]}$ is quasi-isomorphic to $(X)/(X^{2})[-1]$\footnote{As the projection $k[X]\rightarrow k$ is a regular immersion, we can use \cite[08SJ]{stacks-project}.} (see \cite[07BU]{stacks-project}). This condition on the cotangent complex comes rather naturally in derived algebraic geometry, as we can see the cotangent complex as the derived version of the module of differentials. In fact, we will make this explicit in Proposition \ref{smooth cotangent} to see that a morphism of animated rings is smooth if and only if it is locally of finite presentation and its cotangent complex is finite projective.\par 
But for technical reasons, we need to understand the cotangent complex of the natural maps $A_{\leq k}\rightarrow A_{\leq k-1}$. As these are given by square zero extensions by $\pi_{k}A$ (see Lemma \ref{postnikov square zero}) and isomorphisms on $\pi_{i}$ for $i\leq k-1$, we will see that the cotangent complex is easier to understand.

\begin{lem}
	\label{cotangent of truncation}
	Let $A$ be an animated $R$-algebra and $k\geq 1$. Then there exists natural isomorphisms 
	$$
	\pi_{k+1}L_{A_{\leq k-1}/A_{\leq k}} \cong \pi_k A,
	$$
	and $\pi_i L_{A_{\leq k-1}/A_{\leq k}} \cong 0$ for $i\leq k$ (recall Notation \ref{notation truncation} for the $A_{\leq *}$).
\end{lem}
\begin{proof}
	This is \cite[Lem. 2.2.2.8]{TV2} translated to $\infty$-categories. \par
	We can also deduce this lemma using \cite{HA} and \cite{SAG}. Note that the fiber of $A_{\leq k}\rightarrow A_{\leq k-1}$ as an $A$-module is given by $\pi_kA[k]$ and thus the cofiber is given by $\pi_kA[k+1]$. Now the first assertion follows from \cite[Rem. 25.3.6.5]{SAG}. For the vanishing of the lower homotopy groups, note that for $i\leq k+2$, we have $\pi_i L^{\infty}_{A_{\leq k-1}/A_{\leq k}}\cong \pi_i L_{A_{\leq k-1}/A_{\leq k}}$ by \cite[Prop. 25.3.5.1]{SAG}, where $L^{\infty}_{A_{\leq k-1}/A_{\leq k}}$ is the cotangent complex associated to the underlying $E_{\infty}$-algebra of $A_{\leq k-1}\rightarrow A_{\leq k}$ (see \cite[\S 7.3]{HA} for more details). Thus, the vanishing follows with \cite[Lem. 7.4.3.17]{HA}. 
\end{proof}

In the following proof, we want to compute $$\Tor_*^{\pi_\ast A}(\pi_0A,\pi_\ast M)$$ for some $A\in\AniAlg{R}$ and an $A$-module $M$. The Tor-spectral sequence will be an important tool when calculating tensor products. But first lets talk about graded free resolutions, which compute the Tor-groups.

\begin{remark}[Graded free resolutions]
	\label{graded free res}
	Let $A\in\AniAlg{R}$ and fix an $A$-module $M$ such that there is an $n\geq 0$ such that $\pi_kM=0$ for all $k< n$. To compute the graded Tor group, we need a graded free resolution of $\pi_*M$, where a graded free resolution is an exact sequence $\dots\rightarrow P_1\rightarrow P_0\rightarrow \pi_*M\rightarrow 0$ of graded $\pi_*A$-modules with $P_i$ equal to the direct sum of shifts of $\pi_*A$ (see \cite[09KK]{stacks-project} for details on graded free resolutions). We can follow the existence of such a sequence inductively with \cite[09KN]{stacks-project}. Namely, set $M_0\coloneqq \pi_*M$, then for any $i$, we can find a short exact sequence of the form $0\rightarrow M_{i+1}\rightarrow P_i\rightarrow M_i\rightarrow 0$, such that $P_i$ is the direct sum of shifts of $\pi_*A$ and is concentrated in degrees $\geq n$.
	\par 
	The $P_i$ are constructed as follows, for any $m\in (M_i)_k$, we can find a $\pi_*A$-linear map $ \pi_*(A)[k]\rightarrow M_i$ sending $1\in  \pi_*(A)[k]_k$ to $m$ and $d(1)$ to $d(m)$. The module $P_i$ is now defined as the direct sum over all non-zero degrees and corresponding elements. Therefore by construction $P_i$ is a direct sum of shifts of $\pi_*A$ and further by induction, we see that if $M_0$ is concentrated in degrees $\geq n$, then for all $i\geq 0$ the module $P_i$ is concentrated in degrees $\geq n$.
\end{remark}

\begin{prop}
	\label{smooth cotangent}
	Let $f\colon A\rightarrow B$ be a morphism in $\AniAlg{R}$. 
	\begin{enumerate}
		\item The morphism $f$ is smooth if and only if the $B$-module $L_{B/A}$ is finite projective and $\pi_0B$ is of finite presentation over $\pi_0A$.
		\item The morphism $f$ is \'etale if and only if $L_{B/A}\simeq 0$ and $\pi_0B$ is of finite presentation over $\pi_0A$.
	\end{enumerate}	
\end{prop}
\begin{proof}
	This is \cite[Thm. 2.2.2.6]{TV2} in the $\infty$-categorical setting. For the convenience of the reader, we recall the proof of the first assertion. The proof of the second assertion is left out and can be reconstructed following the proof of \cite[Thm. 2.2.2.6]{TV2}.\par 
	Let $f$ be a smooth morphism, then it is flat by assumption, thus $\pi_0B= \pi_0A \otimes_A B$ (by \cite[Prop. 7.2.2.15]{HA} $B\otimes_A\pi_0A$ has to be discrete and since on $\pi_0$ it is an equivalence, we have the equivalence on the level of animated rings). In particular, we have  $$\pi_0 L_{B/A} = \pi_0(L_{B/A} \otimes_B \pi_0 B) = \pi_0 L_{\pi_0B/\pi_0A} = \Omega_{\pi_0B/\pi_0A}[0],$$ by compatibility of the cotangent complex with base change (see Remark \ref{cotangent bc}). Since $f$ is smooth, we see that $\pi_0(L_{B/A})=\Omega_{\pi_0B/\pi_0A}[0]$ is finite projective over $\pi_0B$. By Proposition \ref{projective lift}, there is a projective $B$-module $P$ with $P\otimes_B\pi_0B\simeq \pi_0L_{B/A}$ (thus $P$ is in fact finite projective). Using the projectivity of $P$, we lift the natural projection $P\rightarrow \pi_0L_{B/A}$ to a morphism $\phi\colon P\rightarrow L_{B/A}$ (see \cite[Prop. 7.2.2.6]{HA} and note that surjectivity of $\phi$ on $\pi_0$ implies that the fiber is connective). We want to show that $\phi$ is in fact an equivalence. For this it is enough to show $\cofib(\phi)\simeq 0$. By construction, it is clear that $\pi_0 \cofib(\phi)= 0$ and we will show by induction on $n$ that $\pi_n \cofib(\phi) =0$.\par
	To see this, let $n> 0$, assume $\pi_k\cofib(\phi)= 0$ for $k<n$ and consider the following Tor spectral sequence (see Remark \ref{Tor-ss})
	$$
	E_2^{p,q}=\Tor_p^{\pi_\ast B}(\pi_0B,\pi_\ast \cofib(\phi))_q\Rightarrow \pi_{p+q}(\pi_0B\otimes_B \cofib(\phi))= 0.
	$$
	To see that $\pi_0B\otimes_B \cofib(\phi)\simeq 0$, note that $\phi\otimes\id_{\pi_0B}$ is equivalent to the identity on $L_{\pi_0B/\pi_0A}$ and thus the cofiber of $\phi\otimes\id_{\pi_0B}$ which is $\pi_0B\otimes_B \cofib(\phi)$ vanishes.
	Let $P_\bullet$ be the graded free resolution of $\pi_*\cofib(\phi)$ constructed in Remark \ref{graded free res}. Then $E^{p,q}_2 = H^{-p}(\pi_0B\otimes_{\pi_*B}P_\bullet)_q = 0$ for $q< n$, since it is a subgroup of a quotient of  $(\pi_0B\otimes_{\pi_*B}P_p)_q$, which itself is a quotient of $(P_p)_q$.  Therefore, $\pi_n \cofib(\phi)\simeq H^0(\pi_0B\otimes_{\pi_*B}P_\bullet)_n = E^{0,n}_2\simeq \pi_n(\pi_0B\otimes_B  \cofib(\phi))\simeq 0.$\par 
	To see the first equivalence note that $H^0(\pi_0B\otimes_{\pi_*B}P_\bullet) \simeq  \pi_0B\otimes_{\pi_*B} H^0(P_\bullet)\simeq \pi_0B\otimes_{\pi_*B}\pi_*\cofib(\phi) \simeq \pi_*\cofib(\phi)$ (see \cite[09LL]{stacks-project} for the definition of the tensor product of dg-modules and note that taking  the $0$th-cohomology is just taking a cokernel which commutes with tensor products).
	\par 
	Now assume that $L_{B/A}$ is finite projective and $\pi_0A\rightarrow \pi_0B$ is of finite presentation. Consider the pushout square
	$$
	\begin{tikzcd}
	A\arrow[r,""]\arrow[d,""]&B\arrow[d,""]\\
	\pi_0A\arrow[r,""]&C.
	\end{tikzcd}
	$$
	We want to show that the natural morphism $C\rightarrow \pi_0C\simeq \pi_0B$ is an equivalence. For this assume there is a smallest integer $i>0$ such that $\pi_iC \not =0$. We get a fiber sequence 
	$$L_{C_{\leq i}/\pi_0 A}\otimes_{C_{\leq i}}\pi_0C\rightarrow L_{\pi_0C/\pi_0A}\rightarrow L_{\pi_0C/C_{\leq i}},$$
	using Lemma \ref{cotangent of truncation} (actually its proof), we see that
	$$\pi_i(L_{C/\pi_0 A}\otimes_{C}\pi_0C) \simeq \pi_i(L_{C_{\leq i}/\pi_0 A}\otimes_{C_{\leq i}}\pi_0C)\simeq \pi_{i+1}(L_{\pi_0C/C_{\leq i}})\simeq \pi_i(C).$$ But $L_{C/\pi_0 A}\otimes_{C}\pi_0C$ is projective over $\pi_0 C$ and thus discrete which is a contradiction. Therefore, we see that $\pi_0B\simeq \pi_0A\otimes_A B$ and thus $L_{\pi_0B/\pi_0A}\simeq L_{B/A}\otimes_{B}\pi_0B$ is discrete. Hence, with \cite[07BU]{stacks-project}, we see that $\pi_0A\rightarrow \pi_0B$ is smooth. \par 
	It remains to show that the natural map $\phi\colon\pi_nA\otimes_{\pi_0A}\pi_0B\rightarrow \pi_nB$ is an equivalence. But this follows from $B\otimes_A \pi_0A \simeq \pi_0B$ and \cite[Thm. 7.2.2.15]{HA} (we have to test that $B\otimes_A M$ is discrete for all discrete $A$-modules $M$, but since $\pi_0B$ is a flat $\pi_0A$-module, we see that $B\otimes_A M\simeq B\otimes_A \pi_0A\otimes_{\pi_0A}M\simeq \pi_0B\otimes_{\pi_0A}M$ is discrete).
%
\end{proof}

The following proposition is in a similar fashion to Proposition \ref{smooth cotangent}. Namely, we can characterize finitely presented morphisms by their cotangent complex and their behavior on the underlying discrete rings (i.e. on $\pi_{0}$).

\begin{prop}
	\label{lfp perfect cotangent}
	Let $f\colon A\rightarrow B$ be a morphism in $\AniAlg{R}$. 
	Then $f$ is locally of finite presentation if and only if the $B$-module $L_{B/A}$ is perfect and $\pi_0B$ is of finite presentation over $\pi_0A$\end{prop}
\begin{proof}
	We will not prove this and refer to \cite[Prop. 2.2.2.4]{TV2} or \cite[Prop. 3.2.18]{DAG}.
\end{proof}

\begin{cor}
\label{smooth imp lfp}
	Let $f\colon A\rightarrow B$ be a smooth morphism of animated $R$-algebras. Then $f$ is locally of finite presentation.
\end{cor}
\begin{proof}
	Combine Proposition \ref{lfp perfect cotangent} and \ref{smooth cotangent}.
\end{proof}

One other important fact is that for an animated ring $A$, we can lift truncated \'etale maps $B\rightarrow \pi_0A$ to \'etale maps $\widetilde{B}\rightarrow A$. The idea is to use Postnikov towers and the compatibility of cotangent complexes and truncations.

\begin{prop}
	\label{lift etale}
	Let $A$ be an animated $R$-algebra. Then the base change under the natural morphism $A\rightarrow \pi_0A$ induces a equivalence $\infty$-categories of \'etale $A$-algebras and \'etale $\pi_0A$-algebras.
\end{prop}
\begin{proof}
See \cite[Prop. 5.2.3]{CS}.
\end{proof}

Lastly, we can use Proposition \ref{lift etale} to show that any finite projective module $P$ over an animated ring $A$ is finite locally free. 

\begin{cor}
	\label{finite proj = locally free}
	Let $A$ be an animated ring and let $P$ be a finite projective module over $A$. Then there is a finite \'etale cover $(A\rightarrow A_{i})_{i\in I}$, i.e. the $A_i$ are \'etale $A$-algebras, where $I$ is finite and $A\rightarrow \prod_{i\in I}A_{i}$ is faithfully flat, such that $P\otimes_A A_i$ is free of finite rank, i.e. $P\otimes_A A_i\simeq A_i^r$ for some $r\in \NN$.
\end{cor}
\begin{proof}
	Let $\Proj(A)$ denote the full subcategory of $\MMod_A$ of projective modules. We have an equivalence of categories $\textup{h}\Proj(A)\rightarrow \textup{h}\Proj(\pi_0A)\simeq\Proj_{\pi_0A}$ given by the tensor product (see \cite[Cor 7.2.2.19]{HA}). By definition, this restricts to an equivalence of finite projective modules. Since classical finite projective modules on $\pi_0A$ are finite locally free, we know that there exists an open cover $(\widetilde{A}_i)$ of $\pi_0A$ such that $\widetilde{A}_i\otimes_{\pi_0A}\pi_0A\otimes_AP$ is equivalent to some finite free $\widetilde{A}_i$-module. By Proposition \ref{lift etale}, we can lift this open cover to an \'etale cover $A_i$ of $A$ (certainly $A\rightarrow \prod_{i\in I}A_{i}$ is \'etale and faithfully flatness can be checked on $\pi_{0}$). Since $A_i\otimes_A \pi_0A =\widetilde{A}_i$, the equivalence of the categories involved shows the claim.
\end{proof}


\section{Derived algebraic geometry}
\label{sec:der.alg.geo}

For this section, we will closely follow \cite[\S2]{TV2}, \cite{AG} and the lecture notes of Adeel Khan \cite{Khan}.\par  In \cite{TV2} To\"en and Vezzosi deal with derived algebraic geometry in the model categorical setting. This allows us to translate them easily to the $\infty$-categorical setting.\par 
In \cite{AG} Antieu and Gepner deal with spectral algebraic geometry (in the $\infty$-categorical setting). The ideas are more or less the same as we use them and in some parts, we can transport proofs one-to-one. But it is important to note that there is no higher principle which concludes derived algebraic geometry (theory of certain sheaves on animated algebras) as a corollary of spectral algebraic geometry (theory of certain sheaves on $E_{\infty}$-algebras). This is because there is no fully faithful embedding from animated algebras to $E_\infty$-algebras in general. In characteristic $0$ both $\infty$-categories are equivalent and thus the results from \cite{TV2} and \cite{AG} agree. In characteristic $p>0$ however, we have no such relation. Therefore the translation of \cite{AG} to our setting has to be treated with caution.

\subsection{Affine derived schemes}
\label{sec:affine derived schemes}
In the following $R$ will be a ring and $A$ an animated $R$-algebra.\par
Let us define the \'etale and fpqc topology.

\begin{propdef}
	Let $B$ an animated $A$-algebra. 
	\begin{enumerate}
		\item[(a)] There exists a Grothendieck topology on $\AniAlg{A}^{\op}$, called the \textup{fpqc-topology}, which can be described as follows:  A sieve (see \cite[Def. 6.2.2.1]{HTT}) $\Ccal\subseteq (\AniAlg{A}^{op})_{/B}\simeq\AniAlg{B}^{op}$ is a covering sieve if and only if it contains a finite family $(B\rightarrow B_{i})_{i\in I}$ for which the induced map $B\rightarrow \prod_{i\in I}B_{i}$ is faithfully flat.
		\item[(b)] There exists a Grothendieck topology on the full subcategory $(\AniAlg{A}^{\et})^{op}$ of \'etale $A$-algebras, called the \textup{\'etale-topology}, which can be described as follows:  A sieve $\Ccal\subseteq (\AniAlg{A}^{\et})^{\op}_{/B}\simeq(\AniAlg{B}^{\et})^{\op}$ is a covering sieve if and only if it contains a finite family $(B\rightarrow B_{i})_{i\in I}$ for which the induced map $A\rightarrow \prod_{i\in I}B_{i}$ is faithfully flat (and \'etale, which is automatic).
	\end{enumerate}
\end{propdef}
\begin{proof}
	Let $S$ be the collection of all faithfully flat (resp. faithfully flat and \'etale) morphisms in $\AniAlg{R}$. It is enough to check that $S$ satisfies the properties of \cite[Prop. A.3.2.1]{SAG}. For this, we note that a morphism of animated rings is faithfully flat (resp. faithfully flat and \'etale) if and only if it is after passage to connective $E_{\infty}$-rings. Since the functor $\theta\colon \AniAlg{A}\rightarrow \Einftycn_{\theta(A)}$ is conservative and commutes with limits and colimits (see Proposition \ref{fun E to SCR}), we see that $S$ satisfies the properties of \cite[Prop. A.3.2.1]{SAG} if and only if the collection of all faithfully flat (resp. faithfully flat and \'etale) morphisms in $\Einftycn_{\theta(A)}$ does so. But this follows from \cite[Prop. B.6.1.3]{SAG} - see also \cite[Var. B.6.1.7]{SAG} - (resp. \cite[Prop. B.6.2.1]{SAG}). 
\end{proof}

\begin{defrem}
	An \textit{affine derived scheme over $A$} is a functor from $\AniAlg{A}$ to spaces (i.e. a presheaf on $\AniAlg{A}^{\op}$), which is equivalent to $\Spec(B)\coloneqq \Hom_{\AniAlg{A}}(B,-)$ for some $B\in\AniAlg{A}$.\par 
	Note that $\Spec(B)$ is an fpqc sheaf by \cite[D. 6.3.5]{SAG} and Lemma \ref{fun E to SCR}
\end{defrem}

\begin{defi}
	Let $\Pbf$ be one of the following properties of a morphism animated rings: \textit{flat, smooth, \'etale, locally of finite presentation}. We say that a morphisms of affine derived schemes $\Spec(B)\rightarrow \Spec(C)$ has property $\Pbf$ if the underlying homomorphism $C\rightarrow B$ has $\Pbf$.
\end{defi}

\begin{rem}
	Let us remark that the above properties of morphisms of affine derived schemes are stable under equivalences, composition, pullbacks and are \'etale local on the source and target. This follows from classical theory (as found for example in \cite{stacks-project}) and Propositions \ref{smooth cotangent} and \ref{lfp perfect cotangent}, noting Proposition \ref{projective lift} and that (perfect) modules modules satisfy descent (see Remark \ref{perf fpqc local}).
\end{rem}

\begin{defi}
	For a discrete ring $A$, we set $$\Spec(A)_{\cl}\coloneqq \Hom_{\Ring}(A,-)\colon \Ring\rightarrow \Sets$$ to be its underlying classical scheme. We will abuse notation and denote the underlying locally ringed space of $\Spec(A)_{\cl}$ the same.
\end{defi}

\begin{remark}
The notation $(-)_{\cl}$ is introduced since even for a discrete ring $A$ the corresponding derived stack $\Spec(A)$ is a sheaf with values in spaces. Thus, for a (possibly non discrete) animated ring $B$ the space $\Hom_{\Ani}(A,B)$ need not to be discrete, e.g. $\Hom_{\Ani}(\ZZ[X],B)\simeq \Omega^\infty B$. But for example if we restrict ourself to discrete rings $C$, we have $\Hom_{\Ani}(A,C)\simeq \Hom_{\Ring}(\pi_0A,C)$ by adjunction, even when $A$ is not discrete.
\end{remark}

\subsection{Geometric stacks}\label{sec:geometric-stacks}

In this section we closely follow \cite{AG} and \cite{TV2}. In \cite{TV2} the notion of geometric stacks can be found in the context of model categories. Our notion agrees with the notions presented in \cite{TV2} (note that they speak of $n$-representable morphisms rather than $n$-geometric) and we want to remark that the definition of geometric stacks in \cite{AG} is different from ours. The main point are the $0$-geometric stacks. In our case, any scheme will be $1$-geometric, whereas in \cite{AG} a qcqs scheme with non-affine diagonal will not be $1$-geometric. Nevertheless, as the principal of the definition is analogous, the ideas presented in \cite{AG} often times agree with the ideas presented in \cite{TV2}.\par 

\begin{defi}
	Let $A$ be an animated ring. A \textit{derived stack over $A$} is a sheaf of spaces on $(\AniAlg{A}^{\et})^{\op}$.
	We denote the $\infty$-category of derived stacks over $A$ by $\dSt_A$. 
	If $A=\ZZ$, we simply say derived stack and denote the $\infty$-category of derived stacks by $\dSt$.
\end{defi}

\begin{rem}
\label{sheaf localization}
Let us remark, that this definition makes sense if we do not assume that $\AniAlg{A}$ for any animated ring $A$ is small. The reason for this is that a priori the full subcategory of derived stacks in $\Pcal((\AniAlg{A}^{\et})^{\op})$ is defined as the full subcategory of $S$-local objects, where $S$ consists of monomorphisms $U\hookrightarrow \Spec(A)$ that come from a covering sieve in $(\AniAlg{A}^{\et})^{\op}$. But if we assume smallness of $\AniAlg{A}$ one sees that $\dSt$ is a topological localization of $\Pcal((\AniAlg{A}^{\et})^{\op})$ (see \cite[Prop. 6.2.2.7]{HTT}) which can come quite handy for example in Remark \ref{rem.sheaf.kan}, where we use \cite[Prop. 5.5.4.2]{HTT}.
\end{rem}

\begin{defi}[\protect{\cite[\S6.2.3]{HTT}}]
Let $A$ be an animated ring. A morphism $f\colon X\rightarrow Y$ in $\dSt_{A}$ is an \textit{effective epimorphism} if the natural map $\colim_{\Delta}\Cv(X/Y)_{\bullet}\rightarrow Y$ in $\Pcal((\AniAlg{A}^{\et})^{\op})$ is an equivalence in $\dSt_{A}$ after sheafifcation\footnote{\label{foot.sheafi}We can describe $\dSt_{A}$ as a localization of $\Pcal(\AniAlg{A}^{\op})$ (see \cite[\S 6.2.2]{HTT}), so we get a functor $L\colon \Pcal(\AniAlg{A}^{\op})\rightarrow \dSt_{A}$ left adjoint to the inclusion, which we denote as sheafification.}.
\end{defi}

\begin{remark}[Effective epimorphisms]
	\label{effective epi}
	Let $f\colon \Spec(B)\rightarrow \Spec(A)$ be a morphism of affine derived schemes. By \cite[Prop. 7.2.1.14]{HTT} the map $f$ is an effective epimorphism in $\dSt$ if and only if its $0$-truncation, in the sense of \cite[\S 5.5.6]{HTT}, is an effective epimorphism. Up to homotopy $\tau_{\leq 0}\Spec(A)$ is given by the sheafification of $\pi_0\Hom_{\Ani}(A,-)$ (see \cite[Prop. 5.5.6.28]{HTT} together with Remark \ref{sheaf localization}). Thus $f$ is an effective epimorphism if and only if the sheafification of the induced map $\pi_0f\colon\pi_0\Hom(B,-)\rightarrow \pi_0\Hom(A,-)$ is an epimorphism (note that a priori \cite[Prop. 7.2.1.14]{HTT} tells us that $\pi_0f^{\sim}$ needs to be an effective epimorphism but in a $1$-topos every epimorphism is effective (see \cite[IV 7. Thm. 8]{MLM})).\par 
	We will see in Remark \ref{truncation} that the restriction of an affine derived scheme $\Spec(A)$ onto $\Ring$ preserves limits and colimits. In particular, any effective epimorphism of affine derived schemes $\Spec(A)\rightarrow \Spec(B)$ induces by adjunction an epimorphism of \'etale sheaves of sets $\Spec(\pi_{0}A)_{\cl}\rightarrow \Spec(\pi_{0}B)_{\cl}$, which in turn implies that the morphism of the underlying topological spaces of affine schemes, denoted by $|\Spec(\pi_{0}A)_{\cl}|\rightarrow|\Spec(\pi_{0}B)_{\cl}|$, is surjective. So for example if $B\rightarrow A$ is flat, then the effective surjectivity implies that it is in fact faithfully flat.
\end{remark}

\begin{defi}[\protect{\cite[Def. 1.3.3.1]{TV2}}]
	We will define a geometric morphism inductively.
	\begin{enumerate}
		\item[(1)] A derived stack is \textit{$(-1)$-geometric} or \textit{affine} if it is equivalent to an affine derived scheme.\par 
		A morphism of derived stacks $X\rightarrow Y$ is \textit{$(-1)$-geometric} or \textit{affine} if for all affine schemes $\Spec(A)$ and all $\Spec(A)\rightarrow Y$ the base change $X\times_{Y}\Spec(A)$ is affine.\par 
		A $(-1)$-geometric morphism of derived stacks $X\rightarrow Y$ is \textit{smooth} (resp. \textit{\'etale}) if for all affine derived schemes $\Spec(A)$ and all morphisms $\Spec(A)\rightarrow Y$ the base change morphism $\Spec(B)\simeq X\times_Y\Spec(A)\rightarrow \Spec(A)$ corresponds to a smooth (resp. \'etale) morphism of animated rings.\\ \par 
		Now let $n\geq 0$.
		\item[(2)] An \textit{$n$-atlas} of a derived stack $X$ is a family $(\Spec(A_i)\rightarrow X)_{i\in I}$ of morphisms of derived stacks, such that
		\begin{enumerate}
			\item[(a)] each $\Spec(A_i)\rightarrow X$ is $(n-1)$-geometric and smooth, and
			\item[(b)] the induced morphism $\coprod \Spec(A_i)\rightarrow X$ is an effective epimorphism.
		\end{enumerate}
		If each of the morphisms $\Spec(A_{i})\rightarrow X$ is \'etale, then we call the $n$-atlas \textit{\'etale}.
		\par
		A derived stack is called \textit{$n$-geometric} (resp. \textit{$n$-DM}), if 
		\begin{enumerate}
			\item[(a)] it has an $n$-atlas (resp. \'etale $n$-atlas), and
			\item[(b)] the diagonal $X\xrightarrow{\Delta}X\times X$ is $(n-1)$-geometric.
		\end{enumerate}
		\item[(3)] A morphism $X\rightarrow Y$ of derived stacks is called \textit{$n$-geometric} (resp. \textit{$n$-DM}) if for all affine derived scheme $\Spec(A)$ and all morphisms $\Spec(A)\rightarrow Y$ the base change $X\times_Y \Spec(A)$ is $n$-geometric (resp. \textit{$n$-DM}).\par 
		An $n$-geometric morphism $X\rightarrow Y$ of derived stacks is called \textit{smooth}, if for all affine derived scheme $\Spec(A)$ and all morphisms $\Spec(A)\rightarrow Y$ the base change $X\times_Y \Spec(A)$ has an $n$-atlas given by a family of affine derived schemes $(\Spec(A_i))_{i\in I}$, such that the induced morphism $A\rightarrow A_i$ is smooth.\par
		An $n$-DM morphism $X\rightarrow Y$ of derived stacks is called \textit{\'etale}, if for all affine derived scheme $\Spec(A)$ and all morphisms $\Spec(A)\rightarrow Y$ the base change $X\times_Y \Spec(A)$ has an \'etale $n$-atlas given by a family of affine derived schemes $(\Spec(A_i))_{i\in I}$, such that the induced morphism $A\rightarrow A_i$ is \'etale.\par 
	\end{enumerate}
	We call a morphism of derived stacks \textit{geometric} (resp. \textit{DM}) if it is $n$-geometric (resp. $n$-DM) for some $n\geq -1$.
\end{defi}

\begin{remark}
	From this definition one can see that an $n$-geometric morphism of derived stacks is automatically $(n+1)$-geometric. 
\end{remark}


\begin{defi}
	\label{def P-geometric}
	Let $\Pbf$ be a property of affine derived schemes that is stable under equivalences, pullbacks, compositions and is smooth-local on the source and target, then we say a morphism of derived stacks $X\rightarrow Y$ has $\Pbf$ if it is geometric and for an affine $(n-1)$-atlas $(U_i)_{i\in I}$ of the pullback along an affine derived scheme $\Spec(B)$ the corresponding morphism $U_i\rightarrow \Spec(B)$ of affine schemes has $\Pbf$.
\end{defi}

\begin{lem}
	The properties ``locally of finite presentation'', ``flat'' and ``smooth'' of morphisms of affine derived schemes satisfy the conditions of Definition \ref{def P-geometric}.
\end{lem}
\begin{proof}
	For flat and smooth this follows from the definition (note that on $\pi_{0}$ this follows from classical theory and since smooth covers are in particular flat and on $\pi_{0}$ faithfully flat, we see that the compatibility of the higher homotopy groups in the definition of smooth and flat morphisms is also clear).\par
	For locally of finite presentation, we use Proposition \ref{lfp perfect cotangent} and the fact that on $\pi_{0}$ this follows from classical theory and that perfectness of modules can be checked fpqc-locally (we will see this in Remark \ref{perf fpqc local}).
\end{proof}

Our definition above differs from the theory developed in \cite{TV2}, where they assume the property to be \'etale-locally on the source and base. This would include the property \textit{\'etale} but makes no sense for geometric stacks, since this would imply that a morphism of affine derived schemes is \'etale if and only if it is smooth locally \'etale but the next remark shows that this can not hold. We assume that there was a mixup in \cite{TV2}, since if one looks at later proofs where the condition \'etale is used one needs a stronger condition than geometric. This is not surprising, since this problem occurs even in the classical theory for Artin stacks. One can solve this for example by assuming that \'etale morphisms are always DM in the sense that after base change to an affine the resulting stack is actually a DM-stack, i.e. has an \'etale cover by schemes. We did this analogously but want to mention that this is only done for completion and is not used later on in any of the proofs.

\begin{rem}
	We want to remark that the property \textit{\'etale} is not smooth local on the base, since if it would be smooth local in our context, then it would be smooth local in classical theory of schemes which it is not.
\end{rem}

\begin{defi}
	A morphism of derived stacks is called \textit{\'etale} if it is DM and \'etale.
\end{defi}

\begin{defi}
	A morphism of derived stacks $X\rightarrow Y$ is
	\begin{enumerate}
		\item an \textit{open immersion} if it is a flat, locally of finitely presentation and a monomorphism, where flat is in the sense of Definition \ref{def P-geometric} and monomorphism means $(-1)$-truncated in the sense of \cite{HTT}, i.e. the homotopy fibers of $X\rightarrow Y$ are either empty or contractible.
		\item	a \textit{closed immersion} if it is affine and for any $\Spec(B)\rightarrow Y$ the corresponding morphism $X\times_Y\Spec(B)\simeq \Spec(C)\rightarrow \Spec(B)$ induces a surjection $\pi_0B\rightarrow\pi_0C$ of rings.
	\end{enumerate}
\end{defi}

\begin{defi}
	\label{defi immersion}
	A morphism $f\colon X\rightarrow Y$ of derived stacks is a \textit{locally closed immersion} if for all affine derived schemes $\Spec(A)$ and all morphisms $\Spec(A)\rightarrow Y$ the base change morphism $X\times_{Y}\Spec(A)\rightarrow \Spec(A)$ factors as a closed immersion follows by an open immersion.
\end{defi}

\begin{rem}
	The definition of a closed immersion does not impose any monomorphism condition. This makes sense, since we will see that a monomorphism automatically has vanishing cotangent complex (see Lemma \ref{cotangent monomorphism}) and in particular, any closed immersion which is on $t_{0}$ of finite presentation and a monomorphism will be \'etale (this can proven analogously to \cite[Cor. 2.2.5.6]{TV2}). But as many naturally arising closed immersions are not \'etale, e.g. any regular immersion with non-vanishing cotangent complex, the above definition seems to be the one suited for the world of derived algebraic geometry.
\end{rem}

Let us give an important example of an open immersion of derived stacks.

\begin{lem}
	Let $A$ be an animated $R$-algebra and let $f\in \pi_0A$ be an element. The inclusion $j\colon \Spec(A[f^{-1}])\hookrightarrow \Spec(A)$ is an open immersion.
\end{lem}
\begin{proof}
	The proof is given in \cite[Lec. 3 Lem. 4.2]{Khan}. But for the convenience of the reader, we recall the proof.\par 
	We have to check that $j$ is a monomorphism, which is flat and locally of finite presentation. Locally of finite presentation follows from Lemma \ref{localization lfp}. Flatness, i.e. $\pi_0A[f^{-1}]\otimes_{\pi_0A}\pi_iA\simeq \pi_iA[f^{-1}]$ follows from $\pi_i(A[f^{-1}])=\pi_i(A)[f^{-1}]$. To see that it is a monomorphism, we have to show that the homotopy fibers of $\Hom(A[f^{-1}],B)\rightarrow \Hom(A,B)$ for any $B\in \AniAlg{R}$ are either empty or contractible. But this follows from the general property of localization (see Lemma \ref{localization}). 
\end{proof}

\begin{defi}
	A derived stack is called \textit{separated} if the diagonal is a closed immersion.	
\end{defi}

\begin{defi}
	A derived stack $X$ is \textit{quasi-compact} if there exists an $n$-atlas consisting of a single affine. A morphism $f\colon X\rightarrow Y$ of derived stacks is \textit{quasi-compact} if for all affine derived schemes $\Spec(A)$ and all morphisms $\Spec(A)\rightarrow Y$ the base change $X\times_{Y}\Spec(A)$ is quasi-compact.
\end{defi}

\begin{remark}
	Since affine schemes are separated, we see that affine derived schemes are also separated (note that the diagonal of an affine scheme is representable and that we only have to check that the corresponding ring morphism on $\pi_0$ is surjective, which follows from classical theory).
\end{remark}

\begin{defi}
	A derived stack $X$ is \textit{locally geometric} if we can write $X$ as the filtered colimit of geometric derived stacks $X_i$, with open immersions $X_i\hookrightarrow X$.\par 
	We say that locally geometric stack $X\simeq \colim_{i\in I} X_{i}$ is \textit{locally of finite presentation} if each $X_{i}$ is locally of finite presentation. 
\end{defi}

\begin{defprop}
	For a morphism of derived stacks $f\colon X\rightarrow Y$, we define $\Im(f)$ as an epi-mono factorisation $X\twoheadrightarrow \Im(f)\hookrightarrow Y$ of $f$ (here ``epi'' means ``effective epimorphism''). This factorisation is unique up to homotopy.
\end{defprop}
\begin{proof}
	The existence of such a factorisation follows from \cite[Ex. 5.2.8.16]{HTT}. The uniqueness up to homotopy follows from \cite[5.2.8.17]{HTT}
\end{proof}

\begin{remark}
	In the reference used in the above proof one shows that the image of a morphism $f\colon X\rightarrow Y$ is equivalent to the $(-1)$-truncation of $f$, which in turn is equivalent to the colimit of the \v{C}ech nerve of $f$ (see \cite[Cor. 6.2.3.5]{HTT} and note that per definition $0$-connective morphisms are effective epimorphisms, see. \cite[Def. 6.5.1.10]{HTT}).
\end{remark}

The following lemmas are clear from the definitions and may seem unnecessary complicated but will enable us to give another definition of open immersion, which shows that we won't have to deal with geometricity of open immersions.

\begin{lem}
\label{diag of mono}
	Let $\iota\colon U\hookrightarrow \Spec(A)$ be a monomorphism of derived stacks. Then $U$ has an affine diagonal.
\end{lem}
\begin{proof}
	Since $\iota$ is a monomorphism, we see that the diagonal of $\iota$ is an equivalence and hence we conclude.
\end{proof}

\begin{lem}
	\label{smooth affine geometric}
	Let $(A_i)_{i\in I}$ be a family of animated $B$-algebras having the property \textbf{P}, where \textbf{P} is as in Definition \ref{def P-geometric}. Let $U$ denote the image of the natural map $\coprod_{i\in I}\Spec(A_i)\rightarrow \Spec(B)$ and assume that the base change of $\Spec(A_{i})\rightarrow U$ with any derived affine scheme and any morphism $\Spec(C)\rightarrow U$ is smooth (note that this makes sense by Lemma \ref{diag of mono}) and has property \textbf{P}. Then the $\coprod_{i\in I} \Spec(A_i)$ is a $0$-atlas for $U$, via the natural map and in particular $U\hookrightarrow \Spec(B)$ is $0$-geometric and has property \textbf{P}.
\end{lem}
\begin{proof}
	This follows from the definitions.
\end{proof}

\begin{lem}
	\label{mono P-geometric}
	Let $U\hookrightarrow X$ be a monomorphism of derived stacks and let \textbf{P} be a property as in Definition \ref{def P-geometric}. Assume the base change with any $\Spec(A)\rightarrow X$ has a cover by a disjoint union of affine derived schemes over $A$ such that the conditions of Lemma \ref{smooth affine geometric} are satisfied. Then $U\hookrightarrow X$ is $0$-geometric and has property \textit{P}.
\end{lem}
\begin{proof}
	This follows from Lemma \ref{smooth affine geometric}.
\end{proof}

\begin{lem}
\label{mono geometric}
	Let $\iota\colon U\hookrightarrow X$ be a geometric monomorphism of derived stacks. Then $\iota$ is $0$-geometric.
\end{lem}
\begin{proof}
	This follows from Lemma \ref{mono P-geometric}.
\end{proof}

\begin{rem}
	Lemma \ref{mono geometric} implies for example that open immersions and locally closed immersions are automatically $0$-geometric.
\end{rem}

\begin{lem}
	\label{open immersion geometric}
	A morphism $U\rightarrow X$ of derived stacks is an open immersion if and only if for any $\Spec(A)\rightarrow X$  the base change is a monomorphism and there is an effective epimorphism $\coprod_{i\in I}\Spec(A_i)\rightarrow \Spec(A)\times_X U$ such that each $\Spec(A_i)\rightarrow\Spec(A)$ is an open immersion.
\end{lem}
\begin{proof}
	This follows from Lemma \ref{mono P-geometric}.
\end{proof}

We can also define derived versions of schemes with the notion of open immersions.
\begin{defi}
	Let $X$ be a derived stack. Then $X$ is a \textit{derived scheme} if it admits a cover $(\Spec(A_i)\hookrightarrow X)_{i\in I}$ such that each $\Spec(A_i)\hookrightarrow X$ is an open immersion (in particular $X$ is $1$-geometric).
\end{defi}

\begin{remark}
	If we have a morphism $\coprod_{i\in I} U_i\rightarrow X$ of derived stacks where each $U_i$ is an open immersion, we sometimes write $\bigcup_{i\in I} U_i$ for its image. \par 
	If $X\rightarrow Y$ is a morphism of derived stacks, where the diagonal of $Y$ is representable and $X$ is a derived scheme, then the image of $X\rightarrow Y$ is a derived scheme.\par 
	If $\coprod_{i\in I} \Spec(A_i)\rightarrow X$ is a morphism of derived stacks, where $X$ has representable diagonal and $\Spec(A_i)$ are affine open in $X$, then $\bigcup_{i\in I}\Spec(A_i)$ is an open substack of $X$.
\end{remark}

\begin{remdef}[Truncation]
	\label{truncation}
	We give a quick summary of \cite[\S 2.2.4]{TV2}.
	\par
	Let $\Abf$ be the model category of simplicial commutative $R$-algebras, as explained in section \ref{sec:simplicial commutative algebras}.
	The inclusion $\Alg{R}\hookrightarrow \Abf$ has a left adjoint $\pi_0\colon \Abf\rightarrow \Alg{R}$. This is a Quillen adjunction for the trivial model structure on $\Alg{R}$. This induces an adjunction $\begin{tikzcd} \pi_0\colon \AniAlg{R}\arrow[r,"",shift left = 0.8]&\arrow[l,"",shift left = 0.8]\Alg{R}\colon i \end{tikzcd}$. We therefore get adjunctions
	$$
	\begin{tikzcd} i_{!}\colon\Pcal(\Alg{R}^{\op}) \arrow[r,"",shift left = 0.8]&\arrow[l,"",shift left = 0.8]\Pcal(\AniAlg{R}^{\op})\colon i^\ast\colon \Pcal(\AniAlg{R}^{\op})\arrow[r,"",shift left = 0.8]&\arrow[l,"",shift left = 0.8]\Pcal(\Alg{R}^{\op}) \colon \pi_0^\ast \end{tikzcd}
	$$  
	(here $i^\ast$ (resp. $\pi_0^\ast$) is defined as the restriction of a presheaf and $\Pcal(\Ccal)$ denotes the $\infty$-category of presheaves of spaces on $\Ccal$ and $i_{!}$ is given by left Kan extension).
	The inclusion $\Alg{R}\hookrightarrow \Abf$ induces an equivalence to the discrete animated $R$-algebras and thus the restriction preserves $i^{*}$ preserves sheaves and thus composing $i_{!}$ with the sheafification, we get the adjunction
	$$
	\begin{tikzcd} i_{!}\colon  \Shv_{\et}(\Alg{R})\arrow[r,"",shift left = 0.8]&\arrow[l,"",shift left = 0.8]\Shv_{\et}(\AniAlg{R})\colon i^\ast\colon\Shv_{\et}(\AniAlg{R}) \arrow[r,"",shift left = 0.8]&\arrow[l,"",shift left = 0.8] \Shv_{\et}(\Alg{R})\colon \pi_0^\ast. \end{tikzcd}
	$$  
	For convenience later on, we define $t_0\coloneqq i^\ast$ and $\iota\coloneqq i_{!}$. Note, that by general theory of Kan extensions, the functor $\iota$ is indeed fully faithful (see \cite[\S 4.3.2]{HTT}).\par  
	For a derived scheme $X\in\Shv(\AniAlg{R})^{\et}$, we denote its image under $t_0$ with $X_{\cl}$ and call it the \textit{underlying classical scheme}. Note that $t_0(\Spec(A)) \simeq \Spec(\pi_0A)_{\cl}$ and $\iota(\Spec(\pi_0A)_{\cl})\simeq \Spec(\pi_0A)$ (by adjunction and the fact that any morphism from an animated ring to a discrete ring is characterized by the corresponding morphisms on discrete rings). Also by adjunction being an effective epimorphism is preserved under $t_{0}$. So if $\coprod_{i\in I}\Spec(A_i)$ is a Zariski atlas of $X$, we see that $\coprod_{i\in I}\Spec(\pi_0A_i)_\cl$ is a cover of $X_{\cl}$ and thus $X_{\cl}$ has values in discrete spaces, i.e. sets (note that \'etale locally any morphism $\Spec(B)\rightarrow X_{\cl}$ factors through $\coprod_{i\in I}\Spec(\pi_0A_i)_\cl$, in particular the points of $X_{\cl}$ can be computed by the points of its atlas, which are discrete). Hence, $X_{\cl}$ recovers the classical notion of a scheme.\par 
	Let us state a few interesting properties of $t_0$ and $i$.
	\begin{enumerate}
		\item	The functor $t_0$ has a right and left adjoint (see above),
		\item	the functor $t_0$ preserves geometricity (here geometricity of sheaves in $\Alg{R}$ is defined similarly to derived stacks, see \cite[\S 2.1.1]{TV2} for further information) and the properties flat, smooth and \'etale along geometric morphisms,
		\item	the functor $\iota$ preserves geometricity, homotopy pullbacks of $n$-geometric stacks along flat morphisms and sends flat (resp. smooth, \'etale) morphisms of $n$-geometric stacks to flat (resp. smooth, \'etale) morphisms of $n$-geometric stacks,
		\item if $X\in\Shv(\Alg{R})^{\et}$ is $n$-geometric and $X'\rightarrow \iota(X)$ is a flat morphism, then $X'$ is the image of an $n$-geometric stack under $\iota$.
	\end{enumerate}
	A proof for these statements is given in \cite[Prop. 2.2.4.4]{TV2}.
\end{remdef}

We list some properties of geometric morphisms of derived stacks.

\begin{lem}
	\label{geometric base change}
	Let $X\rightarrow Z$ and $Y\rightarrow Z$ be morphisms of derived stacks. If $X\rightarrow Z$ is $n$-geometric, then so is $X\times_Z Y\rightarrow Y$.
\end{lem}
\begin{proof}
	This follows immediately from the definition.
\end{proof}

\begin{lem}
	\label{geometric local}
	A morphism of derived stacks $X\rightarrow Y$ is $n$-geometric if and only if the base change under $\Spec(A)\rightarrow Y$ for any $A\in \AniAlg{R}$ is $n$-geometric.
\end{lem}
\begin{proof}
	This follows immediately from the definitions.
\end{proof}

\begin{lem}
	\label{geometric comp}
	Let $f\colon X\rightarrow Y$ and $g\colon Y\rightarrow Z$ be morphisms of derived stacks. If $f$ and $g$ are $n$-geometric, then so is $g\circ f$.
\end{lem}
\begin{proof}
	The proof is straightforward using induction on $n$ (see \cite[Prop. 1.3.3.3 (3)]{TV2}).
\end{proof}

\begin{prop}
	\label{diag geometric}
	Let $f\colon X\rightarrow Y$ be a morphism of derived stacks. Assume $X$ is $n$-geometric and the diagonal $Y\rightarrow Y\times Y$ is $n$-geometric. Then $f$ is $n$-geometric.
\end{prop}
\begin{proof}
	This is analogous to \cite[Lem. 4.30]{AG}.\par
	Let $\coprod_{i\in I} \Spec(A_i)\twoheadrightarrow X$ be an $n$-atlas. Consider a morphism $\Spec(A)\rightarrow Y$, where $A$ is an animated ring. Then we have a morphism $\coprod_{i\in I} \Spec(A_i)\times_Y \Spec(A)\rightarrow \coprod_{i\in I} \Spec(A_i\otimes A)$, which is $n$-geometric, since it is the base change of the diagonal under $\coprod_{i\in I} \Spec(A_i\otimes A)$. Therefore $\coprod_{i\in I}\Spec(A_i)\times_Y \Spec(A)$ has an $n$-atlas, say given by $(\Spec(B_j)\rightarrow\coprod_{i\in I}\Spec(A_i)\times_Y \Spec(A))_{j\in J}$ and by Lemma \ref{geometric comp}, we see that $(\Spec(B_j)\rightarrow X\times_Y\Spec(A))_{j\in J}$ is an $n$-atlas. This finishes the proof, but to make things clear, we finally get following diagram with pullback squares
	$$
	\begin{tikzcd}
	  	\coprod_{j\in J} \Spec(B_j)\arrow[d,"n\textup{-atlas}",two heads,swap]&\coprod \Spec(A_i\otimes A) &\\
		\coprod_{i\in I} \Spec(A_i)\times_Y \Spec(A)\arrow[ru]\arrow[r,"n\textup{-geom.}",two heads]\arrow[d,""]&X\times_Y\Spec(A)\arrow[r,""]\arrow[d,""]&\Spec(A)\arrow[d,""]\\
		\coprod A_i\arrow[r,"n\textup{-atlas}",two heads]&X\arrow[r,""]&Y.
	\end{tikzcd}
	$$
\end{proof}

\begin{cor}
	Let $X$ and $Y$ be $n$-geometric stacks. Then any morphism $X\rightarrow Y$ is $n$-geometric.
\end{cor}
\begin{proof}
	This follows immediately from the definitions and Proposition \ref{diag geometric}
\end{proof}

\begin{prop}
	\label{diag + proj geometric}
	Let $X\rightarrow Y$ be an effective epimorphism of derived stacks and suppose that $X$ and $X\times_YX$ are $n$-geometric. Further, assume that the projections $X\times_Y X\rightarrow X$ are $n$-geometric and smooth. Then $Y$ is an $(n+1)$-geometric stack. If in addition $X$ is quasi-compact and $X\rightarrow Y$ is a quasi-compact morphism, then $Y$ is quasi-compact. Finally if $X$ is locally of finite presentation, then so is $Y$.
\end{prop}
\begin{proof}
This is analogous to \cite[Lem. 4.29]{AG}.\par
	Let $\coprod_{i\in I} \Spec(A_i)\twoheadrightarrow X$ be an $n$-atlas. Consider the following diagram with pullback squares
	$$
	\begin{tikzcd}
	\Spec(A_i)\times_Y \Spec(A_j)\arrow[d,""]\arrow[r,""]&X\times_Y \Spec(A_j)\arrow[d,""]\arrow[r,""]& \Spec(A_j)\arrow[d,""]\\
	\Spec(A_i)\times_Y X\arrow[d,""]\arrow[r,""]&X\times_YX\arrow[r,""]\arrow[d,""]&X\arrow[d,""]\\
	\Spec(A_i)\arrow[r,""]&X\arrow[r,"f"]&Y.
	\end{tikzcd}
	$$
	It suffices to show that $\coprod_{i\in I}\Spec(A_i)\rightarrow X\rightarrow Y$ is an $(n+1)$-atlas. Since the projections $X\times_Y X\rightarrow X$ and $\Spec(A_i)\rightarrow X$ are $n$-geometric smooth, we will see that $\Spec(A_i)\rightarrow X\rightarrow Y$ is $n$-geometric smooth, proving our claim.\par 
	Indeed, let $\Spec(C)\rightarrow Y$ be a morphism from an affine derived scheme. Consider the base change $\coprod_{j\in I} \Spec(A_j)\times_Y \Spec(C)\twoheadrightarrow \Spec(C)$, which is an effective epimorphism. In particular, we can find an \'etale covering $\Spec(\widetilde{C})\rightarrow \Spec(C)$, which factors through $\coprod_{j\in I} \Spec(A_j)\times_Y \Spec(C)$. To show that $\Spec(A_i)\times_Y \Spec(C)$ has an $n$-atlas, it suffices to check that the base change with $\widetilde{C}$ has an $n$-atlas (see \cite[Prop. 1.3.3.4]{TV2}). Now let us look at the following diagram with pullback squares 
	$$
	\begin{tikzcd}
	Z\arrow[r,""]\arrow[d," f'"]& W\arrow[r,""]\arrow[d,""]& \Spec(\widetilde{C})\arrow[d,"f"]\\
	\coprod_{j\in I} \Spec(A_i)\times_Y \Spec(A_j)\times_Y C\arrow[d,"g'", two heads]\arrow[r,"h"]&\coprod_{j\in I} X\times_Y \Spec(A_j)\times_Y C\arrow[d,"", two heads]\arrow[r,"l"]& \coprod_{j\in I}\Spec(A_j)\times_Y C\arrow[d,"g", two heads]\\
	C\times_Y \Spec(A_i)\arrow[d,""]\arrow[r,""]&C\times_YX\arrow[r,""]\arrow[d,""]&C\arrow[d,""]\\
	\Spec(A_i)\arrow[r,""]&X\arrow[r,""]&Y.
	\end{tikzcd}
	$$
	Since $ g\circ f$ is affine \'etale effective epimorphism, we know that $g'\circ f'$ is affine \'etale effective epimorphism. 
	Since the projections are $n$-geometric smooth and by the above the projection $\Spec(A_i)\times_Y \Spec(A_j)\rightarrow \Spec(A_i)$ is $n$-geometric smooth, we see that $l\circ h$ is $n$-geometric smooth. Therefore, $Z$ has an $n$-atlas and since $g\circ f'$ is affine \'etale, we see that the $n$-atlas of $Z$ gives an $n$-atlas of $\Spec(A_i)\times_Y \Spec(C)$.\par
	The rest of the statement follows immediately by the definitions.
\end{proof}

We conclude this section with an important remark. This remark shows, for an animated ring $A$, how open subschemes of $\Spec(\pi_{0}A)_{\cl}$ can be lifted to derived open subschemes of $\Spec(A)$. In particular, when we want to show that an inclusion of derived stacks is an open immersion, it suffices to show that it is an open immersion after applying $t_{0}$.

\begin{remark}[Lifting opens along affines]
	\label{lift along affine}
	Let $A$ be an animated ring.
	Assume we have an open subscheme $U\hookrightarrow \Spec(\pi_0A)_{\cl}$ of an affine scheme. Let $( \Spec(\pi_0A_{f_i})_{\cl}\rightarrow U)_{i\in I} $ be an affine open cover by basis elements. Certainly, we can lift this open cover to an open subscheme $V\coloneqq \Im(\coprod_{i\in I}\Spec(A[f_i^{-1}]))$, where the image is taken in $\Spec(A)$. Let $B$ be a animated $A$-algebra with structure morphism $w\colon \Spec(B)\rightarrow \Spec(A)$. Then $w$ factors through an $u\colon \Spec(B)\rightarrow V$, i.e. $u\in V(B)$, if and only if there is an \'etale cover $(B\rightarrow B_j)_{j\in J}$ such that for every $j$ there is an $i$ with $\pi_0w_j(f_i)$ invertible, where $w_j$ is the composition of $w$ with the natural map $B\rightarrow B_j$. \par 
	To see this, assume we have a map $u\colon \Spec(B)\rightarrow V$ of derived $A$-schemes. Then base change with the affine open cover of $V$ gives an affine open $\coprod_{i\in I} \Spec(B_i)$ cover of $\Spec(B)$ that maps to $\coprod_{i\in I} \Spec(A[f_i^{-1}])$ via projection (note that $V$ is an open subscheme of an affine scheme and thus separated, so in particular the diagonal of $V$ is affine). The projection $\coprod_{i\in I}\Spec(B_i)\rightarrow\coprod_{i\in I} \Spec(A[f_i^{-1}])$ is induced by the termwise projections (note that coproducts in $\infty$-topoi are universal). Thus, by the universal property of localization, we see that $\pi_0w_i(f_i)$ is invertible in $\pi_0B_i$.
	\par
	Now assume there is an \'etale cover $( \Spec(B_j)\rightarrow \Spec(B))_{j\in J}$ such that for every $j$ there is an $i$ with $\Spec(B_j)\rightarrow \Spec(A[f_i^{-1}])$. In particular, we get a map $\Spec(B_j)\rightarrow \coprod_{i\in I}\Spec(A[f_i^{-1}])$ and by taking coproducts and fiber products, we get a map $$\Spec(B)\simeq\colim_{\Delta}(\Cv(\coprod_{j\in J}\Spec(B_j)/B)_{\bullet})\rightarrow \coprod_{i\in I}\Spec(A[f_i^{-1}])\rightarrow V.$$ 
\end{remark}

\subsection{Quasi-coherent modules over derived stacks}

In this section we will shortly look at quasi-coherent modules over derived stacks.  We show that they behave as ``expected''. Namely, quasi-coherent modules over derived stacks still satisfy descent\footnote{We will make this explicit later on, as we did not define Grothendieck topologies on derived stacks.}. We also have pullback and pushforward functors that are adjoint to another. Further, we show that the $\infty$-category of quasi-coherent modules (in the derived sense) over a (classical) scheme $X$ is equivalent to $\Dcal_{\textup{qc}}(X)$.\par 
We will closely follow \cite[\S I.3]{GR}, \cite{DAG} and \cite{Khan} and generalize some results following their ideas.

\begin{defi}
	Let $X$ be a presheaf on $\AniAlg{R}^{\op}$, we define the \textit{$\infty$-category of quasi-coherent modules over $X$} to be 	$$
		\QQCoh(X)\coloneqq \lim_{\Spec(A)\rightarrow X}\MMod_A.
	$$
	An element $\Fcal\in\QQCoh(X)$ is called \textit{quasi-coherent module over $X$} or \textit{$\Ocal_{X}$-module}. For any affine derived scheme $\Spec(A)$ and any morphism $f\colon\Spec(A)\rightarrow X$, we denote the image of a quasi-coherent module $\Fcal$ under the projection $\QQCoh(X)\rightarrow\MMod_{A}$, with $f^{*}\Fcal$.\par 
	We define the $\infty$-category of \textit{perfect quasi-coherent modules over $X$} to be
	$$
		\QQCoh_{\perf}(X)\coloneqq \lim_{\Spec(A)\rightarrow X}\MMod^{\perf}_A.
	$$
	We say that a perfect quasi-coherent module $\Fcal$ over $X$ has \textit{Tor-amplitude in $[a,b]$} if for every derived affine scheme $\Spec(A)$ and any morphism $f\colon \Spec(A)\rightarrow X$ the $A$-module $f^{*}\Fcal$ has Tor-amplitude in $[a,b]$. 
\end{defi}

\begin{remark}
	\label{qqcoh right kan}
	We see that that by definition $\QQCoh(-)$ (resp. $\QQCoh_{\perf}(-)$) is a right Kan extension of $\MMod_{-}\colon \AniAlg{R}\rightarrow \ICat$ (resp. $\MMod_{-}^{\perf}\colon \AniAlg{R}\rightarrow\ICat$) onto $\Pcal(\AniAlg{R}^{\op})^{\op}$ along the Yoneda emebdding.\par 
	A priori $\MMod_{-}$ is a functor from animated rings to the $\infty$-category of not necessarily small $\infty$-categories. But for the purpose of this article if we talk about the right Kan extension along the Yoneda embedding to presheaves on $\AniAlg{\ZZ}$, we assume smallness of the module categories.
\end{remark}

\begin{rem}
\label{right kan of mono}
	Note that limits preserve monomorphisms in $\ICat$ (as this $\infty$-category has limits). Therefore, if $F,G\colon \AniAlg{R}\rightarrow\ICat$ are functors and $\alpha\colon F\rightarrow G$ is a natural transformation such that $\alpha(A)$ is a monomorphism for all $A\in\AniAlg{R}$, we see that for the induced morphism $R\alpha$ on the right Kan extensions $RF$ resp. $RG$ of $F$ resp. $G$ under the Yoneda embedding $\AniAlg{R}\hookrightarrow\Pcal(\AniAlg{R}^{\op})^{\op}$ the evaluation on some $X\in\Pcal(\AniAlg{R}^{\op})^{\op}$ yields a monomorphism $R\alpha(X)\colon RF(X)\hookrightarrow RG(X)$.\par 
	This can be applied to see that for example for all $\in\Pcal(\AniAlg{R}^{\op})^{\op}$, we have that the natural morphism $\QQCoh_{\perf}(X)\rightarrow \QQCoh(X)$ is fully faithful and we can see $\QQCoh_{\perf}(X)$ as a full subcategory of $\QQCoh(X)$.
\end{rem}

\begin{lem}
\label{limit of stable}
	Let $\Ccal$ be the limit of stable $\infty$-categories $\Ccal_{k}$ indexed by some simplicial set $K$ with finite limit preserving transition maps, then $\Ccal$ is stable.
\end{lem}
\begin{proof}
	Since $\Ccal_{k}$ have finite limits, we know that $\Ccal$ has finite limits. Then the spectrum of $\Ccal$ is stable by \cite[Cor.  1.4.2.17]{HA}, but by \cite[Rem. 1.4.2.25]{HA}, we know that $\Sp(\Ccal)$ itself is a limit of the tower $\dots\rightarrow \Ccal_{*}\xrightarrow{\Omega}\Ccal_{*}$. In particular, we have $$\Sp(\Ccal)\simeq\Sp(\lim_{K}\Ccal_{\bullet})\simeq\lim_{K}\Sp(\Ccal_{\bullet})\simeq\lim_{K}\Ccal_{\bullet}\simeq \Ccal,$$ where we use \cite[Prop. 1.4.2.21]{HA} for the second to last equivalence.	
\end{proof}

\begin{rem}
\label{qcoh stable}
	By Lemma \ref{limit of stable}, we know that for any $X\in\Pcal(\AniAlg{R}^{\op})$ the $\infty$-category $\QQCoh(X)$ is stable, since it is the limit of stable $\infty$-categories and the transition maps are given by base change (the base change functor preserves fiber sequences, as they are equivalently cofiber sequences, and finite products, that are equivalent to finite coproducts).
\end{rem}

The following proposition is a generalization of \cite[\S I.3 Cor. 1.3.11]{GR} but we can follow the idea of the proof.

\begin{prop}
	\label{right kan of sheaf}
	Let $\Ccal$ be a presentable $\infty$-category and let $F\colon \AniAlg{R}\rightarrow \Ccal$ be a (hypercomplete) sheaf with respect to the Grothendieck topology $\tau\in\lbrace \textup{fpqc, \'etale}\rbrace$ on $\AniAlg{R}$. Let $RF$ denote the right Kan extension of $F$ along the Yoneda embedding $\AniAlg{R}\hookrightarrow\Pcal(\AniAlg{R}^{\op})^{\op}$. Further let us denote the corresponding $\infty$-topos of (hypercomplete) $\tau$-sheaves on $\AniAlg{R}$ with $\Shv_{\tau}$.  Then for any diagram $p\colon K\rightarrow \Pcal(\AniAlg{R}^{\op})$, where $K$ is a simplicial set, and morphism $\colim_{K} X_{k}\rightarrow Y$ that becomes an equivalence in $\Shv_{\tau}$ after sheafification\footnote{Recall${}^{\ref{foot.sheafi}}$ that we can describe $\Shv_{\tau}$ as a localization of $\Pcal(\AniAlg{R}^{\op})$ (as seen in the proof), so we get a functor $L\colon \Pcal(\AniAlg{R}^{\op})\rightarrow \Shv_{\tau}$ left adjoint to the inclusion, which we call \textit{sheafification}.}, we have that the natural map $RF(Y)\rightarrow \lim_{K} RF(X_{k})$ is an equivalence.
\end{prop}
\begin{proof}
	First, let us note that since $\Ccal$ is presentable, we can find a small subcategory $\Ccal'\subseteq \Ccal$ such that $\Ccal$ is a localization of $\Pcal(\Ccal')$ (see \cite[Thm. 5.5.1.1]{HTT}). In particular, the elements $RF(Y)$ and $\lim_{K} RF(X_{k})$ may be regarded as functors from $\Ccal'$ to $\SS$ and the natural morphism is an equivalence if and only if it is an equivalence after composing with the evaluation for every $c\in\Ccal'$ (see \cite[01DK]{kerodon}). We note that the inclusion of $\Ccal$ into $\Pcal(\Ccal')$ preserves limits and since the evaluation of a functor $G\in \Pcal(\Ccal')$ at $c$ is equivalent to $\Hom_{\Pcal(\Ccal')}(j(c),G)$, where $j\colon \Ccal'\hookrightarrow \Pcal(\Ccal')$ denotes the Yoneda embedding (see \cite[Lem. 5.5.2.1]{HTT}), we see that also the evaluation preserves limits. So it is enough to check that for every $c\in\Ccal'$, the morphism $RF(Y)(c)\rightarrow \lim_{\Delta}(RF(X_{k})(c))$ is an equivalence. In particular, we may replace $F$ by $\Hom_{\Pcal(\Ccal')}(j(c), -) \circ F$ for any $c\in \Ccal'$ and so without loss of generality, we may assume that $\Ccal\simeq \SS$.\par
	We will first discuss the case of $\tau$-sheaves.	By definition of the $\infty$-category $\Shv_{\tau}$, we know that all $\tau$-sheaves are $S$-local, where $S$ is the collection of those monomorphism $U\hookrightarrow \Spec(A)$, where $A\in\AniAlg{R}$, such that it defines a $\tau$-covering sieve (see \cite[\S 6.2.2]{HTT} for details). This in particular defines a localization functor $L\colon \Pcal(\AniAlg{R})\rightarrow \Pcal(\AniAlg{R})$ with essential image given by $\Shv_{\tau}$. Using \cite[Prop. 5.5.4.2]{HTT}, we see that any equivalence in $\Shv_{\tau}$ is local, i.e. any morphism $f\colon U\rightarrow V$ in $\Pcal(\AniAlg{R})$ such that $Lf$ is an equivalence and any $Q\in\Shv_{\tau}$ we have that the natural morphism $$\Hom_{\Pcal(\AniAlg{R})}(V,Q)\rightarrow\Hom_{\Pcal(\AniAlg{R})}(U,Q)$$ is an equivalence. In particular, in our situation, we have that 
	$$
	\Hom_{\Pcal(\AniAlg{R})}(Y,Q)\rightarrow\Hom_{\Pcal(\AniAlg{R})}(\colim_{K}X_{k},Q)\simeq\lim_{K} \Hom_{\Pcal(\AniAlg{R})}(X_{k},Q)
	$$ 
	is an equivalence (note that the colimit in the second $\Hom$ is taken in the $\infty$-category $\Pcal(\AniAlg{R})$, whereas for $Y\simeq \colim_{K} X_{k}$ colimit is taken in $\Shv_{\tau}$ which do not agree in general since the inclusion $\Shv_{\tau}\hookrightarrow \Pcal(\AniAlg{R})$ does not preserve colimits in general).\par  Since $F$ is a sheaf with respect to the topology $\tau$, we therefore have 	$$
	\Hom_{\Pcal(\AniAlg{R})}(Y,F)\simeq \lim_{K}\Hom_{\Pcal(\AniAlg{R})}(X_{k},F).
	$$ Since we can write any presheaf on $\AniAlg{R}$ as a colimit of representable ones (see \cite[Lem. 5.1.5.3]{HTT}) and the Yoneda lemma \cite[Lem. 5.5.2.1]{HTT}, we finally have the equivalence
	 $$
		 RF(Y)\xrightarrow{\sim} \lim_{K} RF(X_{k}).
	 $$
	 \par 
	 The case of hypercomplete $\tau$-sheaves is completely analogous, noting that the $\infty$-topos of hypercomplete $\tau$-sheaves can be realized as a localization of $\Pcal(\AniAlg{R})$ with respect to hypercovers (see \cite[Cor. 6.5.3.13]{HTT}).
\end{proof}

\begin{defi}
Let $\Ccal$ be a presentable $\infty$-category and let  $\tau$ be the fpqc or \'etale topology on $\AniAlg{R}$.
	A functor $F\colon \Pcal(\AniAlg{R}^{\op})^{\op}\rightarrow \Ccal$ is a \textit{(hypercomplete) sheaf or satisfies $\tau$-descent} if for any effective epimorphism $X\rightarrow Y$ (resp. a hypercover $X^{\bullet}\rightarrow Y$), we have $$RF(Y)\simeq \lim_{\Delta}RF(\Cv(X/Y)_{\bullet})\textup{ (resp. }RF(Y)\simeq \lim_{\Delta_{s}}RF(X^{\bullet})).$$
\end{defi}

\begin{rem}
\label{rem.sheaf.kan}
	In the setting of Proposition \ref{right kan of sheaf}, we see that if $F$ is a (hypercomplete) sheaf, then so is its right Kan extension $RF$.
\end{rem}

\begin{rem}
\label{infty cat presentable}
	An important example of a presentable $\infty$-category is the $\infty$-category $\ICat$ of small $\infty$-categories. Presentability of this $\infty$-category follows from the fact that it is the $\infty$-category of a combinatorial simplicial model category (marked simplicial sets with the model structure of \cite[Prop. 3.1.5.2]{HTT}), which are precisely the presentable $\infty$-categories ( \cite[Prop. A.3.7.6]{HTT}).
\end{rem}

\begin{prop}
	\label{right kan ex derived scheme}
	Let $\Ccal$ be a presentable $\infty$-category and let $F\colon \AniAlg{R}\rightarrow \Ccal$ be an \'etale sheaf. Let $RF$ denote a right Kan extension of $F$ along the Yoneda embedding $\AniAlg{R}\hookrightarrow \Pcal(\AniAlg{R})^{\op}$. Then for any derived scheme $X$ over $R$, the natural morphism
	$$
	RF(X)\rightarrow \lim_{\substack{U\hookrightarrow X \\ \textup{affine open}}} F(U)
	$$ 
	is an equivalence.
\end{prop}
\begin{proof}
	This lemma is a generalization of \cite[Lec. 1 Prop. 3.5]{Khan} but can be proven the same. For the convenience of the reader, we give a proof.\par 
	Let $X$ be derived scheme and $Y\coloneqq \coprod_{i\in I}\Spec(A_i)\rightarrow X$ be a Zariski atlas. By Remark \ref{rem.sheaf.kan} we have
	$$
	RF(X) \simeq \lim_\Delta RF(\Cv(Y/X)_\bullet).
	$$ 
	For any affine open $U\hookrightarrow X$ let $Y_U\coloneqq \coprod_{i\in I} U\times_X \Spec(A_i)\rightarrow U$ be the induced cover on $U$. Thus the question reduces to showing the equivalence
	$$
	RF(\Cv(Y/X)_n)\rightarrow \lim_{\substack{U\hookrightarrow X\\ \textup{ affine open}}} RF(\Cv(Y_U/U)_n) ,
	$$
	for all $[n]\in\Delta$. By cofinality, we may replace $X$ in the limit argument by $\Cv(Y/X)_n$ for any $n$.\par
	To see this, note that for every affine open $U\hookrightarrow \Cv(X/Y)_n$, we get a morphism $U\rightarrow U\times_X U\simeq \Cv(Y/X)_n\times_X U \times_{\Cv(Y/X)_n} U \simeq \Cv(Y_U/U)_n\times_{\Cv(Y/X)_n} U\rightarrow \Cv(Y_U/U)$. Thus $$\lim_{\substack{U\hookrightarrow \Cv(Y/X)_n\\ \textup{affine open}}} RF(U) \simeq \lim_{\substack{U\hookrightarrow X\\ \textup{affine open}}} RF(\Cv(Y_U/U)_n).$$	
	\par 
	Assume the pairwise intersection of the $\Spec(A_i)$ is affines, then $X$ is affine and the question is trivial. Now assume the pairwise intersection is not affine then it is open in an affine scheme and thus separated. Thus they admit Zariski covers, where each of the pairwise intersection is affine. Repeating the whole process concludes the proof.
\end{proof}

\begin{remark}
	\label{perf fpqc local}
	Let us remark that the functors $A\mapsto \MMod_{A}$ and $A\mapsto \MMod_A^{\textup{perf}}$ are hypercomplete sheaves for the fpqc topology. The first assertion follows by \cite[Cor. D.6.3.3]{SAG}. The second assertion is clear since modules satisfy flat hyperdescent and since perfect modules are precisely the dualizable ones (see \cite[Prop. 7.2.2.4]{HA}), we can construct a dual fpqc locally (see \cite[Prop. 4.6.1.11]{HA}) - see the proof \cite[Lem. 5.4]{AG} for a more detailed explanation.
\end{remark}

\begin{rem}
	\label{descent qcoh}
	Using the definition of the functors $\QQCoh$ and $\QQCoh_{\perf}$, we see with Remark \ref{perf fpqc local} and Remark \ref{rem.sheaf.kan} that these functors satisfy descent in the sense that for any effective epimorphism of derived stacks $X\twoheadrightarrow Y$, we have $$\QQCoh(Y)\simeq \lim_{\Delta}\QQCoh(\Cv(X/Y)_{\bullet})\textup{ resp. }\QQCoh_{\perf}(Y)\simeq \lim_{\Delta}\QQCoh_{\perf}(\Cv(X/Y)_{\bullet}).$$
\end{rem}

\begin{remark}
	Using Remark \ref{qqcoh right kan} and Proposition \ref{right kan ex derived scheme}, we see that a quasi-coherent module over a derived scheme is given by a compatible family of modules $(\Fcal_A)_A$ for every affine open $\Spec(A)\hookrightarrow X$.
\end{remark}

\begin{remark}
	Let us recall the derived direct and inverse image. For a morphism of animated rings $\Spec(B)\rightarrow \Spec(A)$, we get a forgetful functor $\MMod_B\rightarrow \MMod_A$ (this follows from \cite[4.6.2.17]{HA}). This functor is right adjoint to the tensor product $B\otimes_A -$. We can globalize this to the case where we replace the domain by an arbitrary derived stack $X$. Namely, any quasi-coherent module $\Fcal$ over $X$ is determined by its underlying $C$-module $\iota^{*}\Fcal$, for $\iota\colon\Spec(C)\rightarrow X$. Since $C$ is naturally an $A$-algebra, we can forget the $C$-structure and view $\Fcal$ as a limit in $\MMod_A$. The tensor product with each such $C$ also induces a functor from $A$-modules to quasi-coherent $X$-modules. We can also globalize this construction on the base for a geometric morphism of derived stacks, i.e. if $f\colon X\rightarrow S$ is a geometric morphism of derived stacks, we get an adjunction
	$$
	\begin{tikzcd} 
	f^{\ast}\colon \QQCoh(S) \arrow[r,"",shift left = 0.8]&\arrow[l,"",shift left = 0.8] \QQCoh(X)\colon f_\ast,
	\end{tikzcd}
	$$
	note here that the right adjoint comes formally from the fact that the pullback by construction commutes with colimits. If one adds assumptions to $f$, then we can say more about the pushforward but we will not do this here and refer to \cite[\S 5.5]{DAG} since it is not of interest for us.\par
	If we work with classical schemes, we will write $Lf^*$ and $Rf_*$ to differentiate between the classical notions.
\end{remark}

\begin{prop}
\label{derived cat is kan ext}
	Let $X$ be a scheme. Then we have an equivalence of $\infty$-categories $\Dcal_{\textup{qc}}(X)\simeq \QQCoh(X)$, where $\Dcal_{\textup{qc}}(X)$ denoted the derived $\infty$-category of $\Ocal_{X}$-modules with quasi-coherent cohomologies.
\end{prop}
\begin{proof}
This is shown in the spectral setting in \cite{SAG} and can be followed in our setting from Lurie's PhD thesis \cite{DAG}. For convenience of the reader we will show how to conclude this proposition as a consequence of both references.\par
	A scheme $X$ is per definition a locally ringed space $(X,\Ocal_{X})$. We let $X'\coloneqq\Shv_{\Sets}(X)$ denote the Grothendieck topos associated to the small Zariski-site of $X$. The sheaf of rings $\Ocal_{X}$ can be viewed as a ring object of $X'$. So in particular, the tuple $(X',\Ocal_{X})$ defines a locally ringed topos. We write $\Xcal$ for the $1$-localic $\infty$-topos assocaited to $X'$ (which exists by \cite[Prop. 6.4.5.7]{HTT}). As explained in \cite[Rem. 1.4.1.5]{SAG}, we can view $\Ocal_{X}$ as a sheaf of connective $0$-truncated $E_{\infty}$-rings (which are just commutative rings) on $\Xcal$, which we denote by $\Ocal_{\Xcal}$ and hence get a spectrally ringed space $(\Xcal,\Ocal_{\Xcal})$, which is local (we refer to \cite[\S I.1.1]{SAG} for the definitions).\par 
	Since $\Ocal_{\Xcal}$ takes values in commutative rings, we can also view it as a sheaf on $\Xcal$, with values in animated rings. Therefore $(\Xcal,\Ocal_{\Xcal})$ defines a spectral scheme resp. a derived scheme in the sense of \cite[\S I.1]{SAG} resp. \cite{DAG}. Note that also the definition of an $\Ocal_{\Xcal}$-module agrees in both references, i.e. in both references, we see an $\Ocal_{\Xcal}$-module as an $\Ocal_{\Xcal}$-module object in $\Shv_{\Sp}(\Xcal)$, where $\Ocal_{\Xcal}$ is naturally seen as a sheaf with values in spectra.\par 
	By \cite[Thm. 4.6.5]{DAG}, we have an equivalence of $\QQCoh(X)$ and $\infty$-categories of sheaf $\Ocal_{\Xcal}$-modules $M$ on $\Xcal$ such that 
	\begin{enumerate}
		\item $\pi_{i}M$ (which is defined as the sheafification of the presheaf $V\mapsto \pi_{i}M(V)$) is a quasi-coherent sheaf on the underlying Deligne-Mumford stack of $(\Xcal,\Ocal_{\Xcal})$, and
		\item the underlying sheaf of spaces of $M$ is a hypersheaf (as explained above $M$ can be seen as a sheaf on $\Xcal$ with values in $\Sp$, i.e. a limit preserving functor $M\colon \Xcal^{\op}\rightarrow \Sp$, and composing with $\Omega^{\infty}\colon \Sp\rightarrow \SS$ defines the underlying sheaf of spaces of $M$).
	\end{enumerate}
	Using \cite[Prop. 2.2.6.1]{SAG}, we see that $\QQCoh(X)$ is equivalent to the $\infty$-category of quasi-coherent sheaves on the spectral Deligne-Mumford stack $(\Xcal,\Ocal_{\Xcal})$.
	Now \cite[Cor. 2.2.6.2]{SAG} shows that indeed the derived $\infty$-category of $\Ocal_{X}$-modules with quasi-coherent cohomology is equivalent to $\QQCoh(X)$.
\end{proof}

\subsection{Deformation theory of derived stacks}

This section is derived from \cite[\S 1.4]{TV2},\cite[\S 4.2]{AG}, \cite[Lecture 5]{Khan}.\par 
In this section, we will globalize the results of Section \ref{sec:cotangent affine} and \ref{sec:smooth and et} to geometric morphisms of derived stacks. For this, we will define a global version of the cotangent complex and list properties. Most importantly, we will show that any $n$-geometric morphism has a cotangent complex and that smooth morphisms are characterized by the cotangent complex. Further, we will use the results to show that geometric stacks are automatically hypercomplete sheaves for the \'etale topology.\par
We let $R$ be a ring and assume every derived stack is a derived stack over $R$.
\par
Let $f\colon X\rightarrow Y$ be morphism of derived stacks. Let $x\colon \Spec(A)\rightarrow X$ be an $A$-point, where $A$ is an animated $R$-algebra. Let $M\in\MMod^\cn_A$ and let us look at the commutative square
$$
\begin{tikzcd}
X(A\oplus M)\arrow[r,""]\arrow[d,""]&X(A)\arrow[d,"f"]\\
Y(A\oplus M)\arrow[r,""]&Y(A),
\end{tikzcd}
$$ 
where the vertical arrow are given by the canonical projection $A\oplus M\rightarrow A$. We set the \textit{dervations at the point $x$} as
$$
\Der_x(X/Y,M)\coloneqq \fib_x(X(A\oplus M)\rightarrow X(A)\times_{Y(A)}Y(A\oplus M)),
$$
where we see $x$ as a point in the target via the natural map induced by $
\Spec(A\oplus M)\rightarrow\Spec(A) \xrightarrow{x}X\xrightarrow{f}Y.
$
\begin{defi}[\protect{\cite[Def. 1.4.1.5]{TV2}}]
\label{defi.cotangent.global}
	Let $f\colon X\rightarrow Y$ be a morphism of derived stacks. We say $L_{f,x}\in\MMod_A$ is a \textit{cotangent complex for $f$ at the point $x\colon \Spec(A)\rightarrow X$}, if it is $(-n)$-connective, for some $n\geq 0$ and for all $M\in\MMod_A^\cn$ there is a functorial equivalence $$\Hom_{\MMod_A}(L_{f,x},M)\simeq\Der_x(X/Y,M).$$\par 
	When such $L_{f,x}$ exists, we say \textit{$f$ admits a cotangent complex at the point $x$}. If there is no possibility of confusion, we also write $L_{X/Y,x}$ for $L_{f,x}$. We also write $L_X$ if $Y\simeq \Spec(R)$.
\end{defi}

\begin{defi}
	Let $f\colon X\rightarrow Y$ be a morphism of derived stacks. We say that $L_f\in \QQCoh(X)$ \textit{is a cotangent complex for $f$} if for all points $x\colon \Spec(A)\rightarrow X$ the $A$-module $x^*L_f$ is a cotangent complex for $f$ at the point $x$.\par 
	If $L_f$ exists, we say that \textit{$f$ admits a cotangent complex}. We will write $L_{f,x}$ instead of $x^*L_f$ if $f$ admits a cotangent complex.\par 
	If $Y\simeq\Spec(R)$ and $L_{f}$ exists, we say that $X$ admits an \textit{absolute cotangent complex}.
\end{defi}

\begin{remark}
	By Lemma \ref{connective yoneda ff}, the  cotangent complex, for any morphism of derived stacks, is unique up to homotopy.
\end{remark}

\begin{rem}
	\label{affines have cotangent}
	Note that any morphism of affine derived schemes $f\colon \Spec(B)\rightarrow \Spec(A)$ admits a cotangent complex. For any point $x\colon\Spec(C)\rightarrow \Spec(B)$, we have $L_{\Spec(B)/\Spec(A),x}\coloneqq L_{B/A}\otimes_B C$.
\end{rem}

\begin{lem}
	\label{properties of cotangent}
	Let $f\colon X\rightarrow Y$ be a morphism of derived stacks.
	\begin{enumerate}
		\item If $X$ and $Y$ admit absolute cotangent complexes, then $f$ admits a cotangent complex and we have the following cofiber sequence for any point $x\colon \Spec(A)\rightarrow X$
		$$
		L_{Y,f\circ x}\rightarrow L_{X,x}\rightarrow L_{f,x}.
		$$
		\item If $f$ admits a cotangent complex, then for any morphism of derived stacks $Z\rightarrow Y$ and any point $x\colon \Spec(A)\rightarrow X\times_YZ$, we have
		$$
		L_{f,x}\simeq L_{X\times_YZ/Z,x}.
		$$ 
		\item If for any morphism $x\colon \Spec(A)\rightarrow X$ the projection $\pr\colon X\times_{Y,f\circ x} \Spec(A)\rightarrow \Spec(A)$ admits a cotangent complex, then $f$ admits a cotangent complex and further we have
		$$
		L_{f,x}\simeq L_{\pr,x}.
		$$
		\item If for any point $x\colon \Spec(A)\rightarrow X$ the stack $X\times_{Y,f\circ x} \Spec(A)$ admits a cotangent complex, then $f$ has a cotangent complex and we have
		$$
		L_{\Spec(A),\id_{\Spec(A)}}\rightarrow L_{X\times_Y A,x}\rightarrow L_{f,x}.
		$$
	\end{enumerate}
\end{lem}
\begin{proof}
	The proof in the model categorical case is given in \cite[Lem. 1.4.1.16]{TV2}. But these properties are straightforward to check.\par 
	Part \textit{1} and \textit{2} follow from the definitions. Part \textit{3} follows from \textit{2} and part \textit{4} follows from \textit{1}, \textit{3} and Remark \ref{affines have cotangent}
\end{proof}

\begin{lem}
	\label{relative global cotangent sequence}
	Let $X\xrightarrow{f} Y\xrightarrow{g} Z$ be a morphism of derived stacks. Assume $Y/Z$ admits a cotangent complex, then $X/Y$ admits a cotangent complex if and only if $X/Z$ admits a cotangent complex. Further, we obtain a cofiber sequence
	$$
	f^*L_{Y/Z}\rightarrow L_{X/Z}\rightarrow L_{X/Y}
	$$
	of quasi-coherent modules over $X$ if the cotangent complexes exist. 
\end{lem}
\begin{proof}
	This is stated in \cite[Lec. 5 Prop. 5.7]{Khan}. But anyway we will give a proof. \par Let us take a point $x\colon \Spec(A)\rightarrow X$. Then what we impose is that we have a cofiber sequence
	$$
	L_{Y/Z,f\circ x}\rightarrow L_{X/Z,x}\rightarrow L_{X/Y,x}.
	$$
	By Lemma \ref{connective yoneda ff} it is enough to show that for any connective $A$-module $M$ the fiber of $\Der_{x}(X/Z,M)\rightarrow \Der_{f\circ x}(Y/Z,M)$ at the trivial derivation (given by $A\oplus M\rightarrow A\rightarrow X\rightarrow Y$) is given by $\Der_{x}(X/Y,M)$. But this is clear.
\end{proof}

\begin{lem}
	\label{cotangent monomorphism}
	Let $j\colon X\hookrightarrow Y$ be a monomorphism of derived stacks over $A$, then $j$ admits a cotangent complex and $L_j\simeq 0$.
\end{lem}
\begin{proof}
	The proof is the same as in \cite[Lec. 5 Prop. 5.9]{Khan}, but for the convenience of the reader, we recall the proof.\par 
	It suffices to show that at any point $x\colon\Spec(B)\rightarrow X$ the cotangent complex is zero, i.e. the space of derivations at $x$ is contractible. Since $j$ is a monomorphism, i.e. its fibers are $(-1)$-truncated (so either contractible or empty (see \cite[Def. 5.5.6.8]{HTT})), we know that the canonical map
	$$
	X(B\oplus M)\rightarrow X(B)\times_{Y(B)} Y(B\oplus M)
	$$
	is also a monomorphism (note that any point in the fiber of the above morphism defines a point in the fiber of $X(B\oplus M)\rightarrow Y(B\oplus M)$, which per definition has either contractible or empty fibers). But the canonical map $u\colon \Spec(B\oplus M)\rightarrow \Spec(B)\rightarrow X$ defines a derivation, so the space of derivations is nonempty and thus contractible.
\end{proof}

Now we can easily see that the homotopy groups of the localization with respect to one element is given by the localization of the homotopy groups.

\begin{lem}
\label{homotopy of localization}
	Let $A$ be an animated ring and $f\in\pi_{0}A=\pi_{0}\Hom_{\MMod_{A}}(A,A)$. Then we have $\pi_{i}(A[f^{-1}])\cong (\pi_{i}A)_{f}$ as $\pi_{0}A$-modules.
\end{lem}
\begin{proof}
From Proposition \ref{localization} and Lemma \ref{localization lfp} it follows that the map $\Spec(A[f^{-1}])\hookrightarrow \Spec(A)$ is locally of finite presentation and a monomorphism. Since monomorphisms have a vanishing relative cotangent complex (see Lemma \ref{cotangent monomorphism}), we conclude with Proposition \ref{smooth cotangent} that $A\rightarrow A[f^{-1}]$ is \'etale. Hence, we conclude using the definition of \'etale morphisms.
\end{proof}

\begin{lem}
	\label{etale cover of triv sq zero ext}
	Let $(A\rightarrow A_i)_{i\in I}$ be an \'etale covering in $\AniAlg{R}$ and let $M$ be an $A$-module. Then the family induced by base change $(A\oplus M\rightarrow A_i\oplus (M\otimes_A A_i))_{i\in I}$ is an \'etale cover.
\end{lem}
\begin{proof}
	We only need to show that $A_i\otimes_A (A\oplus M)\simeq A_i\oplus (M\otimes_A A_i)$, since \'etale covers are stable under base change. But by construction the functors $A_i\otimes_A (-\oplus M)$ and $(-\otimes_A A_i) \oplus (M\otimes_A A_i)$ from $\AniM_R$ to $\AniAlg{A_i}$ commute with sifted colimits and thus we are reduced to classical commutative algebra, where it is clear.
\end{proof}

\begin{lem}
	\label{cotangent exists for schemes}
	Let $f\colon X\rightarrow Y$ be a morphism of derived schemes. Then $f$ admits a cotangent complex.
\end{lem}
\begin{proof}
	A proof sketch is given in \cite[Lec. 5 Thm. 5.12]{Khan}, but since these are lecture notes, we recall the proof.\par
	We may assume that $Y=\Spec(R)$ using Lemma \ref{relative global cotangent sequence}.\par 
	By Proposition \ref{right kan ex derived scheme}, we know that $$\QQCoh(X)\simeq \lim_{\substack{\Spec(B)\hookrightarrow X\\\textup{ open immersion }}} \MMod_B.$$ For each of the open affines $\Spec(B)$ in $X$, we take $L_{\Spec(B)}\coloneqq L_{B/\ZZ}$, viewed as a quasi-coherent sheaf on $\Spec(B)$. Since the cotangent complex is compatible with taking pullbacks, i.e. $L_{B}\otimes_B A\simeq L_A$ for a triangle 
	$$
	\begin{tikzcd}
	\Spec(A)\arrow[rd,""]\arrow[d,""] \\
	\Spec(B)\arrow[r,""]&X,
	\end{tikzcd}
	$$
	where $\Spec(A),\Spec(B)$ are open in $X$, we see that this indeed defines an object in the limit, which we denote with $L_X$ (for the compatibility, note that the relative cotangent complexes of monomorphisms vanish by Lemma \ref{cotangent monomorphism}). This defines a cotangent complex on $X$.\par 
	We have to show that $L_X$ represents the space of derivations. Since we have glued the cotangent complex for affine opens, we will use a descent argument for arbitrary points. For this, we use that modules satisfy fpqc descent and that for any point $x\colon \Spec(B)\rightarrow  X$, the base change with an affine open cover of $X$ gives an affine open cover $(B_i)$ of $B$. Hence, for a connective $B$-module $M$, it suffices to show that
	$$
	\lim\Hom_{B_i}(L_{X,B_i},M_i)\simeq \lim \Der_{B_i}(X/\ZZ,M_i),
	$$ 
	where $M_i\coloneqq M\otimes_B B_i$, which is clear termwise, since each $B_i$ factors through some affine open $A_i$ of $X$ by construction (to write the derivations as a limit use the sheaf property of $X$, $\Spec(\ZZ)$ and Lemma \ref{etale cover of triv sq zero ext}). Note that for each affine open the cotangent complex exists and we claim that $L_{X,B_i}\simeq L_{X,A_i}\otimes_{A_i} B_i\simeq L_{\Spec(A_i),A_i}\otimes_{A_i}B_i\simeq L_{\Spec(A_i),B_i}$, which concludes the lemma.\par 
	To see this, note that $\Der_{B_i}(A_i/X,M_i)\simeq 0$, since  $\Spec(A_i)\hookrightarrow X$ is a monomorphism (see Lemma \ref{cotangent monomorphism}). Therefore $\Der_{B_i}(A_i/\ZZ,M_i)\simeq \Der_{B_i}(X/\ZZ,M_i)$ (since its fiber is $\Der_{B_i}(A_i/X,M_i)$), the same holds if we replace the point by $A_i$. Since the cotangent complexes at $A_i$ exist, we get $L_{\Spec(A_i),A_i}\simeq L_{X,A_i}$ and after tensoring with $B_i$, we see that $L_{X,B_i}$ is a cotangent complex for $X/\ZZ$ at $B_i$ if and only if $\Der_{B_i}(A_i/\ZZ,M_i)\simeq \Der_{B_i}(X/\ZZ,M_i)$ but this we have seen above, i.e. we have equivalences
	\begin{align*}
		\Hom_{B_i}(L_{X,B_i},M_i)\simeq \Hom_{B_i}(L_{\Spec(A_i),B_i},M_i)\simeq \Der_{B_i}(A_i/\ZZ,M_i)\simeq\Der_{B_i}(X/\ZZ,M_i).
	\end{align*}\par
	We remark that by this construction and commutativity of $\tau_{\geq 0}$ with limits, we have that $L_X$ is connective. In particular, we have shown that $L_X$ is a cotangent complex for $X/\ZZ$.
\end{proof}

\begin{remark}
	Let us give another construction of a cotangent complex. Consider the functor $L_{-}\colon \AniAlg{\ZZ}\rightarrow \Dcal(R)$ given by the usual cotangent complex seen as complex of abelian groups. We denote its right Kan extension along the inclusion $\AniAlg{R}\hookrightarrow  \dSch^{\op}_{/\Spec(R)}$ with $\RR L_{-}$. By the above proof, we see that for a derived scheme $X$, we have $L_{X}\simeq \RR L_{X}$ in $\Dcal(R)$. In particular, by stability of the derived $\infty$-category, the same holds for the relative cotangent complex.
\end{remark}

\begin{defi}
	We recall the notion of an obstruction theory for derived stacks respectively morphism of derived stacks (see \cite[1.4.2.1, 1.4.2.2]{TV2}).
	\begin{enumerate}
		\item[(i)] A derived stack $X$ is called \textit{infinitesimally cartesian} or \textit{inf-cartesian} if and only if for every animated $R$-algebra $A$, connective $A$-module $M$ with $\pi_0M=0$ and derivation $d\in \Der(A,M)$ the pullback square
		$$
		\begin{tikzcd}
			A\oplus_d M\arrow[r,""]\arrow[d,""]&A\arrow[d,"d"]\\
			A\arrow[r,"s"]&A\oplus M,
		\end{tikzcd}
		$$
		where $s$ denotes the trivial derivation, induces a pullback square 
		$$
		\begin{tikzcd}
			X(A\oplus_d M)\arrow[r,""]\arrow[d,""]&X(A)\arrow[d,"d"]\\
			X(A)\arrow[r,"s"]&X(A\oplus M).
		\end{tikzcd}
		$$
	\par 
	A morphism $f\colon X\rightarrow Y$ of derived stacks is called \textit{infinitesimally cartesian} or \textit{inf-cartesian} if and only if for every animated $R$-algebra $A$, connective $A$-module $M$ with $\pi_0M=0$ and derivation $d\in \Der(A,M)$ we have a pullback square 
	$$
	\begin{tikzcd}
		X(A\oplus_d M)\arrow[r,""]\arrow[d,""]&Y(A\oplus_d M)\arrow[d,""]\\
		X(A)\times_{X(A\oplus M)}X(A)\arrow[r,""]&Y(A)\times_{Y(A\oplus M)}Y(A).
	\end{tikzcd}
	$$

	\item[(ii)] A derived stack $X$ has an \textit{obstruction theory} if and only if it has a cotangent complex and is  infinitesimally cartesian.\par A morphism of derived stacks $f\colon X\rightarrow Y$ has an \textit{obstruction theory} if and only if it has a cotangent complex and is  infinitesimally cartesian.
	\end{enumerate}
\end{defi}

\begin{defi}[\protect{\cite[Def.  1.2.8.1]{TV2}}]
	Let $f\colon X\rightarrow Y$ be a morphism of derived stacks, we say $f$ is \textit{formally smooth} if for any $A\in \AniAlg{R}$, a connective $A$-module $M$ with $\pi_0M=0$, and derivation $d\in \Der_{R}(A,M)$ the natural map
	$$
	\pi_{0}X(A\oplus_d M)\rightarrow \pi_{0}(X(A)\times_{Y(A)}Y(A\oplus_d M))
	$$
	is surjective.
\end{defi}

\begin{lem}
	\label{properties of obstruction}
	Let $f\colon X\rightarrow Y$ be a morphism of derived stacks.
	\begin{enumerate}
		\item If $X$ and $Y$ have an obstruction theory, then $f$ has an obstruction theory.
		\item If $f$ has an obstruction theory, then for any morphism of derived stacks $Z\rightarrow Y$ the base change $Z\times_Y X\rightarrow Z$ has an obstruction theory.
		\item If for any $A\in \AniAlg{R}$ and any morphism $\Spec(A)\rightarrow Y$ the base change $X\times_Y \Spec(A)\rightarrow \Spec(A)$ has an obstruction theory, then $f$ has an obstruction theory.
	\end{enumerate}
\end{lem}
\begin{proof}
	This is \cite[Lem. 1.4.2.3]{TV2} but nevertheless we recall the proof.\par 
	The existence of the cotangent complex follows from Lemma \ref{properties of cotangent}.\par 
	Part \textit{1} and \textit{2} are clear by definition. For part \textit{3} let $B$ be an animated ring, $M$ a connective $B$-module with $\pi_0M=0$ and $d\in \Der_{R}(B,M)$. We need to show that the diagram
	$$
	\begin{tikzcd}
	X(A\oplus_d M)\arrow[r,""]\arrow[d,""]&Y(A\oplus_d M)\arrow[d,""]\\
	X(A)\times_{X(A\oplus M)}X(A)\arrow[r,""]&Y(A)\times_{Y(A\oplus M)}Y(A)
	\end{tikzcd}
	$$
	is a pullback diagram. Let $x\in Y(A\oplus_{d} M)$, we claim that it suffices to show that the induced morphism of the fibers of the two horizontal arrows at $x$ is an equivalence.\par 
	Indeed, assume that we have a commutative diagram
	\begin{equation}
	\label{eq.fiber.seq.lem}
	\begin{tikzcd}
	F\arrow[r,""]\arrow[d,""]& \ast\arrow[d,"x"]\\
	X(A\oplus_d M)\arrow[r,""]\arrow[d,""]&Y(A\oplus_d M)\arrow[d,""]\\
	X(A)\times_{X(A\oplus M)}X(A)\arrow[r,""]&Y(A)\times_{Y(A\oplus M)}Y(A)
	\end{tikzcd}
	\end{equation}
	where the upper square and the outer square is a pullback. Let us also consider the pullback diagram
	$$
	\begin{tikzcd}
	Z\arrow[r,"\alpha"]\arrow[d,""]&Y(A\oplus_d M)\arrow[d,""]\\
	X(A)\times_{X(A\oplus M)}X(A)\arrow[r,""]&Y(A)\times_{Y(A\oplus M)}Y(A)
	\end{tikzcd}
	$$
	(note that naturally $\fib_{x}(\alpha)\simeq F$ as the outer square of (\ref{eq.fiber.seq.lem}) is a pullback diagram).
	We have a naturally induced morphism of fiber sequences (i.e. a commutative diagram of the form)
	$$
	\begin{tikzcd}
		F\arrow[r,""]\arrow[d,"\simeq"]& X(A\oplus_d M)\arrow[d,""]\arrow[r,""]&Y(A\oplus_d M)\arrow[d,"\simeq"]\\
		\fib_{x}(\alpha)\arrow[r,""]&Z\arrow[r,""]&Y(A\oplus_d M).
	\end{tikzcd}
	$$
	The long exact homotopy sequence for fiber sequences in $\SS$ now implies the claim.\par
	That the upper square and the outer square of (\ref{eq.fiber.seq.lem}) are pullback diagrams follows from the fact that the pullback of $f$ under the morphism corresponding to $x$ has an obstruction theory.
\end{proof}

The following technical lemma shows, how liftings along square zero extensions are linked to loops in the space of derivations. This is crucial, when dealing with formal smoothness of morphisms. Lifts of morphisms along square zero extensions are controlled by the cotangent complex which is, in some cases, easier to handle.

\begin{lem}
	\label{obstruction morph imp formal smooth}
	Let $f\colon X\rightarrow Y$ be a morphism of derived stacks, and assume $f$ has an obstruction theory. Let $A\in\AniAlg{R}$, $M$ be a connective $A$-module with $\pi_0M=0$ and $d\in \Der(A,M)$ a derivation. Let $x\in X(A)\times_{Y(A)}Y(A\oplus_d M)$ be a point and $L(x)$ the fiber of $X(A\oplus_d M)\rightarrow X(A)\times_{Y(A)}Y(A\oplus_d M)$ at $x$. There exists an element $\alpha(x)\in \pi_0\Hom_{\MMod_A}(L_{f,x},M)$ such that $L(x)\simeq\Omega_{\alpha(x),0}\Hom_{\MMod_A}(L_{f,x},M)$, where we consider the pullback diagram
	$$
	\begin{tikzcd}
	\Omega_{\alpha(x),0}\Hom_{\MMod_A}(L_{f,x},M)\arrow[r,""]\arrow[d,""]&\ast\arrow[d,"\alpha(x)"]\\
	\ast\arrow[r,"0"]&\Hom_{\MMod_A}(L_{f,x},M).
	\end{tikzcd}
	$$
\end{lem}
\begin{proof}
	This is \cite[Prop. 1.4.2.6]{TV2}, but for the convenience of the reader we give a proof.\par 
	First, note that $x$ corresponds to a diagram of the form
	$$
	\begin{tikzcd}
	\Spec(A)\arrow[r,""]\arrow[d,""]&X\arrow[d,""]\\
	\Spec(A\oplus_d M)\arrow[r,""]&Y.
	\end{tikzcd}
	$$
	After composition with the natural maps, we get
	$$
	\begin{tikzcd}
	\Spec(A\oplus M)\arrow[r,"d"]\arrow[d,"s"]&\Spec(A)\arrow[r,""]\arrow[d,""]&X\arrow[d,""]\\
	\Spec(A)\arrow[r,""]&\Spec(A\oplus_d M)\arrow[r,""]&Y,
	\end{tikzcd}
	$$
	which gives a point $\alpha(x)\in \Hom_{\dSt_{A//Y}}(\Spec(A\oplus M),X)\simeq\Der_A(X/Y,M)$.
	Using that $f$ is inf-cartesian, we get a pullback diagram
	$$
	\begin{tikzcd}
	L(x)\arrow[r,""]\arrow[d,""]&\ast\arrow[d,"\alpha(x)"]\\
	\ast\arrow[r,"0"]&\Der_{A}(X/Y,M).
	\end{tikzcd}
	$$
	\par
	To see this, note the following commutative diagram with pullback squares.
	$$
	\begin{tikzcd}[column sep=1ex]
	L(x)\arrow[r,""]\arrow[d,""]&\ast\arrow[d,"x"]\\
	X(A\oplus_d M)\arrow[r,""]\arrow[d,""]&X(A)\times_{Y(A)}Y(A\oplus_d M)\arrow[d,"\alpha"]\\
	\ast\arrow[r,"0"]\arrow[d,""]&\Der_{A}(X/Y,M)\arrow[r,""]\arrow[d]&\ast\arrow[d,""]\\
	X(A)\times_{Y(A)}Y(A\oplus M)\arrow[r,""]\arrow[d,"\simeq"]&X(A\oplus M)\arrow[r,""]&X(A)\times_{Y(A)}Y(A\oplus M)\\
	X(A)\times_{Y(A)\times_{Y(A\oplus M)}Y(A)} Y(A).
	\end{tikzcd}
	$$ 
\end{proof}

\begin{lem}
	\label{affine obstruction}
	Any affine derived scheme $X\simeq \Spec(B)$ has an obstruction theory.
\end{lem}
\begin{proof}
	Certainly, $X$ has a cotangent complex by $L_{X,x}\simeq L_B\otimes_B A$, for $x\colon\Spec(A)\rightarrow X.$ So we are left to show that $X$ is infinitesimally cartesian. But this follows from compatability of the $\Hom$ functor with limits.
\end{proof}

\begin{lem}
	\label{affine formal smooth}
	Let $f\colon X\rightarrow Y$ be a morphism of affine derived schemes. If $f$ is smooth, then it is formally smooth.
\end{lem}
\begin{proof}
	This follows from \cite[Prop. 2.2.5.1]{TV2}, but for convenience of the reader, we give a proof in our setting.\par 
	Let $A$ be an animated $R$-algebra, $M$ a connected $C$-module and $d\in \Der(A,M)$. Let $x\in \pi_0 X(A)\times_{Y(A)}Y(A\oplus_d M)$ be a point. We have to show that the fiber of 
	$$
	X(A\oplus_d M)\rightarrow X(A)\times_{Y(A)}Y(A\oplus_d M)
	$$ 
	along $x$ is nonempty. By Lemma \ref{affine obstruction} and \ref{obstruction morph imp formal smooth}, it suffices to show that $\pi_0\Hom(L_{B/C}\otimes_B A, M)$ is contractible, where $X\simeq \Spec(B)\rightarrow \Spec(C)\simeq Y$. By Proposition \ref{smooth cotangent} the $A$-module $L_{B/C}\otimes_B A$ is finite projective, so especially a retract of a free module (see \cite[Cor. 7.2.2.9]{HA}) and therefore $\pi_0\Hom(L_{B/C}\otimes_B A, M)$ is a retract of a product of $\pi_0M$, which is zero by hypothesis on $M$.
\end{proof}

\begin{rem}
	We want to remark that Lemma \ref{affine formal smooth} holds more generally. An $n$-geometric morphism of derived stacks is smooth if any only if after restriction to $\Ring$ via $t_{0}$ is locally of finite presentation and it is formally smooth. This is a bit technical but a proof of this is given for example in \cite[Prop. 2.2.5.1]{TV2}.
\end{rem}

The next proposition and corollary show, how smoothness of a geometric morphism is linked to its cotangent complex. This can be seen as a globalization of Proposition \ref{smooth cotangent}.

\begin{prop}
	\label{cotangent of smooth}
	Let $f\colon X\rightarrow Y$ be an $n$-geometric morphism of derived stacks. Then $f$ has an obstruction theory. Further if $f$ is smooth, then $f$ is formally smooth and $L_f$ is perfect with Tor-amplitude in $[-n-1,0]$.
\end{prop}
\begin{proof}
	The proof of this lemma in the spectral setting is given in \cite[Prop. 4.45]{AG}. The proof in the derived setting is analogous. But for the convenience of the reader, we give a proof.\par 
	We prove this lemma by induction over $n$. For the formal smoothness part we will first reduce to the case where $Y$ is affine.\par 
	Indeed, let $B$ be an animated $R$-algebra. Note that we have to show that for any point $x\in \pi_0(X(B)\times_{Y(B)}Y(B\oplus M))$ its fiber under $X(B\oplus_d M)\rightarrow X(B)\times_{Y(B)}Y(B\oplus_d M)$ is nonempty. The point $x$ corresponds to a commutative diagram of the form 
	$$
	\begin{tikzcd}
	\Spec(B)\arrow[r,""]\arrow[d,""]&X\arrow[d,""]\\
	\Spec(B\oplus_d M)\arrow[r,""]&Y.
	\end{tikzcd}
	$$
	After base change, we get a diagram of the form
	$$
	\begin{tikzcd}
	\Spec(B)\arrow[r,""]\arrow[d,""]&X\times_Y \Spec(B\oplus_d M)\arrow[d,""]\\
	\Spec(B\oplus_d M)\arrow[r,"\id"]&\Spec(B\oplus_d M)
	\end{tikzcd}
	$$
	showing that, we can replace $f$ by the projection $X\times_Y \Spec(B\oplus_d M)\rightarrow \Spec(B\oplus_d M)$, in particular, we can assume $Y$ to be affine (this reduction is part of \cite[Prop. 2.2.5.1]{TV2}).\par 	
	Further for the  existence of an obstruction theory, we may assume without loss of generality that $Y\simeq \Spec(R)$ (see Lemma \ref{properties of obstruction} and use that affine schemes have an obstruction theory by Lemma \ref{affine formal smooth}).\par
	Let $n=-1$, then $X\simeq\Spec(A)$ and each $A$ is a smooth $R$-algebra. In particular, we see with Lemma \ref{affine obstruction} and  Proposition \ref{smooth cotangent} that $L_{A/R}$ exists and is finite projective. The formal smoothness follows from Lemma \ref{affine formal smooth}.\par 
	Now assume $n\geq 0$ and let $p\colon U\simeq \coprod_{i\in I}\Spec(A_i)\rightarrow X$ be an $n$-atlas, where $A_i$ are smooth $R$-algebras. Let $B$ be a animated $R$-algebra, $M$ be a connective $B$-module with $\pi_0M=0$ and $d\in \Der(B,M)$ a derivation.\par 
	\textit{Inf-cartesian.} For this we will follow \cite[Lem. 1.4.3.10]{TV2}.\par 
	By Lemma \ref{etale cover of triv sq zero ext} any \'etale cover $B\rightarrow B'$ gives a cartesian square of the form 
	$$
	\begin{tikzcd}
	B'\oplus_d M\arrow[r,""]\arrow[d,""]&B'\arrow[d,"d"]\\
	B'\arrow[r,"s"]&B'\oplus M,
	\end{tikzcd}
	$$
	which covers the square induced by the derivation. So to check if $X(B\oplus_d M)\simeq X(B)\times_{X(B\oplus M)}X(B)$, we can pass to an \'etale cover of $B$. Therefore, we may assume that any image $x_1\in X(B)$ of $x\in X(B)\times_{X(B\oplus M)}X(B)$ under the projection, lifts to a point in $u\in \Spec(A_i)(B)$, for some $i$. Next, we claim that the point $x$ lifts to a point $y\in  \Spec(A_i)(B)\times_{ \Spec(A_i)(B\oplus M)} \Spec(A_i)(B)$.\par
	To see this, consider the following commutative diagram
	$$
	\begin{tikzcd}
	\Spec(A_i)(B)\times_{ \Spec(A_i)(B\oplus M)} \Spec(A_i)(B)\arrow[r,"f"]\arrow[d,"p"]&X(B)\times_{X(B\oplus M)}X(B)\arrow[d,"q"]\\
	\Spec(A_i)(B)\arrow[r,""]&X(B).
	\end{tikzcd}
	$$
	Let $F(p)$ (resp. $F(q)$) denote the fiber of $u$ (resp. $x_1$) under $p$ (resp. $q$). We get a natural morphism $g\colon F(p)\rightarrow F(q)$. Moreover the fiber of $f$ along $x$ receives a natural morphism from $\fib_x(g)$. Therefore, to see that $\fib_x(f)$ is nonempty it is enough to show that $\fib_x(g)$ is nonempty. But now $g$ is naturally identified, per definition, with the morphism $\Omega_{d',0}\Der_B(A_i,M)\rightarrow \Omega_{d',0}\Der_B(X,M)$, where $d'$ is the derivation that is given by the image of $u$ (note that $X(B)\rightarrow X(B\oplus M)\rightarrow X(B)$ is equivalent to the identity). Thus the fiber of $g$ is given by $\Omega_{d',0}\Der_B(A_i/X,M)$, which is equivalent to $\Omega_{d',0}\Hom(L_{A_i/X,B},M)$ by induction hypothesis. But now, again by induction hypothesis, we can find an \'etale cover of $B$ such that $\pi_0\Hom(L_{A_i/X,B},M)=0$, since $M$ is assumed to be connected, and therefore $\Omega_{d',0}\Hom(L_{A_i/X,B},M)$ is nonempty.\par 
	Now consider the commutative digram
	$$
	\begin{tikzcd}
	\Spec(A_i)(B\oplus_d M)\arrow[r,"a"]\arrow[d,""]& \Spec(A_i)(B)\times_{ \Spec(A_i)(B\oplus M)} \Spec(A_i)(B)\arrow[d,""]\\
	X(B\oplus_d M)\arrow[r,""]&X(B)\times_{X(B\oplus M)}X(B).
	\end{tikzcd}
	$$
	By induction hypothesis this square is a pullback sqaure, further $a$ is an equivalence by affineness. Since $x$ lifts to a point in $y\in \Spec(A_i)(B)\times_{ \Spec(A_i)(B\oplus M)}\Spec(A_i)(B)$, we see that the fiber at $x$ is given by the fiber of $a$ at $y$, which is nonempty and contractible (since affine schemes are inf-cartesian by Lemma \ref{affine obstruction}).
	\par
	\textit{Existence.}	Let us look at the fiber $L_f$ of $L_{f\circ p}\rightarrow L_p$. By induction hypothesis $L_p$ and $L_{f\circ p}$ exist and if $f$ is smooth both are perfect with Tor-amplitude in $[-n,0]$ and $[0,0]$ respectively. In particular if $f$ is smooth then $L_f$ is perfect and has Tor-amplitude in $[-n-1,0]$. We have to show that $L_f$ satisfies the universal property of the cotangent complex for $f$ at any point. Let $x\colon \Spec(B)\rightarrow X$ be a morphism. We may assume, that $x$ factors through $p$, since $p$ is an effective epimorphism, so we can pass to an \'etale cover $B$ which factors through $U$. Let $y\colon \Spec(B)\rightarrow U$ be such a factorisation. We get a map
	$$
	F\colon \Der_{y}(U,N)\rightarrow \Der_{x}(X,N)
	$$
	for any connective $B$-module $N$, which is surjective.\par 
	To see the surjectivity of $F$, note that by induction hypothesis $p$ is formally smooth. Further any element in $d\in\Der_{x}(X,N)$ corresponds to a diagram of the form 
	$$
	\begin{tikzcd}
	\Spec(B\oplus N) \arrow[r,"d"] & X\\
	\Spec(B)\arrow[ur,"x", swap]\arrow[u].
	\end{tikzcd}
	$$
	Using the factorization by $U$, we get an element in $U(B)\times_{X(B)} X(B\oplus N)$. By formal smoothness of $p$, we can lift this element to an element in $U(B\oplus N)$ (note that $B\times_{0,B\oplus N[1],0}B\simeq B\oplus N$). But by construction this element has to be a derivation of $U$ at $y$.	
	\par 
	The fiber of $F$ along the derivation, which is induced by $\Spec(B\oplus N)\rightarrow \Spec(B)\xrightarrow{x}X$ is given by $\Der_{y}(U/X,N)$. Thus, we get a fiber sequence
	$$
	\Hom_B(L_p,N)\rightarrow \Hom_B(L_{f\circ p},N)\rightarrow \Der_{x}(X,N).
	$$ After delooping\footnote{For a pointed $\infty$-category $\Ccal$ a deloop of an object $c\in\Ccal$ is an object $c'\in\Ccal$, such that $c\simeq \Omega c'$. For $\Ccal = \SS$ the $\infty$-category of spaces, there is a deloop for every object. This follows from the effectivity of groupoid objects in $\SS$ (see \cite[Cor.  6.1.3.20]{HTT}) (the map $x\rightarrow \ast$, where $x\in \SS$, defines a simplicial object, which extends via the colimit to a \v{C}ech nerve).} and surjectivity of $F$, we see that $\Der_{x}(X,N)$ is the fiber of $$B\Hom_B(L_p,N)\rightarrow B\Hom_B(L_{f\circ p},N),$$ where the prefix ``$B$'' denotes the deloop, and therefore $\Der_x(X,N)\simeq\Hom_B(L_f,N)$.\par 
	To see this, note that the map from $\Der_x(X,N)$ to the fiber of $B\Hom_B(L_p,N)\rightarrow B\Hom_B(L_{f\circ p},N)$ is by the five-lemma an equivalence on the homotopy groups.\par 
	\textit{Formal smoothness.} This is part of \cite[Prop. 2.2.5.1]{TV2}.\par 
	Assume $f$ is smooth and $x\colon \Spec(B)\rightarrow X$ is a point. By the above $f$ has an obstruction theory. Therefore by Lemma \ref{obstruction morph imp formal smooth} it suffices to show that $\pi_0\Hom(L_{f,x},M)$ is contractible. But this follows from 
	$$
	\pi_0\Hom(L_{f,x},M)\simeq \pi_0 (L_{f,x}^\vee \otimes_B M)\simeq \pi_0L_{f,x}^\vee \otimes_{\pi_0B} \pi_0M\simeq 0,
	$$
	by connectedness of $M$ (note that the above construction of the relative cotangent complex implies that $L_{f}$ is perfect in the smooth case and therefore dualizable and the dual is connective).
\end{proof}

\begin{cor}
	\label{cotangent implies smooth}
	Let $f\colon X\rightarrow Y$ be an $n$-geometric morphism of derived stacks. Then $f$ is smooth if and only if $t_{0}f$ is locally of finite presentation and $L_f$ is perfect and has Tor-amplitude in $[-n-1,0]$.
\end{cor}
\begin{proof}
	The proof is the same as in the spectral setting presented in \cite[Prop. 4.46]{AG} using Proposition \ref{cotangent of smooth}. But for the convenience of the reader, we recall the proof.\par 
	We may assume that $Y\simeq\Spec(A)$ is affine (use Lemma \ref{properties of cotangent}). Let us fix an $n$-atlas $p\colon U\coloneqq \coprod_{i\in I} \Spec(T_i)\rightarrow X$. The "only if" part is Proposition \ref{cotangent of smooth} and the fact that the $T_i$ are smooth $A$-algebras by construction and thus are locally of finite presentation (see Proposition \ref{smooth cotangent}).\par
	For the "if" part assume the $\pi_{0}T_i$ are locally of finite presentation over $\pi_{0}B$, $L_f$ exists, is perfect and has Tor-amplitude in $[-n-1,0]$. We have a cofiber sequence
	$$
	p^*L_f\rightarrow L_{U/A}\rightarrow L_p,
	$$ 
	where by construction $L_f$ and $L_p$ are perfect with Tor-amplitude in $[-n-1,0]$ and $[-n,0]$ respectively (for the existence and Tor-amplitude of $L_p$, we use Proposition \ref{cotangent of smooth}). Therefore, $L_{U/A}$ is also perfect with Tor-amplitude in $[-n-1,0]$. But since $U$ is the disjoint union of affines the Tor-amplitude of $L_{U/A}$ is concentrated in $[0,0]$ and thus $L_{U/A}$ is perfect and finite projective, which implies that $\Spec(T_i)\rightarrow \Spec(A)$ is smooth (see Lemma \ref{smooth cotangent}).
\end{proof}

\begin{cor}
	\label{global lfp cotangent}
	Let $f\colon X\rightarrow Y$ be an $n$-geometric morphism of derived stacks locally of finite presentation. Then $L_{f}$ is perfect. 
\end{cor}
\begin{proof}
	Per definition of perfect quasi-coherent modules over a derived stack, we have to check that for any point $x\colon \Spec(A)\rightarrow X$ the cotangent complex $L_{f,x}$ is a perfect $A$-module. By Lemma \ref{properties of cotangent}, we know that the cotangent complex of the projection $\pr\colon X\times_{Y,f\circ x}\Spec(A)\Spec(A)$ at the point induced by $x$ is equivalent to $L_{f,x}$. So without loss of generality, we may assume that $Y\simeq \Spec(B)$ is affine.\par 
	Since $f$ is $n$-geometric and locally of finite presentation, we know that there exists an $n$-atlas $(p_{i}\colon\Spec(A_{i})\rightarrow X)_{i\in I}$ such that $A_{i}$ are locally of finite presentation over $A$. Since perfect quasi-coherent modules satisfy fpqc descent (see Remark \ref{descent qcoh}), we have that $L_{f}\in\QQCoh_{\perf}(X)$ if and only if each $p_{i}^{*}L_{f}$ is perfect. But by Lemma \ref{relative global cotangent sequence}, we have the following cofiber sequence 	
	$$
	p_{i}^{*}L_{f}\rightarrow L_{\Spec(A_{i})/\Spec(A)}\rightarrow L_{p_{i}}.
	$$
Since $A_{i}$ is locally of finite presentation over $A$, we know by Proposition \ref{lfp perfect cotangent}, that $L_{\Spec(A_{i})/\Spec(A)}$ is perfect and since by definition $p_{i}$ is smooth, we have with Proposition \ref{cotangent of smooth} that indeed $p_{i}^{*}L_{f}$ is perfect.
\end{proof}

The last part of this section is dedicated to show that a geometric derived stack $X$ is automatically hypercomplete for the \'etale topology. The idea is to show that for any animated $R$-algebra $A$, we have $X(A)\simeq \lim_{n}X(A_{\leq n})$. Then we reduce to the case, where we look at $n$-truncated sheaves, which are always hypercomplete.

\begin{lem}
\label{nilcomplete}
	Let $X$ be an $n$-geometric derived stack for some $n\geq -1$ and $A$ an animated ring. Then the natural morphism $X\rightarrow \lim_{n}X\circ\tau_{\leq n}$ is an equivalence.
\end{lem}
\begin{proof}
This is analogous to \cite[Prop. 5.3.7]{DAG}.\par  
	We will do this by induction over $n$. This is certainly true for if $X$ is affine. So assume that $n\geq 0$ and let $p\colon U\coloneqq \coprod_{i\in I}\Spec(A_{i})\rightarrow X$ be an $n$-atlas.\par 
	By definition $U\times_{X} U$ is $(n-1)$-geometric, this also holds for every successive fiber product, i.e. every element of the \v{C}ech nerve $\Cv(U/X)_{\bullet}$ is $(n-1)$-geometric. Since $p$ is an effective epimorphism, we have that the natural map $\colim_{\Delta}\Cv(U/X)_{\bullet}\rightarrow X$ is an equivalence. By induction hypothesis, we have for every $[n]\in \Delta$ that the natural map $\Cv(U/X)_{[n]}\rightarrow \lim\Cv(U/X)_{[n]}\circ\tau_{\leq n}$ is an equivalence. Thus, also its colimit under $\Delta$ is an equivalence, so we get an induced commutative diagram
	$$
	\begin{tikzcd}
		\colim_{\Delta}\Cv(U/X)_{[n]}\arrow[r,""]\arrow[d,""]&\colim_{\Delta}\lim_{n}\Cv(U/X)_{[n]}\circ\tau_{\leq n}\arrow[d,""]\\
		X\arrow[r,""]& \lim_{n}X\circ\tau_{\leq n},
	\end{tikzcd}
	$$
where the top arrow and left vertical arrow are equivalences and the right vertical arrow is a monomorphism. Thus the bottom vertical arrow is an equivalence if $\lim_{n} U\circ\tau_{\leq n}\rightarrow \lim_{n}X\circ \tau_{\leq n}$ is an effective epimorphism. Let $x\in \lim_{n}X(A_{\leq n})$ and consider the projection onto $X(A_{\leq 0})$, denoted by $x_{0}$. Then we can find an \'etale cover $\widetilde{\pi_{0}A}$ of $A_{\leq 0}\simeq \pi_{0}A$ such that $x_{0}$ has a lift in $U(\widetilde{\pi_{0}A})$. By Proposition \ref{lift etale} there is an \'etale cover $\widetilde{A}$ of $A$ such that $\pi_{0}\widetilde{A}\simeq \widetilde{\pi_{0}A}$. In particular, we see that we can lift the image of $x_{0}$ in $X(\widetilde{A}_{\leq 0})$ under $U(\widetilde{A}_{\leq 0})\rightarrow X(\widetilde{A}_{\leq 0})$. Now let $x_{n}$ be the image of $x$ in $X(A_{\leq n})$. We will show the result by induction. Assume the argument holds for $n-1$. In particular, let $u_{n-1}$ be the lift of $x_{n-1}$ under $U(\widetilde{A}_{\leq n-1})\rightarrow X(\widetilde{A}_{\leq n-1})$. It is enough to prove that we can find a lift of $(u_{n-1},x_{n})$ under $U(\widetilde{A}_{\leq n})\rightarrow U(\widetilde{A}_{\leq n-1})\times_{X(\widetilde{A}_{\leq n-1})}X(\widetilde{A}_{\leq n})$, since then for all $n\in \NN_{0}$ there is a lift $u_{n}$ of $x_{n}$ compatible with the maps in the limit, i.e. we get an element $u\in\lim_{n}U(\widetilde{A}_{\leq n})$ that maps to the image of $x$ in $\lim_{n}X(\widetilde{A}_{\leq n})$. But this follows from formal smoothness of $U\rightarrow X$ (see Proposition \ref{cotangent of smooth}) and the fact that the map $A_{\leq n}\rightarrow A_{\leq n-1}$ is a square zero extension (see Lemma \ref{postnikov square zero}).
\end{proof}

\begin{lem}
\label{geometric truncated}
	Let $X\rightarrow Y$ be an $n$-geometric morphism and $A$ be a $k$-truncated animated $R$-algebra. Then $X(A)\rightarrow Y(A)$ is $(n+k+1)$-truncated.
\end{lem}
\begin{proof}
This is a consequence of Lemma \ref{nilcomplete} and analogous to \cite[Cor. 5.3.8]{DAG}.\par
	We have to show that the fiber of $X(A)\rightarrow Y(A)$ is $(n+k+1)$-truncated. By Lemma \ref{nilcomplete} it suffices to show that for all $n\in \NN_{0}$ the map $X(A_{\leq j})\rightarrow X(A_{\leq j-1})\times_{Y(A_{\leq n-1})}Y(A_{\leq j})$ is $(n+j+1)$-truncated, whenever $j\leq k$. But from Lemma \ref{postnikov square zero}  and Lemma \ref{obstruction morph imp formal smooth} the fiber of the previous map is given by the loop of $\Hom_{\MMod_{A}}(L_{X/Y,A}, \pi_{j}A[j])$ which by adjunction is $(n+j+1)$-truncated, since by definition $L_{X/Y}[n]$ is connective\footnote{Note that $\pi_{n+j+1+k}\Hom_{\MMod_{A}}(L_{X/Y,A}, \pi_{j}A[j-1])\cong\pi_{0}\Hom_{\MMod_{A}}(L_{X/Y,A}[n+j+1+k], \pi_{j}A[j+1])$ and since $L_{X/Y,A}[n+j+1+k]\in(\MMod_{A})_{\geq j+1+k}$ and $ \pi_{j}A[j+1]\in(\MMod_{A})_{\leq -j+1}$, we see by definition of the $t$-structures that $\pi_{n+j+1+k}\Hom_{\MMod_{A}}(L_{X/Y,A}, \pi_{j}A[j+1])\cong0$ for $k\geq 1$.}. 
\end{proof}

\begin{lem}
\label{geometric hypercomplete}
	Let $X$ be an $n$-geometric stack, then $X$ is hypercomplete.
\end{lem}
\begin{proof}
This is a direct consequence of Lemma \ref{nilcomplete} and \ref{geometric truncated} and the fact that truncated $\infty$-topoi are automatically hypercomplete. This is analogous to \cite[Cor. 5.3.9]{DAG} but anyway we will explain this.\par
	By Lemma \ref{nilcomplete}, we have $X\simeq X\circ \tau_{\leq n}$ and for any $k$-truncated animated ring $A$, we have that $X(A)$ is $(n+k+1)$-truncated (see Lemma \ref{geometric truncated}), in particular $X\circ\tau_{\leq n}$ is hypercomplete (see \cite[Lem. 6.5.2.9]{HTT}) and since the $\infty$-topos of hypercomplete sheaves has limits, we have that $X$ is hypercomplete.
\end{proof}


\section{The stack of perfect modules}
\label{sec:perf}

In this section, we want to prove that the derived stack of perfect modules is locally geometric. This was already proven in \cite{TVaq} in the model categorical setting and in \cite{AG} in the spectral setting, but we recall the proof in its entirety in our setting.\par
We recall some lemmas needed for the proof, as they will become important later on when analyzing the substacks of derived $F$-zips.
	
\begin{lem}
\label{upper semi-cont qc}
	Let $A$ be a commutative ring and $P$ be a perfect complex of $A$-modules and let $n\in \NN_{0}$. Further, for $k\in \ZZ$ let $\beta_{k}\colon \Spec(A)_{\cl}\rightarrow \NN_{0}$ be the function given by $s\mapsto \dim_{\kappa(s)}\pi_{k}(P\otimes_{A}\kappa(s))$. Then $\beta_{k}^{-1}([0,n])$ is quasi-compact open.
\end{lem}
\begin{proof}
	By \cite[0BDI]{stacks-project} $\beta_{k}$ is upper semi-continuous and locally constructible. As $\Spec(A)_{\cl}$ is affine it is quasi-compact quasi-separated and so we see that $\beta_{k}^{-1}([0,n])$ is quasi-compact open.
\end{proof}

\begin{rem}
\label{tor and type}
	Let $A$ be a commutative ring and $P$ be a perfect complex of $A$-modules. Let $I\subseteq \ZZ$ be a finite subset and for $k\in \ZZ$ let $\beta_{k}$ be as in Lemma \ref{upper semi-cont qc} . Assume that $\beta_i \not =0$ for $i\in I$ and zero everywhere else. Then using \cite[0BCD,066N]{stacks-project}, we see that $P$ has Tor-amplitude in $[\min(I),\max(I)]$.
\end{rem}

\begin{lem}
\label{lem perfect zero}
	Let $A$ be a commutative ring and $P$ be a perfect complex of $A$-modules. Then there exists a quasi-compact open subscheme $U\subseteq\Spec(A)_{\cl}$ with the following property,
	\begin{enumerate}
		\item[$\bullet$] an affine scheme morphism $\Spec(B)_{\cl}\rightarrow \Spec(A)_{\cl}$ factors through $U$ if and only if $P\otimes_{A} B\simeq 0$.
	\end{enumerate}
\end{lem}
\begin{proof}
	Let $\beta_{k}$ be as in Lemma \ref{upper semi-cont qc}. Then we set
	$$
		U\coloneqq\bigcap_{k\in\ZZ} \beta_{k}^{-1}(\lbrace 0 \rbrace ).
	$$
	As $P$ is perfect, so in particular has finite Tor-amplitude, this intersection has only finitely many pieces that are non equal to $\Spec(A)_{\cl}$. Therefore, $U$ is a finite intersection of quasi-compact opens in an affine scheme (see Lemma \ref{upper semi-cont qc}) and thus quasi-compact open.\par 
	Now assume we have a morphism $\Spec(B)_{\cl}\rightarrow \Spec(A)_{\cl}$ such that $P\otimes_{A}B = 0$. Then certainly for any $b\in B$ and all $i\in\ZZ$ we have $\dim_{\kappa(b)}\pi_{i}(P\otimes_{A}B\otimes_{B}\kappa(b))=0$. Let $a\in\Spec(A)$ be the image of $b$. Then for all $i\in\ZZ$ we have the following equalities 
	\begin{align*}
		\dim_{\kappa(b)}\pi_{i}(P\otimes_{A}B\otimes_{B}\kappa(b)) &=\dim_{\kappa(b)}\pi_{i}(P\otimes_{A}\kappa(a)\otimes_{\kappa(a)}\kappa(b))\\
		&=\dim_{\kappa(b)}\pi_{i}(P\otimes_{A}\kappa(a))\otimes_{\kappa(a)}\kappa(b)\\
		&=\dim_{\kappa(a)}\pi_{i}(P\otimes_{A}\kappa(a)),
	\end{align*}
	where we use flatness of field extensions in the second equality. Therefore, we see that $\Spec(B)\rightarrow \Spec(A)$ factors through $U$.\par
	For the other direction assume that $\Spec(B)_{\cl}\rightarrow \Spec(A)_{\cl}$ factors through $U$. Then for any $b\in \Spec(B)$ and any $i\in \ZZ$, we have that $\pi_{i}(P\otimes_{A}B\otimes_{B}\kappa(b)) = 0$. By Remark \ref{tor and type}, we see that $P\otimes_{A}B$ is given by a finite projective module $M$ concentrated in one degree. The fiberwise dimension of $M$ is equal to $0$ by assumption and thus by Nakayama $M=0$.
\end{proof}

The next lemma shows that the vanishing locus of perfect complexes is quasi-compact open. This will be applied to the cofiber of morphisms of perfect complexes. In particular, the locus classifying equivalences between fixed perfect modules is therefore quasi-compact open.

\begin{lem}
\label{equiv zariski open}
Let $A\in\AniAlg{R}$ and $P$ be a perfect $A$-module. Define the derived stack $V_P$ via $V_P(B) =$ full sub-$\infty$-category of $\Hom_{\AniAlg{R}}(A,B)$ consisting of morphisms $u\colon A\rightarrow B$ such that $P\otimes_{A,u}B\simeq 0$. This is a quasi-compact open substack of $\Spec(A)$. 
\end{lem}

\begin{proof}
	This is \cite[Prop. 2.23]{TVaq} translated to our setting. But for the convenience of the reader, we give a proof.
	\par
	Consider $Q\coloneqq P\otimes_A \pi_0A$. Then $Q$ is a perfect complex of $\pi_0A$-modules. Lemma \ref{lem perfect zero} shows that there is a quasi-compact open subscheme $U\subseteq\Spec(\pi_0 A)_{\cl}$, such that for any point $u\colon \Spec(R')_{\cl}\rightarrow \Spec(\pi_0A)_{\cl}$, where $R'$ is a commutative ring, the module $Q\otimes_{\pi_0 A} R'$ is isomorphic to $0$ if and only if $u$ factors through $U$.\par 
	Let $f_1,\dots,f_n\in\pi_0A$, such that the $\pi_0A_{f_i}$ covers $U$. Then $V\coloneqq \Im(\coprod_{i=1}^{n} \Spec(A[f_i^{-1}]))$, the image of $\coprod_{i=1}^{n}\Spec(A[f_{i}^{-1}])\rightarrow \Spec(A)$, is equivalent to $V_P$.\par 
	Indeed, take a morphism $u\colon A\rightarrow B$ in $\AniAlg{R}$. Then $u\in V(B)$ if and only if there exists an $i$ such that $\pi_0u(f_i)$ is \'etale locally invertible in $\pi_0B$ (see Remark \ref{lift along affine}). This is equivalent to $P\otimes_A \pi_0 B\simeq0$ by the choice of the $f_i$. This again is equivalent to $P\otimes_A B\simeq 0$.\par
	To see this, assume the $P\otimes_A B\not\simeq 0$ and take the minimal $i\in\ZZ$ with $\pi_i (P\otimes_A B)\not\simeq 0$. Consider the Tor spectral sequence 
	$$
	E_2^{p,q}=\Tor_p^{\pi_\ast B}(\pi_\ast(P\otimes_A B),\pi_0B)_q\Rightarrow \pi_{p+q}(P\otimes_A \pi_0B).
	$$
	By the definition of the graded tensor product, we see that $E^{0,q}_2 = \pi_q(P\otimes_A B)$ for $q= i$ and $0$ for $q<i$. As explained in Remark \ref{graded free res}, we can choose a graded free resolution of $P\otimes_{A}B$ such that each term of the resolution concentrated in degrees $\geq i$. Therefore, the spectral sequence is concentrated in the quadrant where the left lower corner is at $p=0$ and $q=i$. Thus, $\pi_{i}(P\otimes_{A}B)\cong E_{2}^{0,i}\cong \pi_{i}(P\otimes_{A}\pi_{0}B)\cong 0$ contradicting the assumption.\par 
	The geometricity follows from Lemma \ref{smooth affine geometric}.
\end{proof}

Next, we show that the stack classifying morphisms between perfect modules\footnote{Note that for two perfect $A$-modules $P,Q$ over some animated ring $A$, we have $\Hom_{\MMod_{B}}(P\otimes_{A}B,Q\otimes_{A}B)\simeq \Hom_{\MMod_{A}}(P\otimes Q^{\vee},B)$ as shown in the proof of Lemma \ref{diagonal geometric}.} is actually geometric and in good cases smooth. Since derived $F$-zips will come with two bounded perfect filtrations (i.e. finite chains of morphisms of perfect modules), this lemma is crucial for the geometricity of derived $F$-zips.

\begin{lem}
	\label{spec sym geometric}
	Let $A$ be an animated $R$-algebra. Let $P$ be a perfect $A$-module with Tor-amplitude concentrated in $[a,b]$ with $a\leq 0$.
	Then the derived stack 
	\begin{align*}
		F^{A}_P\colon \AniAlg{A}&\rightarrow \SS\\
		B&\mapsto \Hom_{\MMod_A}(P,B)
	\end{align*}
	is $(-a-1)$-geometric and locally of finite presentation over $\Spec(A)$\footnote{Certainly, we can view $F_{P}^{A}$ as a derived stack over $R$ with a morphism to $\Spec(A)$. So for any animated $R$-algebra $C$ that does not come with a morphism $A\rightarrow C$ the value of $F_{P}^{A}$ is empty.}. Further, the cotangent complex of $F_P$ at a point $x\colon \Spec(B)\rightarrow F^{A}_P$ is given by 
	$$
	L_{F_P,x}\simeq P\otimes_A B.
	$$
	\par 
	In particular, if $b\leq0$, then $F_P$ is smooth.
\end{lem}
\begin{proof}
	Before showing the geometricity, let us calculate the space of derivations of $F_{P}$ and hence the cotangent complex.\par
	Let $x\colon \Spec(B)\rightarrow F^{A}_P$ be a morphism of derived stacks corresponding to a morphism $f\colon P\rightarrow B$ in $\MMod_{A}$ and $M$ be a connective $B$-module. We have that $\Der_{x}(F^{A}_{P}/A,M)$ is given by the fiber of $\Hom_{A}(P,B\oplus M)\rightarrow \Hom_{A}(P,B)$ at $f$. The underlying $R$-module of $B\oplus M$ is per construction the direct sum of the underlying $R$-module of $B$ and of $M$. Therefore, any morphism $P\rightarrow B\oplus M$ is uniquely up to homotopy characterized by a morphism $P\rightarrow B$ and $P\rightarrow M$ and thus, we see that $\Der_{x}(F^{A}_{P}/A,M)\simeq \Hom_{A}(P,M)\simeq \Hom_{A}(P\otimes_{A}B,M)$. Hence, we have $L_{F_{P},x}\simeq P\otimes_{A}B$.\par 
	Let us conclude the rest of the proof, which we will prove by induction on $a$.\par
	If $a= 0$, then $P$ is connective and $F^{A}_P\simeq \Hom_{\AniAlg{A}}(-,\Sym_A P)$ which has the desired properties.\par 
	Now assume $a<0$. By Lemma \ref{general props of Tor} we have $P\simeq \fib(Q\rightarrow M[a+1])$, where $Q$ has Tor-amplitude in $[a+1,b]$ and $M$ is a finite projective $A$-module. Thus we get a fiber sequence 
	$$F^{A}_{M[a+1]}\rightarrow F^{A}_Q\rightarrow F^{A}_P\rightarrow F_{M[a]}.$$
	By induction hypothesis $F_Q$ is $(-a-2)$-geometric and locally of finite presentation. We will see that the map $p\colon F^{A}_Q\rightarrow F^{A}_P$ is an effective epimorphism.\par
	Indeed, note that the above fiber sequence and projectivity of $M$ imply that $\pi_0p$ is surjective and thus $p$ is an effective epimorphism (see Remark \ref{effective epi}).
	\par 
	The diagonal $F^{A}_Q\times_{F^{A}_P}F^{A}_Q$ is given by $F^{A}_{Q\oplus_P Q}$\footnote{Here $Q\oplus_{P}Q$ is defines as the pushout of the morphism $P\rightarrow Q$ with itself.}, which will be $(-a-2)$-geometric with smooth projections to $F^{A}_Q$.\par 
	To see this, note that we have a fiber sequence $Q\rightarrow Q\oplus_PQ\rightarrow M[a+1]$ which has a retract. Thus the natural map $Q\oplus_PQ\rightarrow Q\oplus M[a+1]$ is an equivalence on the level of homotopy groups by the splitting lemma (the induced exact sequences are short exact, using the retract) and therefore $Q\oplus_PQ\simeq Q\oplus M[a+1]$. Hence, $F^{A}_{Q\oplus_P Q}\simeq F^{A}_Q\times F^{A}_{M[a+1]}$, which is the pullback of $(-a-2)$-geometric stack and thus itself geometric. \par
	Also the projection to $F^{A}_Q$ is smooth, because $F^{A}_{M[a+1]}$ is smooth (the smoothness of $F^{A}_{M[a+1]}$ follows since $L_{F^{A}_{M[a+1]},x}\simeq M[a+1]\otimes_A B$ at a point $x\colon \Spec(B)\rightarrow F^{A}_{M[a+1]}$ and thus has Tor-amplitude in $[a+1,0]$, which concludes (see Corollary \ref{cotangent implies smooth})).
	\par
	By Proposition \ref{diag + proj geometric}, we see that $F^{A}_P\rightarrow\Spec(A)$ is a quasi-compact $(-a-1)$-geometric stack locally of finite presentation.\par 
	\par 
	If $b\leq 0$, then $L_{F^{A}_P}$ is perfect with Tor-amplitude concentrated in degree $[a,0]$ and therefore $F^{A}_P$ is smooth by Corollary \ref{cotangent implies smooth}.	
\end{proof}

\begin{rem}
	A variant of Lemma \ref{spec sym geometric} in the spectral setting can be found in \cite[Thm. 5.2]{AG}. Alternatively, one can look at the proofs given in \cite[Lem. 3.9]{TVaq} and \cite[3.12]{TVaq} to construct a proof in the model categorical setting.
\end{rem}

\begin{lem}
	\label{diagonal geometric}
	The diagonal map $\PPerf^{[a,b]}\rightarrow \PPerf^{[a,b]}\times_{R}\PPerf^{[a,b]}$ is $(b-a)$-geometric and locally of finite presentation.
\end{lem}
\begin{proof}
	This is part of the proof of \cite[Thm. 5.6]{AG} translated to our setting. For the convenience of the reader, we give a proof.\par
	A morphism $\Spec(A)\rightarrow  \PPerf^{[a,b]}\times_{R}\PPerf^{[a,b]}$ corresponds to two perfect modules $P,Q$ with Tor-amplitude concentrated in $[a,b]$. The pullback under the diagonal classifies equivalences  between $P$ and $Q$. This is an open, $0$-geometric substack of 
	\begin{align*}
	\Hom_{\MMod_A}(P\otimes_A Q^\vee,-) \simeq \Hom_{\MMod_A}(P,Q\otimes_A-)
	\simeq \Hom_{\MMod_-}(P\otimes_A -,Q\otimes_A-),
	\end{align*} (note that perfect modules are dualizable).\par 
	To see this, note that for any morphism $\Spec(B)\rightarrow \Hom_{\MMod_A}(P\otimes_A Q^\vee,-)$, given by a morphism $\varphi\colon P\otimes_A B\rightarrow Q\otimes_A B$, the stack $\Equiv(P,Q)\times_{\Hom_{\MMod_A}(P\otimes_A Q^\vee,-)}\Spec(B)$ classifies morphisms $u\colon B\rightarrow C$, where $\cofib\varphi\otimes_{B,u} C\simeq 0$, which is an open, $0$-geometric substack of $\Spec(B)$ by Lemma \ref{equiv zariski open}.
	\par 
	Now $P\otimes Q^\vee$ is a perfect module of Tor-dimension $[a-b,b-a]$ (see Lemma \ref{general props of Tor}) and thus Lemma \ref{spec sym geometric} concludes the proof.
\end{proof}

\begin{defi}
	Let $n\in \NN$ and $A\in\AniAlg{R}$. We denote the $\infty$-category of finite projective $A$-modules of rank $n$ with $\BGL_{n}(A)$. 
\end{defi}

\begin{lem}
	Let $n\in \NN$. The functor $A\mapsto \BGL_{n}(A)$ from $\AniAlg{R}$ to $\ICat$ satisfies fpqc descent.
\end{lem}
\begin{proof}
	We already know that modules satisfy decent so it is enough to check that an $A$-module $M$ is finite projective of rank $n$ if it is after base change to an fpqc-cover $(A\rightarrow A_{i})_{i\in I}$. Note that $\pi_{0}A\rightarrow \pi_{0}A_{i}$ is faithfully flat for every $i\in I$. Now assume that $M\otimes_{A}A_{i}$ is finite projective of rank $n$. Then it is in particular flat and we will show first that $M$ is flat over $A$. By flatness, the natural map $$\pi_{j}A_{i}\otimes_{\pi_{0}A}\pi_{0}M
\cong \pi_{j}A_{i}\otimes_{\pi_{0}A_{i}}\pi_{0}M\otimes_{\pi_{0}A}\pi_{0}A_{i}\cong\pi_{j}A_{i}\otimes_{\pi_{0}A_{i}}\pi_{0}(M\otimes_{A}A_{i})\rightarrow \pi_{j}(M\otimes_{A}A_{i})$$
is an equivalence. By flatness of $A\rightarrow A_{i}$, we have $\pi_{j}A_{i}\cong \pi_{j}A\otimes_{\pi_{0}A}\pi_{0}A_{i}$. Hence, we have $\pi_{j}A_{i}\otimes_{\pi_{0}A}\pi_{0}M\cong \pi_{j}A\otimes_{\pi_{0}A}\pi_{0}A_{i}\otimes_{\pi_{0}A}\pi_{0}M$. By faithfully flatness the map $\pi_{j}A\otimes_{\pi_{0}A}\pi_{0}M\rightarrow \pi_{j}M$ is an equivalence if and only if it so after base change to $\pi_{0}A_{i}$ for all $i\in I$. But the above shows that this base change gives the map $$\pi_{j}A\otimes_{\pi_{0}A}\pi_{0}A_{i}\otimes_{\pi_{0}A}\pi_{0}M\rightarrow \pi_{j}(M\otimes_{A}A_{i})$$ and flatness of $A_{i}$ over $A$ shows that $\pi_{j}(M\otimes_{A}A_{i})\cong \pi_{j}M\otimes_{\pi_{0}A}A_{i}$ (see \cite[Prop. 7.2.2.13]{HA}) so indeed, $\pi_{j}A\otimes_{\pi_{0}A}\pi_{0}M\rightarrow \pi_{j}M$ is an equivalence. Therefore, $M$ is flat over $A$. \par 
Now a flat module is finite projective of rank $n$ if it is so on $\pi_{0}$ (see Lemma \ref{flat proj} and use the definition of finite projectiveness) but this follows from classical faithfully flat descent.
\end{proof}

\begin{remark}
	The inclusion of $\SS\hookrightarrow \ICat$ is left adjoint to the functor $(-)^{\simeq}$ that passes to the largest Kan complex contained in an $\infty$-category (see \cite[Prop. 1.2.5.3]{HTT}). Therefore if $F\colon \Ccal\rightarrow \ICat$ is a (hypercomplete) sheaf for some Grothendieck topology on $\Ccal$ then also $F^{\simeq}\coloneqq(-)^{\simeq}\circ F$ is one.
\end{remark}

\begin{defi}
	We define the derived stack classifying vector bundles as the stack
	\begin{align*}
		\BGL_{n,R}\colon \AniAlg{R}&\rightarrow \SS\\
		A&\mapsto \BGL_{n}(A)^{\simeq}.
	\end{align*}
	Further, we denote by $\GL_{n,R}$ the loop under the map $\Spec(R)\rightarrow \BGL_{n,R}$, which is given for an animated $R$-algebra $A$ by $\ast\mapsto A^{n}$, i.e. we have the following pullback diagram in $\dSt_{R}$
	$$
	\begin{tikzcd}
		\GL_{n,R}\arrow[r,""]\arrow[d,""]& \Spec(R)\arrow[d,""]\\
		\Spec(R)\arrow[r,""]&\BGL_{n,R}
	\end{tikzcd}
	$$
	(note that for a commutative ring $A$, we have that $\BGL_{n,R}(A)$ is the groupoid of rank $n$ vector bundles on $A$ and thus $\GL_{n,R}(A)$ is indeed given by the points of the general linear group scheme of rank $n$).
\end{defi}

\begin{lem}
	\label{proj geometric}
	Let $\Proj_R$ denote the derived stack classifying finite projective modules. Then $\Proj_R\simeq \coprod_{n\in\NN} \BGL_{n,R}$, in particular $\Proj_R$ is $1$-geometric and smooth.\par 
	Further, $\GL_{n,R}$ is an affine derived scheme and $\Proj_R$ has an affine diagonal.
\end{lem}
\begin{proof}
	The proof is the same as \cite[Cor. 1.3.7.12]{TV2} but for the convenience of the reader, we give a sketch.\par
	That $\Proj_R\simeq\coprod_{n\in \NN} \BGL_{n,R}$, where $\BGL_{n,R}$ denotes the stack of finite projective modules of rank $n$, is clear. So it suffices to show that $\BGL_{R,n}$ is a $1$-geometric smooth stack. It is enough to show that $\GL_{n,R}\rightarrow \Spec(R)$ is $0$-geometric smooth. Then $\GL_{n,R}\rightarrow \Spec(R)$ defines a $0$-Segal groupoid (see \cite[Def. 1.3.4.1]{TV2}) and $\BGL_{R,n}$ is $1$-geometric (see \cite[Prop. 1.3.4.2]{TV2}). That $\BGL_{n,R}$ is smooth follows from the fact that the natural morphism $\Spec(R)\rightarrow \BGL_{n,R}$ gives a $1$-atlas.\par 
	The claim about $\GL_{n,R}$ follows in the following way. The stack $\GL_{n,R}$ is equivalent to the stack classifying automorphisms of $R^n$, i.e. $\GL_{n,R}(A)\simeq \Equiv_{A}(A^n)$, for $A\in \AniAlg{R}$. But by Lemma \ref{equiv zariski open} this is a $0$-geometric open substack of $F_{R^{n^2}}\simeq \Spec(\Sym_R(R^{n^2}))$, which is a $(-1)$-geometric smooth stack.\par 
	Alternatively, one could follow \cite[Prop. 1.3.7.10]{TV2} and show directly that the inclusion $\iota\colon \GL_{n,R}\hookrightarrow F_{R^{n^2}}$ is representable and \'etale by showing that for any point $x\in F_{R^{n^2}}(A)$, we have $\iota^{-1}(x)\simeq \Spec(A[\det(x)^{-1}])$. In particular, we see in this way that $\GL_{n,R}$ is representable by an affine derived scheme. This also shows that $\BGL_{n,R}$ has an affine diagonal, since $\Spec(R)\times_{\Spec(R)}\Spec(R)\rightarrow \BGL_{n,R}\times_{\Spec(R)}\BGL_{n,R}$ is a $1$-atlas by the above and so, we have a pullback diagram of the form
	$$
	\begin{tikzcd}
		\GL_{n,R}\arrow[r,""]\arrow[d,""]& \Spec(R)\simeq\Spec(R)\times_{\Spec(R)}\Spec(R)\arrow[d,""]\\
		\BGL_{n,R}\arrow[r,"\Delta"]&\BGL_{n,R}\times_{\Spec(R)}\BGL_{n,R},
	\end{tikzcd}
	$$
	which shows that the diagonal is affine, since this can be tested after passing to a cover by affines (see \cite[Lem. 1.3.2.8]{TV2}).
\end{proof}

\begin{rem}
\label{kan fib pull}
	For the proof of the next theorem, we want to remark some generalities about pullbacks of Kan complexes.\par 
	Let $X,Y$ be Kan complexes, i.e. elements in $\SS$, and assume we have a morphism $X\rightarrow\Fun(\del\Delta^{1},Y)$ in $\SS$. We want to compute the following pullback in $\SS$
	$$
	\begin{tikzcd}
		W\arrow[r,""]\arrow[d,""]& \Fun(\Delta^{1},Y)\arrow[d,"i"]\\
		X\arrow[r,""]&\Fun(\del\Delta^{1},Y),
	\end{tikzcd}
	$$ 
	where $i$ is given by the restriction.
	In general this is a pullback in the $\infty$-categorical sense, which can also be computed on the level of model categories via the homotopy pullback (recall that $\SS$ is the $\infty$-category associated to the model category of simplicial sets with the usual model structure (weak equivalences are given by weak equivalences of the underlying Kan complexes and fibrations are Kan fibrations)). We claim that the homotopy pullback of the underlying Kan complexes is equivalent in $\SS$ to the ordinary pullback of simplicial sets if $i$ is a Kan fibration (i.e. $i$ is a fibration in the model category of simplicial sets and since each simplicial set involved is already a Kan complex they are per definition fibrant).\par
	Indeed, this is a classical result and remarked in \cite[Rem. A.2.4.5]{HTT} but we will shortly sketch the idea behind it. Let $\Abf$ be a combinatorial model category (e.g. the category of simplicial sets with the model structure explained above) and let $I$ be the diagram category given by three objects $\lbrace 0,1,2\rbrace$ together with morphisms $0\rightarrow 2$, $1\rightarrow 2$ and identities. One can attach the injective model structure onto the functor category $\Fun(I,\Abf)$ by defining weak equivalences to be pointwise weak equivalences and defining cofibrations also pointwise (the fibration are then given by certain lifting properties, which we will not discuss on detail). The homotopy limit is defined as the right Quillen adjoint of the constant functor $\Abf\rightarrow  \Fun(I,\Abf)$, which is weakly equivalent to the the limit of a \textit{fibrant} diagram in $\Fun(I,\Abf)$, i.e. for a diagram $a\rightarrow c\leftarrow b$, the homotopy limit is defined as an object that is weakly equivalent to ordinary limit of $a'\rightarrow c'\leftarrow b'$, where $a'\rightarrow c'$ and $b'\rightarrow c'$ are fibrations, $c'$ is fibrant and we have a commutative diagram of the form
	$$
	\begin{tikzcd}
		a\arrow[r,""]\arrow[d,""]& c\arrow[d,""]&b\arrow[d,""]\arrow[l,""]\\
		a'\arrow[r,""]&c'&b'\arrow[l,""],
	\end{tikzcd}
	$$
	where the vertical arrows are weak equivalences (this is analogous to the theory of homotopy pushouts, which can be found in \cite[\S A.2.4]{HTT}). Now let $I_{R}$ be the full subcategory of $I$ generated by the elements $0$ and $2$ and define $I_{L}$ as the full subcategory of $I$ generated by the elements $1$ and $2$. The tuple $(I_{L},I_{R})$ makes $I$ into a Reedy category on $I$ (see \cite[\S A.2.9]{HTT} for more on Reedy categories). As explained in \cite[Proposition A.2.9.19]{HTT} there is a model structure on $\Fun(I,\Abf)$ corresponding to the Reedy structure on $I$ called the Reedy model structure. Important for us is that a diagram $a\rightarrow c\leftarrow b$ is fibrant for the Reedy model structure if $a,b$ are fibrant and either $a\rightarrow c$ or $b\rightarrow c$ is a fibration. Further, as remarked in \cite[Rem. A.2.9.23]{HTT} the Reedy model structure and the injective model structure are Quillen equivalent via the identity functor. Therefore, if we have a fibrant diagram with respect to the Reedy model structure, then the homotopy limit is per definition weak equivalent to the ordinary limit in $\Abf$. Since the homotopy limit in this case is the homotopy pullback, we are done.\par

But that $i$ is a Kan fibration follows from \cite[Cor. 3.1.3.3]{kerodon} and hence we can compute the above pullback in the $\infty$-category $\SS$ via the limit of the underlying simplicial sets. \par 
	Now assume that $Y$ is an arbitrary $\infty$-category. We want to compute the following pullback in $\SS$
	$$
	\begin{tikzcd}
		W\arrow[r,""]\arrow[d,""]& \Fun(\Delta^{1},Y)^{\simeq}\arrow[d,"i"]\\
		X\arrow[r,""]&\Fun(\del\Delta^{1},Y)^{\simeq},
	\end{tikzcd}
	$$ 
	where $i$ is naturally given by applying the functor $(-)^{\simeq}$ to the restriction.
	If $i$ is a Kan fibration, then we can apply the argument above. In general it may not be clear if $i$ is a Kan fibration. But in this case, we have that the natural morphism $F\colon \Fun(\Delta^{1},Y)\rightarrow\Fun(\del\Delta^{1},Y)$ is an isofibration of $\infty$-categories (see \cite[01F3]{kerodon}), meaning that it is an inner fibration on the level of homotopy categories, we have the following property: if $x\in h\Fun(\Delta^{1},Y)$ and we have an isomorphism $u'\colon y\xrightarrow{\sim} F(x)$ then, there exists a $x'\in h\Fun(\Delta^{1},Y)$ with an isomorphism $u\colon x'\rightarrow  x$ such that $F(u) = u'$. In particular, \cite[Prop. 4.4.3.7]{kerodon} implies that $i$ is a Kan fibration.
\end{rem}

\begin{theorem}
	\label{main thm}
	The derived stack
	\begin{align*}
	\PPerf_R\colon \AniAlg{R} &\rightarrow \SS\\
	A&\mapsto (\MMod_A^{\textup{perf}})^{\simeq}
	\end{align*}
	is locally geometric and locally of finite presentation.\par
	To be more specific, we can write $\PPerf_R = \colim_{a\leq b} \PPerf_R^{[a,b]}$, where $\PPerf_R^{[a,b]}$ is the moduli space consisting of perfect modules which have Tor-amplitude concentrated in degree $[a,b]$ and each $\PPerf_R^{[a,b]}$ is $(b-a+1)$-geometric and locally of finite presentation and the inclusion $\PPerf_{R}^{[a,b]}\hookrightarrow \PPerf_{R}$ is a quasi-compact open immersion. If $b-a\leq 1$ then $\PPerf_R^{[a,b]}$ is in fact smooth. 
\end{theorem}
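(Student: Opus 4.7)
The plan is to proceed by induction on $n = b - a \geq 0$, using the filtration given by bounding Tor-amplitude. I would first handle openness: given a point $x\colon\Spec(A)\to \PPerf_R^{[a',b']}$ classifying a perfect $A$-module $P$ with Tor-amplitude in $[a',b']$, the substack $\PPerf_R^{[a,b]}\times_{\PPerf_R^{[a',b']}}\Spec(A)$ consists of those base changes $A\to B$ for which $\pi_i(P\otimes_A B\otimes_B\kappa(\mathfrak{p}))=0$ for all $i\notin[a,b]$ and all $\mathfrak{p}\in\Spec(\pi_0 B)$. Using Remark \ref{tor and type}, this condition is captured by the vanishing of certain perfect complexes constructed from the outside-$[a,b]$ homotopy of $P\otimes_A\pi_0 A$, so by Lemma \ref{equiv zariski open} and Lemma \ref{lem perfect zero} it cuts out a quasi-compact Zariski open substack of $\Spec(A)$. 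This simultaneously proves that $\PPerf_R^{[a,b]}\hookrightarrow\PPerf_R$ is a quasi-compact open immersion and that it suffices to verify $(b-a+1)$-geometricity and local finite presentation for $\PPerf_R^{[a,b]}$.

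For the base case $n=0$, Lemma \ref{general props of Tor}(6) identifies perfect modules of Tor-amplitude $[a,a]$ with shifts $M[a]$ of finite projective modules, so $\PPerf_R^{[a,a]}\simeq\Proj_R\simeq\coprod_n\BGL_{n,R}$, which is $1$-geometric, smooth, and locally of finite presentation by Lemma \ref{proj geometric}. For the inductive step, assuming $\PPerf_R^{[a+1,b]}$ is $n$-geometric and locally of finite presentation, I would build an atlas for $\PPerf_R^{[a,b]}$ by exploiting Lemma \ref{general props of Tor}(7): every perfect $P$ of Tor-amplitude $[a,b]$ sits in a fiber sequence $P\to Q\to M[a+1]$ with $Q$ of Tor-amplitude $[a+1,b]$ and $M$ finite projective. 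Define the auxiliary stack $\mathcal{F}$ classifying triples $(Q,M,\varphi)$ with $Q\in\PPerf_R^{[a+1,b]}$, $M\in\Proj_R$ and $\varphi\colon Q\to M[a+1]$, and let $\Phi\colon\mathcal{F}\to\PPerf_R^{[a,b]}$ send such a triple to $\fib(\varphi)$. Over the $n$-geometric base $\PPerf_R^{[a+1,b]}\times\Proj_R$, the stack $\mathcal{F}$ is the relative mapping stack classified by Lemma \ref{spec sym geometric}; the tautological target $M[a+1]$ has Tor-amplitude in $[a+1,a+1]\subseteq(-\infty,0]$, so this relative stack is smooth, $n$-geometric and locally of finite presentation. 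Composing with the base and using Lemma \ref{geometric comp}, $\mathcal{F}$ is $n$-geometric, smooth over $\PPerf_R^{[a+1,b]}\times\Proj_R$, and locally of finite presentation over $\Spec(R)$. Lemma \ref{general props of Tor}(7) shows $\Phi$ is an effective epimorphism after any affine base change.

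To upgrade this to $(n+1)$-geometricity of $\PPerf_R^{[a,b]}$, I would apply Proposition \ref{diag + proj geometric}: the diagonal of $\PPerf_R^{[a,b]}$ is $(b-a)$-geometric and locally of finite presentation by Lemma \ref{diagonal geometric}, and the two projections $\mathcal{F}\times_{\PPerf_R^{[a,b]}}\mathcal{F}\to\mathcal{F}$ are $n$-geometric and smooth by base change along the diagonal. This forces $\PPerf_R^{[a,b]}$ to be $(n+1)$-geometric and locally of finite presentation. Finally, for smoothness when $b-a\leq 1$, I would identify the cotangent complex at a point $P$: the fiber sequence governing infinitesimal deformations gives $L_{\PPerf_R,P}\simeq (P\otimes_A P^\vee)^\vee[-1]$, hence has Tor-amplitude in $[a-b-1,b-a-1]\subseteq[-n-1,0]$, and Corollary \ref{cotangent implies smooth} concludes.

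The main obstacle I anticipate is the bookkeeping around the atlas $\mathcal{F}\to\PPerf_R^{[a,b]}$: one must verify that the smoothness of the relative mapping stack really descends through the composition with $\PPerf_R^{[a+1,b]}\times\Proj_R$ and that effective epimorphicity of $\Phi$ is preserved after any affine base change, rather than merely on geometric points. Both require that a chosen fiber presentation $P\simeq\fib(Q\to M[a+1])$ can be produced Zariski locally on $\Spec(A)$ from the finite projectivity of $\pi_b P$ and an inductive truncation argument using Lemma \ref{general props of Tor}(7), which is the point where one must be most careful with the hypotheses of Proposition \ref{diag + proj geometric}.
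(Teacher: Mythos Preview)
Your overall architecture matches the paper's: induction on $n=b-a$, the same base case via $\Proj_R$, the same auxiliary stack $\mathcal{F}$ (the paper calls it $U$) of triples $(Q,M,\varphi)$ with $\varphi\colon Q\to M[a+1]$, and the map $\Phi$ taking fibers. However, there is a genuine gap in the core inductive step, namely the smoothness of $\Phi\colon\mathcal{F}\to\PPerf_R^{[a,b]}$.

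First, your claim that $\mathcal{F}$ is smooth over $\PPerf_R^{[a+1,b]}\times\Proj_R$ because ``$M[a+1]$ has Tor-amplitude in $[a+1,a+1]\subseteq(-\infty,0]$'' is incorrect: there is no hypothesis that $a+1\leq 0$, and in any case the relevant module in Lemma~\ref{spec sym geometric} is $Q\otimes M^{\vee}[-a-1]$, which has Tor-amplitude in $[0,n-1]$. By that lemma the fiber map $p$ is affine and locally of finite presentation, but it is smooth only when $n-1\leq 0$. This is exactly why the paper only concludes smoothness of $\PPerf_R^{[a,b]}$ for $b-a\leq 1$ from this part of the argument.

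Second, and more seriously, your appeal to Proposition~\ref{diag + proj geometric} is circular. That proposition requires as input that the projections $\mathcal{F}\times_{\PPerf_R^{[a,b]}}\mathcal{F}\to\mathcal{F}$ are smooth. But each such projection is the base change of $\Phi$ along $\Phi$, so its smoothness is equivalent to smoothness of $\Phi$ itself, which is what you are trying to establish. Knowing that the diagonal of $\PPerf_R^{[a,b]}$ is $(b-a)$-geometric (Lemma~\ref{diagonal geometric}) tells you the projections are $n$-geometric, not that they are smooth. The paper closes this gap by a direct analysis of the fibers of $\Phi$: over a point $P\in\PPerf_R^{[a,b]}(A)$, the fiber $\Phi^{-1}(P)$ decomposes by rank of $M$, and each piece $\Phi^{-1}(P)_m$ is identified with the open locus in the smooth stack $F^{A}_{(P^{\vee})^m[a]}$ where $\pi_a$ of the classifying map is surjective. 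This is the missing idea; your proposal acknowledges in its final paragraph that something delicate happens here, but the actual mechanism you propose (``by base change along the diagonal'') does not work.
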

\begin{proof}
	The proof in the model categorical setting can be found in \cite[Prop. 3.7]{TVaq} and in the spectral setting in \cite[Thm. 5.6]{AG}. The latter follows the former with few changes for readability. We will follow the proof presented in the latter using our setting.\par
	We show that $\PPerf^{[a,b]}$ is $n+1$-geometric, where $n=b-a$, by induction over $n$.\par 
	For $n=0$, we are done, since then we have $\Proj\simeq \PPerf^{[a,a]}$ (see Lemma \ref{general props of Tor}), which is $1$-geometric and locally of finite presentation by Lemma \ref{proj geometric}.\par 
	Now let $n>0$ and assume $\PPerf^{[a+1,b]}$ is $n$-geometric and locally of finite presentation. Let $U$ be defined via the pullback diagram of derived stacks
	$$
	\begin{tikzcd}
	U\arrow[r,""]\arrow[d,"p", swap]&\Fun(\Delta^1,\MMod^{\perf})^{\simeq}\arrow[d,""]\\
	\PPerf^{[a+1,b]}\times_R \PPerf^{[a+1,a+1]}\arrow[r,""]&\Fun(\del\Delta^1,\MMod^{\perf})^{\simeq}.
	\end{tikzcd}
	$$
	Let $\Spec(A)\rightarrow \PPerf^{[a+1,b]}\times_R \PPerf^{[a+1,a+1]}$ be given by $(P,Q)$, where $P$ is a perfect $A$-module of Tor-amplitude $[a+1,b]$ and $Q$ is the $a+1$ shift of a finite projective $A$-module, then $p^*(P,Q)$ classifies morphisms between those, i.e. $p^*(P,Q)\simeq \Spec(\Sym(P\otimes_AQ^\vee))$ (note that $P\otimes_AQ^\vee$ is perfect and has Tor-amplitude in $[0,b-(a+1)]$ and thus is connective (see Lemma \ref{general props of Tor}). Therefore $p$ is $(-1)$-geometric and locally of finite presentation and with Lemma \ref{diagonal geometric} and \ref{geometric comp}, we see that $U$ is $n$-geometric and locally of finite presentation. Note that if $b-a\leq 1$ then $p$ is even smooth and using that $\PPerf^{[a+1,a+1]}$ is smooth, we see that $U$ is smooth.\par
	By sending a morphism to its fiber, we get a morphism of derived stacks $q\colon U\rightarrow \PPerf^{[a,b]}$.
	Using Proposition \ref{diag geometric} with Lemma \ref{diagonal geometric}, we see that $q$ is $n$-geometric, so it suffices to show that $q$ is also smooth and an effective epimorphism.\par 
	That it is an effective epimorphism follows from Lemma \ref{general props of Tor}. To check smoothness let $\Spec(A)\rightarrow \PPerf^{[a,b]}$ be a morphism classified by a perfect $A$-module $P$ with Tor-amplitude in $[a,b]$. Then $q^{-1}(P)(B)$, for some animated $A$-algebra $B$, consists of morphisms of perfect $B$-modules $f\colon Q\rightarrow M[a+1]$ such that $\fib(f)\simeq P\otimes_A B$, where $Q$ has Tor-amplitude in $[a+1,b]$ and $M$ is finite projective. Since locally every finite projective module is free of finite rank, we can decompose 
	$$
	q^{-1}(P)\simeq \coprod_m q^{-1}(P)_m,
	$$
	where $q^{-1}(P)_m$  is the substack of $q^{-1}(P)$, where the classified morphisms have codomain given by the $a+1$ shift of free modules of rank $m$. The stack $q^{-1}(P)_m$ is equivalent to the stack classifying morphisms $ A^m[a]\rightarrow P$, where the cofiber has Tor-amplitude in $[a+1,b]$, which is equivalent to $A^m[a]\rightarrow P$ beeing a surjection on $\pi_a$.
	\par 
	To see the equivalence of stacks let us look at $q^{-1}(P)_m(B)$. These are all morphisms $f\colon Q\rightarrow B^m[a+1]$ such that $\fib(f)\simeq P\otimes_A B$, again $Q$ is a perfect $B$-module with Tor-amplitude in $[a+1,b]$. Since $\MMod_A$ is stable, we see that $P\otimes_AB\rightarrow Q\rightarrow B^{m}[a+1]$ is a fiber diagram if and only if it is a cofiber diagram and thus after shift we see that $q^{-1}(P)_m(B)$ consists of morphisms $g\colon B^{m}[a]\rightarrow P\otimes_AB$ such that its cofiber $\cofib(g)$ has Tor-amplitude in $[a+1,b]$. By Lemma \ref{general props of Tor} $\cofib(g)$ has Tor-amplitude in $[a,b]$. Since after tensoring $M^m[a]\rightarrow P\otimes_A M\rightarrow \cofib(g)\otimes_B M$, where $M$ is a discrete $\pi_0B$-module, we still have a cofiber sequence it is enough to check that $\pi_a(g\otimes\id_M)$ is a surjection. But since $\pi_a(P\otimes_AM) = \pi_a(P) \otimes_{\pi_0A} M$ (use the degeneracy of the Tor-spectral sequence at $(0,a)$) and the ordinary tensor product of $\pi_0A$-modules preserves surjections it is enough to check that $\pi_ag$ is surjective. And therefore $q^{-1}(P)_m(B)$ consists of morphisms $B^m[a]\rightarrow P\otimes_A B$, which are surjective on $\pi_a$ (obviously any morphism $B^m[a]\rightarrow P\otimes_A B$ with cofiber having Tor-amplitude in $[a+1,b]$ is surjective on $\pi_a$).\par 	
	By this characterization the stack $q^{-1}(P)_m$ is an open substack of $F^{A}_{(P^{\vee})^{m}[a]}$ (see Lemma \ref{spec sym geometric} for notation).\par
	To see this, let $\Spec(B)\rightarrow F_{(P^{\vee})^{m}[a]}$ be given by a morphism $\xi\colon B^m[a]\rightarrow P\otimes_A B$. Let $Z$ be the pullback of $\Spec(B)$ along the inclusion $q^{-1}(P)_m\hookrightarrow F_{(P^{\vee})^{m}[a]}$. In particular, for any animated $A$-algebra $C$, we have that $Z(C)$ consists of those morphisms $f\colon B\rightarrow C$, such that $\pi_af^\ast\xi$ is surjective.
	Since $P\otimes_A B$ is perfect and has Tor-amplitude in $[a,b]$ its homotopy group $\pi_a(P\otimes_A B)$ is finitely presented (see \cite[Cor. 7.2.4.5]{HA}). Therefore, being surjective is an open condition on $\pi_0B$ (see \cite[Prop. 8.4]{WED}). Further refining by principal affine opens $D(f_i)\subseteq\Spec(\pi_0B)$, we get an open substack $\bigcup \Spec(B[f_i^{-1}])$ of $\Spec(B)$. Now a morphism $u\colon B\rightarrow C$ is in $Z(C)$ if and only if \'etale locally there is an $i$ such that $\pi_0u(f_i)$ is invertible. Therefore, $Z\simeq \bigcup\Spec(B[f_i^{-1}])$.\par 
	Since $q^{-1}(P)_m$ is open in $F^{A}_{(P^{\vee})^{m}[a]}$, which itself is smooth by Lemma \ref{spec sym geometric}, we see that $q^{-1}(P)$ is smooth over $A$, which concludes the proof.
\par 
Indeed, let $P$ be a perfect $A$-module with Tor-amplitude in $[a,b]$. Then, by Lemma \ref{general props of Tor}, we can find a cofiber sequence $M[a]\rightarrow P\rightarrow Q$, where $Q$ is perfect of Tor-amplitude $[a+1,b]$ and $M$ is finite projective. Analogous to the above, we see that $P$ has Tor-amplitude in $[a+1,b]$.\par 
For the open immersion part it suffices by induction to show that for all $a< b\in \ZZ$ the inclusion $\PPerf_{R}^{[a+1,b]}\hookrightarrow \PPerf_{R}^{[a,b]}$ is an open immersion. Let $A\in\AniAlg{R}$ and $\Spec(A)\rightarrow\PPerf_{R}^{[a,b]}$ be a morphism classified by a perfect $A$-module $P$ of Tor-amplitude $[a,b]$. By Lemma \ref{general props of Tor}, we have a fiber sequence of $A$-modules $P\rightarrow M[a+1]\rightarrow Q$, where $Q$ is perfect of Tor-amplitude in $[a+1,b]$ and $M$ is finite projective. Now $P$ has Tor-amplitude in $[a+1,b]$ if and only if $M\simeq 0$.  But by Lemma \ref{equiv zariski open}, we see that the vanishing locus of $M$ is a quasi-compact open in $\Spec(A)$, which concludes the proof.
\end{proof}

\begin{cor}
	\label{cotangent of perf}
	Let $A$ be an animated $R$-algebra and let $\Spec(A)\rightarrow \PPerf_R$ be a morphism given by a perfect $A$-module $P$. The cotangent complex $L_{\PPerf_R,A}$ is perfect and if $A/R$ is \'etale then the cotangent complex a that point is given by
	$$
		L_{\PPerf_R,A}\simeq (P\otimes_A P^{\vee})^{\vee}[-1].
	$$
\end{cor}
\begin{proof}
	This is analogous to \cite[Cor. 5.9]{AG}, but for the convenience of the reader we recall the proof.\par 
	The first assertion follows from Theorem \ref{main thm} with Corollary \ref{global lfp cotangent}.\par
	For the second assertion let $\Omega_P\PPerf_R$ denote the loop of $\PPerf_R$ along $\Spec(A)\rightarrow \PPerf_R$ classified by a perfect $A$-module $P$, i.e. the $\Omega_P\PPerf_R\coloneqq \Spec(A)\times_{\PPerf_{R}}\Spec(A)$. We know per definition of the cotangent complex that we have a pullback diagram
	$$
	\begin{tikzcd}
	L_{\PPerf,A} \arrow[r]\arrow[d] & L_{A/R}\arrow[d]\\
	L_{A/R} \arrow[r]&L_{\Omega_P\PPerf_R,*},
	\end{tikzcd}
	$$ 
	where $*$ is the point corresponding to the canonical map $\Spec(A)\rightarrow \Omega_P\PPerf_R$.\par
	Let $T\in \AniAlg{A}$, then $\Omega_P\PPerf_R(T)\simeq \Equiv_T(P\otimes_A T)$, where $   \Equiv_T(P\otimes_A T)$ denotes the $T$-automorphisms of $P\otimes_A T$. In particular $\Omega_P\PPerf$ is an open substack of $$\Hom_A((P\otimes_A P^{\vee})^\vee,-)$$ (after using adjunctions). By Lemma \ref{spec sym geometric}, we now have $$ L_{\Omega_P\PPerf_R,\ast}\simeq (P\otimes_A P^{\vee})^{\vee}.$$\par 
	Therefore, if $A/R$ is \'etale, we have $\Sigma L_{\PPerf_R,A}\simeq L_{\Omega_P\PPerf_R,*}$, whe finishing the proof.
\end{proof}

\bibliographystyle{alpha}
\bibliography{NotesDAG}

\begin{thebibliography}{MLM94}

\bibitem[AG12]{ArinkinGaits}
Dima Arinkin and Dennis Gaitsgory.
\newblock Singular support of coherent sheaves, and the geometric langlands
  conjecture, 2012.

\bibitem[AG14]{AG}
Benjamin Antieau and David Gepner.
\newblock Brauer groups and \'{e}tale cohomology in derived algebraic geometry.
\newblock {\em Geom. Topol.}, 18(2):1149--1244, 2014.

\bibitem[Ann18]{Annala}
Toni Annala.
\newblock Bivariant derived algebraic cobordism, 2018.

\bibitem[Avr99]{Avramov}
Luchezar~L. Avramov.
\newblock Locally complete intersection homomorphisms and a conjecture of
  {Q}uillen on the vanishing of cotangent homology.
\newblock {\em Ann. of Math. (2)}, 150(2):455--487, 1999.

\bibitem[CS21]{CS}
Kestutis Cesnavicius and Peter Scholze.
\newblock Purity for flat cohomology, 2021.

\bibitem[GR17]{GR}
Dennis Gaitsgory and Nick Rozenblyum.
\newblock {\em A study in derived algebraic geometry. {V}ol. {I}.
  {C}orrespondences and duality}, volume 221 of {\em Mathematical Surveys and
  Monographs}.
\newblock American Mathematical Society, Providence, RI, 2017.

\bibitem[GW10]{WED}
Ulrich G\"{o}rtz and Torsten Wedhorn.
\newblock {\em Algebraic geometry {I}}.
\newblock Advanced Lectures in Mathematics. Vieweg + Teubner, Wiesbaden, 2010.
\newblock Schemes with examples and exercises.

\bibitem[HSS00]{SymSpec}
Mark Hovey, Brooke Shipley, and Jeff Smith.
\newblock Symmetric spectra.
\newblock {\em J. Amer. Math. Soc.}, 13(1):149--208, 2000.

\bibitem[Kha18]{Khan}
Adeel Khan.
\newblock Lecture notes: Descent in algebraic k-theory.
\newblock \url{https://www.preschema.com/lecture-notes/kdescent/}, WS
  2017/2018.

\bibitem[Lur04]{DAG}
Jacob Lurie.
\newblock {\em Derived algebraic geometry}.
\newblock ProQuest LLC, Ann Arbor, MI, 2004.
\newblock Thesis (Ph.D.)--Massachusetts Institute of Technology.

\bibitem[Lur09]{HTT}
Jacob Lurie.
\newblock {\em Higher topos theory}, volume 170 of {\em Annals of Mathematics
  Studies}.
\newblock Princeton University Press, Princeton, NJ, 2009.

\bibitem[Lur17]{HA}
Jacob Lurie.
\newblock Higher algebra.
\newblock \url{https://www.math.ias.edu/~lurie/papers/HA.pdf}, 2017.

\bibitem[Lur18]{SAG}
Jacob Lurie.
\newblock Spectral algebraic geometry.
\newblock \url{https://www.math.ias.edu/~lurie/papers/SAG-rootfile.pdf}, 2018.

\bibitem[Lur21]{kerodon}
Jacob Lurie.
\newblock Kerodon.
\newblock \url{https://kerodon.net}, 2021.

\bibitem[MLM94]{MLM}
Saunders Mac~Lane and Ieke Moerdijk.
\newblock {\em Sheaves in geometry and logic}.
\newblock Universitext. Springer-Verlag, New York, 1994.
\newblock A first introduction to topos theory, Corrected reprint of the 1992
  edition.

\bibitem[PV15]{PV}
Mauro Porta and Gabriele Vezzosi.
\newblock Infinitesimal and square-zero extensions of simplicial algebras,
  2015.
\newblock arXiv:1310.3573.

\bibitem[Qui67]{Quillen}
Daniel~G. Quillen.
\newblock {\em Homotopical algebra}.
\newblock Lecture Notes in Mathematics, No. 43. Springer-Verlag, Berlin-New
  York, 1967.

\bibitem[Sim96]{Simpson}
Carlos Simpson.
\newblock Algebraic (geometric) $n$-stacks, 1996.
\newblock arXiv:alg-geom/9609014.

\bibitem[SS03]{SS}
Stefan Schwede and Brooke Shipley.
\newblock Stable model categories are categories of modules.
\newblock {\em Topology}, 42(1):103--153, 2003.

\bibitem[{Sta}19]{stacks-project}
The {Stacks Project Authors}.
\newblock \textit{Stacks Project}.
\newblock \url{https://stacks.math.columbia.edu}, 2019.

\bibitem[TV07]{TVaq}
Bertrand To\"en and Michel Vaqui\'e.
\newblock Moduli of objects in dg-categories.
\newblock {\em Annales scientifiques de l'\'Ecole Normale Sup\'erieure}, Ser.
  4, 40(3):387--444, 2007.

\bibitem[TV08]{TV2}
Bertrand To\"{e}n and Gabriele Vezzosi.
\newblock Homotopical algebraic geometry. {II}. {G}eometric stacks and
  applications.
\newblock {\em Mem. Amer. Math. Soc.}, 193(902):x+224, 2008.

\bibitem[Wil69]{universe}
N.~H. Williams.
\newblock On grothendieck universes.
\newblock {\em Compositio Mathematica}, 21(1):1--3, 1969.

\bibitem[Yay22a]{yaylali}
Can Yaylali.
\newblock Derived {$F$}-zips, 2022.

\bibitem[Yay22b]{thesis}
Can Yaylali.
\newblock {\em Derived $F$-zips}.
\newblock PhD thesis, Technische Universit{\"a}t Darmstadt, Darmstadt, 2022.

\end{thebibliography}

\end{document}